\documentclass[reqno]{amsart}


\usepackage{amsmath}
\usepackage{amssymb}
\usepackage{verbatim}
\usepackage{dsfont}
\usepackage{enumerate}
\usepackage[all]{xy}
\usepackage[mathscr]{eucal}

\usepackage[usenames]{color}

\usepackage{amsthm}
\usepackage{stmaryrd}
\usepackage{amscd}
\usepackage{amsfonts}
\usepackage{latexsym}

\usepackage{amscd}

\usepackage{graphicx}

\usepackage{setspace}

\usepackage{wasysym}

\usepackage[usenames,dvipsnames]{xcolor}

\usepackage{todonotes}

\makeatletter

\def\bd#1{\text{\boldmath${#1}$}}
\newcommand{\jonnytodo}{\todo[inline,color=blue!20]}

\newtheorem{theorem}{Theorem}[section]
\newtheorem{lemma}[theorem]{Lemma}

\newtheorem{corollary}[theorem]{Corollary} 
\theoremstyle{definition}

\newtheorem{remark}[theorem]{Remark}

\@addtoreset{equation}{section}

\newcommand{\End}{\operatorname{End}}
 
\newcommand{\Hom}{\operatorname{Hom}} 
 
\newcommand{\Rep}{\operatorname{Rep}}

\newcommand{\g}{\mathfrak{g}}
\newcommand{\Z}{\mathbb{Z}}

\newcommand{\C}{\mathbb{C}}
\newcommand{\K}{\mathbb{K}}

\newcommand{\cat}{\mathcal}
\newcommand{\ep}{\varepsilon}

\newcommand{\up}{\uparrow}
\newcommand{\down}{\downarrow}
\newcommand{\ob}[1]{\mathsf{#1}}
\newcommand{\wrd}{\langle\up,\down\rangle}
\newcommand{\Rib}{\mathcal{RIB}}
\newcommand{\gr}{\operatorname{gr}}
\newcommand{\grAOB}{\operatorname{gr}\mathcal{AOB}}
\newcommand{\grOB}{\operatorname{gr}\mathcal{OB}}
\newcommand{\unit}{\mathds{1}}

\newcommand{\col}{\operatorname{col}}

\newcommand{\AOB}{\mathcal{AOB}}
\newcommand{\GOB}{\mathcal{GOB}}
\newcommand{\COB}{\mathcal{OB}}
\newcommand{\TOB}{\mathcal{GOB}}
\newcommand{\OB}{\mathcal{OB}}

\renewcommand{\k}{\Bbbk}

\hyphenation{theo-re-ti-cal group-theo-re-ti-cal
semi-sim-ple al-geb-ras di-men-sions sim-ple ob-jects
equi-va-lent pro-per-ties ca-te-go-ries ques-tion mo-dule
e-print auto-equi-valence equi-va-ri-an-ti-za-tion}

\allowdisplaybreaks

\begin{document}

\title[Oriented Brauer categories]{A basis theorem for the affine oriented Brauer category and its cyclotomic quotients}

\author[Brundan]{Jonathan Brundan}
\address{J.B.: Department of Mathematics,
University of Oregon, Eugene, OR 97403, USA}
\email{brundan@uoregon.edu}

\author[Comes]{Jonathan Comes}
\address{J.C.: Department of Mathematics,
University of Oregon, Eugene, OR 97403, USA}
\email{jcomes@uoregon.edu}

\author[Nash]{David Nash}
\address{D.N.: Department of Mathematics and Computer Science, Le Moyne College, Syracuse, NY 13214, USA}
\email{nashd@lemoyne.edu}

\author[Reynolds]{Andrew Reynolds}
\address{A.R.: Department of Mathematics,
University of Oregon, Eugene, OR 97403, USA}
\email{asr@uoregon.edu}

\thanks{2010 {\it Mathematics Subject Classification}: 17B10, 18D10.}
\thanks{First author
supported in part by NSF grant DMS-1161094.}

\begin{abstract}
The affine oriented Brauer category
is a monoidal category obtained from the 
oriented Brauer category ($=$ the free symmetric monoidal category generated by a single object and its dual) by adjoining a polynomial generator subject to appropriate relations. 
In this article, we prove a basis theorem for the morphism spaces in this category, as well as for all of its cyclotomic quotients.
\end{abstract}

\maketitle


\section{Introduction}

Throughout the article, $\k$ denotes a fixed ground ring which we assume is an integral domain, and
all categories and functors 
will be assumed to be $\k$-linear.
We are going to define and study various categories
$\OB$, $\AOB$, and $\COB^f$, which we call the {\em oriented Brauer category},
the {\em affine oriented Brauer category}, and the {\em cyclotomic oriented Brauer category} associated to a monic polynomial $f(u) \in \k[u]$ of degree $\ell$.
The first of these, $\OB$, is
the free 
symmetric monoidal category generated by a single object $\uparrow$ and its dual 
$\downarrow$. Then $\AOB$ is the (no longer symmetric) monoidal category obtained from $\OB$ by adjoining an endomorphism $x:\up \rightarrow \up$ subject to relations similar to those satisfied by the polynomial generators of the degenerate affine Hecke algebra.
Finally $\COB^f$ is the (no longer monoidal) category obtained from $\AOB$ by factoring out the right tensor ideal generated by $f(x)$.

In our setup, the categories $\OB, \AOB$, and $\COB^f$ 
come equipped with some algebraically independent parameters:
one parameter $\Delta$ for $\OB$, 
infinitely many parameters $\Delta_1,\Delta_2,\dots$ for $\AOB$, 
and $\ell$ parameters $\Delta_1,\dots,\Delta_\ell$ for $\COB^f$.
On evaluating these parameters at 
scalars in the ground ring $\k$,
we obtain also various specializations $\OB(\delta)$,
$\AOB(\delta_1,\delta_2,\dots)$, and $\COB^f(\delta_1,\dots,\delta_\ell)$.
In particular, $\OB(\delta)$ is the symmetric monoidal
category denoted
$\underline{\operatorname{Re}}\!\operatorname{p}_0(GL_\delta)$ in
\cite[$\S$3.2]{CW},
which is the ``skeleton'' of Deligne's category 
$\underline{\operatorname{Re}}\!\operatorname{p}(GL_\delta)$.
The other two specialized categories $\AOB(\delta_1,\delta_2,\dots)$ and
$\COB^f(\delta_1,\dots,\delta_\ell)$ are not monoidal, but they are both right
module categories over $\AOB$.

The endomorphism algebras of objects in our various specialized categories have already appeared elsewhere in the literature.
To start with,
writing $\up^r \down^s$ for the tensor product of $r$ copies of $\up$ and $s$ copies of $\down$, 
\begin{equation}
B_{r,s}(\delta) := \End_{\OB(\delta)}(\up^r \down^s)
\end{equation}
is the well-known {\em walled Brauer algebra}
which was introduced independently by Turaev \cite{Tur89} and Koike [Ko] in the late 1980s; see e.g. \cite{BS}.
By analogy with this, we define 
the {\em affine} and {\em cyclotomic walled Brauer algebras} to be the endomorphism algebras
\begin{align}\label{AB}
AB_{r,s}(\delta_1,\delta_2,\dots) &:= \End_{\AOB(\delta_1,\delta_2,\dots)}(\up^r \down^s),\\
B^f_{r,s}(\delta_1,\dots,\delta_\ell) &:=\End_{\COB^f(\delta_1,\dots,\delta_\ell)}(\up^r \down^s).\label{Brs}
\end{align}
In the last subsection of the article, we will explain how our affine walled Brauer algebra is isomorphic to the algebra with the same name defined by generators and (twenty-six!) relations by Rui and Su \cite{RS}; see also \cite{Sartori}.
Similarly our cyclotomic walled Brauer algebras
are isomorphic to the ones introduced in \cite{RS2}. 

The main goal of the article is to prove various diagrammatic basis theorems for the morphism spaces in the categories $\AOB$ and $\COB^f$.
Rui and Su also prove basis theorems for their algebras in \cite{RS, RS2}.
The tricky step in the proofs of all of
these results is to establish the linear independence.
It turns out that the linear independence in \cite{RS, RS2} can be deduced quite easily from the basis theorems proved in the present paper. On the other hand, it does not seem to be easy to deduce our results from those of \cite{RS,RS2}. In fact we found it necessary to adopt a completely different approach.

In subsequent work \cite{BR}, the first and last authors will
investigate the representation theory of the cyclotomic quotients
$\COB^f(\delta_1,\dots,\delta_\ell)$,
showing for suitably chosen parameters 
with $\k = \C$ that they give rise to tensor product categorifications of 
integrable lowest and highest weight representations of level $\ell$ for the Lie algebra $\mathfrak{sl}_\infty$.
We expect that these categories
are closely related to the categorifications of tensor products of lowest and highest weight representations introduced by Webster in \cite{Webster}.

In the remainder of the introduction, we are going to explain in detail the definitions of all of these categories, then formulate our main results precisely.

\subsection*{\boldmath The category $\OB$}
Let $\wrd$ denote the set of all words in the
alphabet $\{\uparrow, \downarrow\}$, including the empty word $\varnothing$.
Given two words $\ob a = \ob a_1\cdots \ob a_k, \ob b = \ob b_1 \cdots \ob b_l
\in\wrd$, an {\em oriented Brauer diagram} of type $\ob a \rightarrow \ob b$ is a
diagrammatic representation of a
 bijection 
$$
\{i\:|\:\ob a_i = \up\} \sqcup \{i'\:|\:\ob b_i = \down\}
\stackrel{\sim}{\rightarrow} 
\{i\:|\:\ob b_i = \up\}\sqcup\{i'\:|\:\ob a_i = \down\}
$$
obtained by placing the word $\ob a$ below
the word $\ob b$, then drawing strands connecting pairs of letters as prescribed by the given bijection. 
The arrows that are the 
letters of $\ob a$ and $\ob b$ then give a consistent orientation to each strand in the diagram.
For example, 
$$\includegraphics{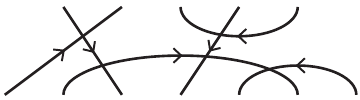}$$ is an oriented Brauer diagram of type 
$\up\up\down\down\down\down\up\to\down\up\up\down\down$. 
We say that two oriented Brauer diagrams are {\em equivalent}
if they are of the same type and represent the same bijection.
In diagrammatic terms, this means that one diagram can be obtained from the other by continuously deforming its strands, possibly moving them through
other strands and crossings, but keeping
endpoints fixed.

Given oriented Brauer diagrams of types $\ob b \rightarrow \ob c$ and
$\ob a \rightarrow \ob b$, we can stack the first on top of the second to 
obtain an oriented Brauer diagram of type $\ob a\to\ob c$ along with finitely many loops made up of strands which were connected only to letters in $\ob b$, which we call \emph{bubbles}. For example, if we stack the two diagrams 
$$
\includegraphics{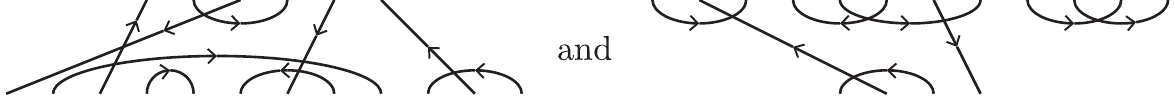}
$$ 
we obtain the following {\em oriented Brauer diagram with bubbles}:
$$\includegraphics{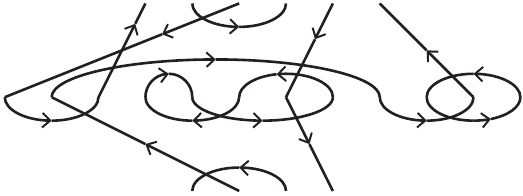}$$
Two oriented Brauer diagrams with bubbles are {\em equivalent}
if they have the same number of bubbles
(regardless of orientation),
and the underlying oriented Brauer diagrams obtained by ignoring the bubbles are equivalent in the earlier sense; again this can be viewed in terms of continuously deforming strands through other strands and crossings.
For example, the oriented Brauer diagram with bubbles pictured above
is equivalent to the following one:  
$$\includegraphics{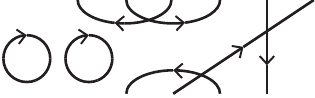}$$

Now we can define the {\em oriented Brauer category} $\OB$ to be the
category with objects
$\wrd$ and
morphisms $\Hom_{\OB}(\ob a, \ob b)$ consisting of all formal
$\k$-linear combinations of equivalence classes of
oriented Brauer diagrams with bubbles of
type $\ob
a \rightarrow \ob b$.
The composition $g \circ h$ 
of diagrams is by vertically stacking $g$ on top of $h$ as illustrated 
above; it is easy to see that this is associative.
There is also a well-defined tensor product making $\OB$ into a (strict) 
monoidal category.
This is 
defined on diagrams so that $g \otimes h$ is obtained by horizontally stacking $g$
to the left of $h$; often we will denote $g \otimes h$ simply by $gh$.
There is an obvious braiding $\sigma$ making $\OB$ into a symmetric monoidal category,
which is defined
by setting $$\includegraphics{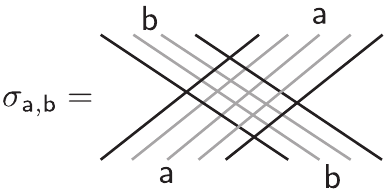}$$ for each $\ob{a,b}\in\wrd$. 
Finally $\OB$ is rigid, with the dual $\ob a^*$ of $\ob a$
being the word obtained by rotating $\ob a$ through $180^\circ$:
$$\includegraphics{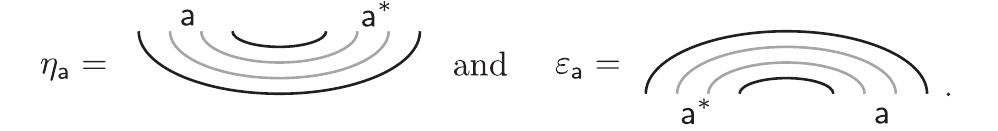}$$
We refer to Section 2 for a brief review of these basic notions.

The monoidal category $\OB$ can also be defined by generators and
relations.
To explain this, 
let $c:\varnothing\to\up\down$ (``create''), $d:\down\up\to\varnothing$
(``destroy"), and $s:\up\up\to\up\up$ denote the following oriented Brauer diagrams: 
\begin{equation*}\label{c d t}\includegraphics{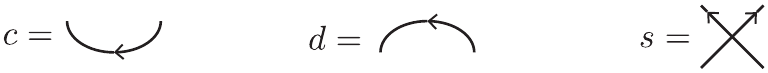}\end{equation*}
It is easy to check that these satisfy the following fundamental relations:
\begin{align}\label{A down}
(\up d)\circ(c\up)&=\up,\\
\label{A up}
(d\down)\circ(\down c)&=\down,\\
\label{S}
s^2&=\up\up,\\
\label{B}
(\up s)\circ(s\up)\circ(\up s)&=(s\up)\circ(\up s)\circ (s\up),\\
\label{I}
(d\up\down)\circ(\down s\down)\circ(\down\up c)&\text{ is
  invertible.}
 \end{align}
The last of these relations is really the assertion that there is
another distinguished generator $t:\up \down \rightarrow \down \up$ that is a
two-sided inverse to $(d\up\down)\circ(\down s\down)\circ(\down\up c)$, as illustrated below:
\begin{equation*}\includegraphics{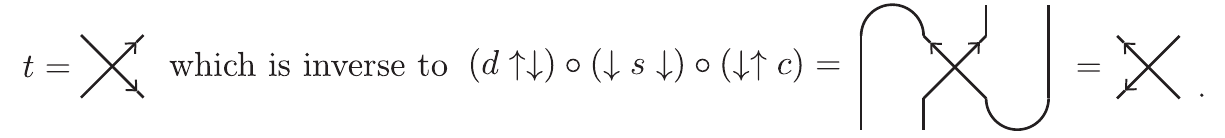}
\end{equation*}
The following theorem is an almost immediate consequence of a general
result of Turaev from \cite{Tur} which gives a presentation for the category of ribbon
tangles.

\begin{theorem}\label{thm1}
As a $\k$-linear  monoidal category,
  $\OB$ is generated by the objects $\up,\down$ and morphisms $c,
  d, s$
  subject only to the  relations (\ref{A down})--(\ref{I}).
\end{theorem}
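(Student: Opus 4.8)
The plan is to verify directly that the evident comparison functor is an isomorphism of categories. Let $\mathcal{C}$ denote the $\k$-linear monoidal category presented by the objects $\up,\down$, the generating morphisms $c,d,s$, and the relations (\ref{A down})--(\ref{I}). Since the diagrams $c,d,s\in\OB$ have already been observed to satisfy (\ref{A down})--(\ref{I}), there is a unique $\k$-linear monoidal functor $F\colon\mathcal{C}\to\OB$ fixing $\up,\down$ and sending each generator to the diagram of the same name. On objects $F$ is a bijection (both object monoids are the free monoid $\wrd$), so it is enough to show that $F$ is full and faithful.

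Fullness is a standard generic-position argument. Given an oriented Brauer diagram with bubbles of type $\ob a\to\ob b$, isotope it so that its height function is Morse with all critical points (local maxima, local minima, crossings) at distinct heights, each bubble occupying its own horizontal slab; cutting along non-critical heights then exhibits the diagram as a composite of morphisms each of the form $\id_{\ob c}\otimes\gamma\otimes\id_{\ob d}$ with $\gamma\in\{c,d,s,t\}$, where $t\colon\up\down\to\down\up$ is the mixed crossing. Hence $\OB$ is generated as a monoidal category by $c,d,s,t$. But relation (\ref{I}) is exactly the statement that $(d\up\down)\circ(\down s\down)\circ(\down\up c)$ has a two-sided inverse, so $t$ lies in the image of $F$; therefore $F$ is full.

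Faithfulness is the substantive point, and this is where Turaev's theorem enters. Let $\Rib$ be the category of oriented ribbon tangles, for which \cite{Tur} provides a presentation: the generators are the objects $\up,\down$, the positive and negative crossings $\beta^{+},\beta^{-}$, and a cup and a cap, and the relations are the oriented Reidemeister-type moves together with the snake/zigzag identities and the relation encoding the framing. Flattening a ribbon tangle — forgetting over/under information at crossings and the framing — defines a monoidal functor $\Phi\colon\Rib\to\OB$, full by the same slicing argument as above. One checks that the kernel congruence of $\Phi$ is generated by the single relation $\beta^{+}=\beta^{-}$: imposing it makes the braiding symmetric and forces the twist to be trivial, and two ribbon tangles have equal flattening precisely when they become equal modulo $\beta^{+}=\beta^{-}$, which matches the equivalence of oriented Brauer diagrams (isotopy allowing strands to pass through other strands and crossings). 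Thus $\OB\cong\Rib/\langle\beta^{+}=\beta^{-}\rangle$, and it remains to identify this quotient with $\mathcal{C}$. In $\Rib/\langle\beta^{+}=\beta^{-}\rangle$ the two crossings become a single $s\colon\up\up\to\up\up$, all mixed-orientation crossings are expressible through $c,d,s$, and under this identification $\beta^{+}\beta^{-}=\id$ becomes (\ref{S}), the zigzag identities become (\ref{A down})--(\ref{A up}), Reidemeister III becomes the braid relation (\ref{B}), Reidemeister II for the mixed-orientation crossing becomes the invertibility statement (\ref{I}), and the remaining Reidemeister I/framing relations and the defining relations for the mixed crossings are all consequences of (\ref{A down})--(\ref{I}).

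The main obstacle is precisely this last bookkeeping: confirming that the short list (\ref{A down})--(\ref{I}) is a \emph{complete} set of relations, i.e. that nothing in Turaev's longer, orientation-decorated list is lost upon translation, and that the kernel congruence of $\Phi$ really is generated by $\beta^{+}=\beta^{-}$. Both are routine (if slightly tedious) diagram computations given the presentation of $\Rib$, and together they show that $F$ is faithful, hence an isomorphism, which proves Theorem \ref{thm1}.
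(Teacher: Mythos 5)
Your overall strategy is the same as the paper's: reduce Theorem~\ref{thm1} to Turaev's presentation of ribbon tangles by flattening, and then argue that the remaining relations collapse to (\ref{A down})--(\ref{I}). Where you diverge is in how much is actually carried out. You identify correctly that the crux is confirming nothing in Turaev's list survives beyond (\ref{A down})--(\ref{I}), but you then declare this to be ``routine (if slightly tedious) diagram computations'' and stop. That is precisely the place where the proof has content, and the paper does not leave it at that: after restricting Turaev's presentation to one band color and imposing the flattening relations, the paper finds that exactly one extra relation survives, namely
$$(d\up)\circ(\down s)\circ(t\up)\circ(c\up)=\up,$$
a flattened Reidemeister~I-type move, and it then shows by a short explicit chain of equalities (using only (\ref{A down}) and the defining property of $t$ as the two-sided inverse from (\ref{I})) that this relation is a consequence of the others. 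Without that identification and derivation your argument does not actually establish faithfulness; the claim that the Reidemeister I / framing relations ``are all consequences of (\ref{A down})--(\ref{I})'' is precisely what needs to be proved, and it is not obvious --- indeed the paper flags this kind of derivation as nontrivial later, when remarking that the ``primed'' relations (\ref{R1})--(\ref{R2}) are not easy to derive by hand from (\ref{A down})--(\ref{I}).

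A secondary point: you reformulate the argument as ``$\OB \cong \Rib/\langle\beta^+=\beta^-\rangle$,'' asserting that the kernel congruence of the flattening functor is generated by the single relation $\beta^+=\beta^-$. This is plausible but also not self-evident --- one has to argue that identifying the two crossings automatically kills twists and collapses the four orientation-types of crossings correctly. The paper sidesteps this by not factoring through an intermediate quotient at all: it simply takes Turaev's explicit list of relations, adds the forgetting relations directly, and tracks what remains. That is both shorter and avoids the extra claim you would have to justify.
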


The endomorphism 
algebra $\End_{\OB}(\varnothing)$ is the polynomial algebra $\k[\Delta]$
generated by a single bubble $\Delta$.
Moreover we have that $\Delta \otimes g = g \otimes \Delta$ for each morphism $g$ in $\OB$.
Hence we can view $\OB$ as a $\k[\Delta]$-linear monoidal category so that
$\Delta$ acts on morphism $g$ 
by $\Delta g := \Delta
\otimes g$.
Then, given a scalar $\delta \in \k$, we let $\OB(\delta)$ denote the
symmetric monoidal category obtained from $\OB$ by specializing $\Delta$
at $\delta$, i.e. $\OB(\delta) := \k \otimes_{\k[\Delta]} \OB$ viewing
$\k$ as a $\k[\Delta]$-module so that $\Delta$ acts as multiplication
by $\delta$.
Equivalently, in terms of generators and relations, 
$\OB(\delta)$ 
is the $\k$-linear monoidal category 
obtained from $\OB$ by imposing the additional relation
\begin{equation}
d \circ t \circ c = \delta,
\end{equation}
i.e. we require that the 
object $\uparrow$ has dimension $\delta$.
Each morphism space $\Hom_{\OB}(\ob a, \ob b)$ in $\OB$ is free as a
$\k[\Delta]$-module with basis given by the equivalence classes of oriented Brauer diagrams
$\ob a \rightarrow \ob b$ (now with no bubbles).
Hence the category $\OB(\delta)$ has objects
$\wrd$ and its
morphisms $\Hom_{\OB(\delta)}(\ob a, \ob b)$ are formal $\k$-linear
combinations of equivalence classes of oriented
Brauer diagrams of type $\ob a \rightarrow \ob b$. The 
composition $g \circ h$ of two diagrams in $\OB(\delta)$ is defined first by vertically
stacking $g$ on top of $h$, then removing all bubbles 
and multiplying by the scalar $\delta^n$, where $n$ is the number of bubbles
removed.

\subsection*{\boldmath The category $\AOB$}
The {\em affine oriented Brauer category} $\AOB$ is the monoidal category
generated by objects $\up, \down$ and morphisms $c,d,s, x$
subject to the relations (\ref{A down})--(\ref{I}) plus one extra relation
\begin{equation}\label{AX}
(\up x)\circ s=s\circ(x\up)+\up\up.
\end{equation}
Before we discuss the diagrammatic nature of this category, let us explain 
why we became interested in it in the first place. 
Let $\mathfrak{g}$ be the general linear Lie algebra $\mathfrak{gl}_n(\k)$ with natural module $V$.
Let $\g\operatorname{-mod}$ be the category of all $\g$-modules
and
$\End(\g\operatorname{-mod})$ be the monoidal category of endofunctors of
$\g\operatorname{-mod}$, so
for functors $F,G, F', G'$ and
natural transformations $\eta:F \rightarrow F'$, $\xi:G \rightarrow G'$
 we have that
$F \otimes G := F \circ G$ and $\eta \otimes \xi := \eta  \xi:F \circ G \rightarrow F' \circ G'$.
Finally let $\End(\g\operatorname{-mod})^{\operatorname{rev}}$
denote the same category but viewed as a monoidal category with the opposite
tensor product.
Then the point is that there is a monoidal functor 
\begin{equation}\label{F}
R:\AOB \rightarrow
\End(\g\operatorname{-mod})^{\operatorname{rev}}
\end{equation}
sending the objects $\up$ and $\down$
to the endofunctors $- \otimes V$ and $- \otimes V^*$, respectively,
and defined on
the generating morphisms by
\begin{align*}
R(c)&:\operatorname{Id} \rightarrow - \otimes V \otimes V^*, \quad&u &\mapsto u \otimes \omega,\\
R(d)&:- \otimes V^* \otimes V \rightarrow \operatorname{Id},
\quad&
u \otimes f \otimes v &\mapsto f(v) u,\\
R(s)&:- \otimes V \otimes V \rightarrow - \otimes V \otimes V,\quad&
u \otimes v \otimes w &\mapsto u \otimes w \otimes v,\\
R(x)&:- \otimes V \rightarrow - \otimes V,
\quad&
u \otimes v &\mapsto \Omega(u \otimes v),
\end{align*}
 where $\omega := \sum_{i=1}^n v_i \otimes f_i$ 
assuming $\{v_i\}$ and $\{f_i\}$ are dual bases for $V$ and $V^*$,
and $\Omega$ is the Casimir tensor $\sum_{i,j=1}^n e_{i,j} \otimes e_{j,i} \in \g \otimes \g$.
(One can also define an analogous functor $R$ with
$\g$ replaced by the general linear Lie superalgebra.)

Returning to the main discussion,
we note by Theorem~\ref{thm1} that there is a functor $\OB\to \AOB$ sending the 
generators of $\OB$ to the generators of $\AOB$ with the same name.
Hence we can
interpret any oriented Brauer diagram with bubbles also as a morphism
in $\AOB$.  
We also want to add dots to our diagrams, corresponding to the new
generator $x$ which we represent by the diagram
$$
\includegraphics{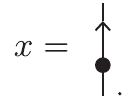}
$$
To formalize this, we define a {\em dotted oriented Brauer diagram
  with bubbles}
to be an
oriented Brauer diagram with bubbles, such that each segment
is decorated in addition with some non-negative
number of dots, where
a {\em segment} means a connected component of the diagram
obtained when all crossings are deleted.
Two dotted oriented Brauer diagrams with bubbles are {\em equivalent}
if one can be obtained from the other by continuously deforming strands
through other strands and crossings as above,
and also by sliding dots along
strands, all subject to the requirement that {\em dots are never allowed to pass through crossings}.
For example, here are two such diagrams which are not
equivalent:
$$
\includegraphics{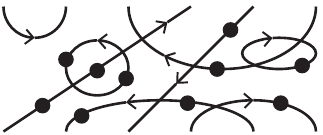}
\qquad\qquad
\includegraphics{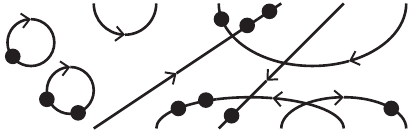}
$$
Any dotted oriented Brauer
diagram with bubbles is equivalent to one that is a vertical composition
of elementary diagrams of the form $\ob a \,c\, \ob b$, $\ob a\, d\, \ob b$, 
$\ob a\, s\, \ob b$, $\ob a\, t\, \ob b$, $\ob a\, x\, \ob b$ for various $\ob a, \ob b \in \wrd$. Hence it can be
interpreted as a morphism in $\AOB$. Moreover,
the resulting morphism is well defined independent of the choices
made, and it depends only on the equivalence class of the
 original diagram. 
For example, the following diagram $x'$
represents the morphism $(d \down) \circ (\down x \down) \circ (\down c)\in
\End_{\AOB}(\down)$:
\begin{equation*}
\includegraphics{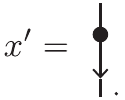}
\end{equation*}
Also the relation (\ref{AX}) transforms into the first of the
following two diagrammatic relations;
the second follows from the first by
composing with $s$ on the top and bottom:
$$
\includegraphics{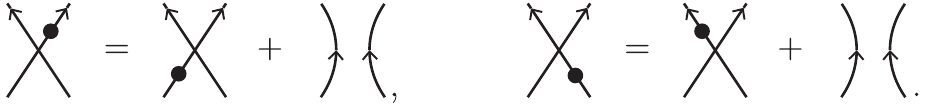}
$$ 
These local relations explain how to 
move dots past crossings in any diagram.

A dotted oriented Brauer diagram with bubbles is {\em normally ordered}
if:
\begin{itemize}
\item[$\star$]
all of its bubbles are clockwise, crossing-free, and there are no other strands shielding any of them from the leftmost edge of the picture;
\item[$\star$]
all of its dots are either on bubbles
or on outward-pointing boundary segments, i.e. segments which
intersect the boundary at a point that is directed out of the picture.
\end{itemize}
For example, of the two dotted oriented Brauer diagrams with bubbles
displayed in the previous paragraph, only the second one is normally ordered.

\begin{theorem}\label{thm2}
For $\ob a, \ob b \in \wrd$, the space $\Hom_{\AOB}(\ob a, \ob b)$ is a
free $\k$-module with basis given by 
equivalence classes of normally ordered dotted
oriented Brauer diagrams with bubbles of 
type $\ob a \rightarrow \ob b$.
\end{theorem}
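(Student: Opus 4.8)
The plan is to establish the two halves of the statement separately: that the normally ordered diagrams \emph{span} each $\Hom_{\AOB}(\ob a,\ob b)$, and that they are \emph{linearly independent}. The first is a finite straightening procedure using the defining relations; as is typical for results of this kind, essentially all of the difficulty lies in the second, and there the plan is to exploit the functor $R$ of (\ref{F}) and the representation theory of $\gl_n$ for $n\gg 0$.

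\emph{Spanning.} By Theorem~\ref{thm1}, every morphism of $\AOB$ is a $\k$-linear combination of vertical composites of the elementary diagrams $\ob a\,c\,\ob b$, $\ob a\,d\,\ob b$, $\ob a\,s\,\ob b$, $\ob a\,t\,\ob b$, $\ob a\,x\,\ob b$, i.e.\ of dotted oriented Brauer diagrams with bubbles, so it suffices to rewrite one such diagram as a combination of normally ordered ones. I would argue by induction, with outer induction on the number of crossings and inner induction on the number of dots lying on segments not yet in legal position. The relations inherited from $\OB$ put the underlying oriented Brauer diagram into its topological normal form and pull all bubbles to the left edge, making them clockwise and crossing-free, without touching the dots. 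The relation (\ref{AX}) and its companion (obtained by composing it with $s$ above and below) show how to drag a dot past a crossing at the cost of a diagram with strictly fewer crossings, so that after finitely many steps every dot lies on a crossing-free segment or on a bubble. Finally, auxiliary identities describing how $x$ slides around $c$, $d$ and $t$ --- derived from (\ref{A down})--(\ref{I}) and (\ref{AX}), in the spirit of the diagram $x'$ appearing in the text --- let one move a dot off an inward-pointing boundary segment, again modulo diagrams with fewer crossings. This terminates and yields the desired spanning set.

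\emph{Linear independence.} Any finite $\k$-linear relation among normally ordered diagrams involves only finitely many of the parameters $\Delta_k$ and only finitely many dots and crossings, so it is enough to prove independence after specialising those parameters to suitably generic scalars; and, in view of $\OB(\delta)=\uRep_0(GL_\delta)$, it is natural to realise everything inside $\gl_n\operatorname{-mod}$. Concretely I would apply $R$ for $\gl_n$ with $n$ large and then evaluate at a Verma module $M(\lambda)$ with $\lambda$ very generic, obtaining from each morphism $\ob a\to\ob b$ a linear operator on $M(\lambda)\otimes V^{\otimes r}\otimes(V^*)^{\otimes s}$. Under this evaluation, the generator $x$ inserted at a given tensor slot acts by the corresponding ``mixed Casimir'', a Jucys--Murphy-type element; the several such operators commute, and for generic $\lambda$ they act with pairwise distinct eigenvalues on the standard basis vectors of $M(\lambda)\otimes V^{\otimes r}\otimes(V^*)^{\otimes s}$ arising from a triangular decomposition. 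A clockwise bubble carrying $k$ dots acts by the image of a central element of $U(\gl_n)$, hence by an explicit polynomial in $\lambda$, and by varying $\lambda$ and $n$ these polynomials can be made algebraically independent. I would then order the basis diagrams refining (underlying oriented Brauer diagram, tuple of dot-numbers, bubble content) and show that the matrix recording the action of the $R$-images on a carefully chosen collection of vectors is triangular with invertible diagonal: distinct bubble contents are separated because the associated central elements take independent generic values, distinct dot-data are separated by the distinct Casimir eigenvalues, and for fixed dot- and bubble-data one is reduced to the linear independence of oriented Brauer diagrams in $\OB(\delta)$ --- i.e.\ of the diagram basis of the walled Brauer algebra $B_{r,s}(\delta)$ --- which is classical and is faithfully realised in $\End_{\gl_n}(V^{\otimes r}\otimes(V^*)^{\otimes s})$ as soon as $n\gg r+s$.

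\emph{Main obstacle.} The crux is making the genericity and triangularity bookkeeping of the last step genuinely cohere: one must choose $\lambda$ and $n$ uniformly for the finitely many diagrams under consideration, control exactly which Verma modules occur in $M(\lambda)\otimes V^{\otimes r}\otimes(V^*)^{\otimes s}$ and with what multiplicities, identify the ``leading term'' of each normally ordered diagram under evaluation, and simultaneously guarantee enough distinctness among the Casimir eigenvalues and among the central values realising the bubble parameters --- so that, in particular, $\End_{\AOB}(\varnothing)=\k[\Delta_1,\Delta_2,\dots]$ really is a polynomial ring. The spanning step, by contrast, is entirely mechanical.
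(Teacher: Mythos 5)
Your spanning argument matches what the paper needs (though the paper packages it as the statement that the functor $\Theta:\GOB\to\grAOB$ is full) and is correct as outlined. The linear-independence argument, however, is genuinely different from the paper's. The paper does not use Verma modules or the functor $R$ of (\ref{F}) at all for this purpose: it first proves the cyclotomic basis theorem (Theorem~\ref{thm3}) and then deduces Theorem~\ref{thm2} trivially by choosing $\ell$ larger than the number of dots appearing in any putative relation and passing to $\COB^f$. The linear independence in $\COB^f$ is established with a \emph{finite-dimensional} filtered representation $\Psi_\lambda$ on $V(\ob a)$ ($V=\k^n$, no Verma module), whose definition (\ref{x action}) is discovered via finite $W$-algebra / Skryabin equivalence considerations; the injectivity on associated graded Hom spaces (Lemma~\ref{vustinj}) is checked by an explicit basis computation in the smash product $\big(\k[x_1,\dots,x_m]/(x_i^\ell)\big)\rtimes S_m$ for words of bounded length, followed by a Zariski density argument in the parameters $(\delta_1,\dots,\delta_\ell)$ and then base-change arguments to reduce to the algebraically closed characteristic-zero case.

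The Verma-module/Jucys--Murphy route you propose is a recognizable alternative strategy (in the style of Arakawa--Suzuki arguments for degenerate affine Hecke algebras, and the paper's own Remark~\ref{rem2} notes that this realization does suffice to see the algebraic independence of the bubbles $\Delta_k$). But the crux you yourself flag --- identifying the ``leading term'' of each normally ordered diagram under evaluation on $M(\lambda)\otimes V(\ob a)$, and proving the triangularity with invertible diagonal uniformly across the diagram data, the dot data, and the bubble data, with a simultaneous genericity choice of $\lambda$ and $n$ --- is exactly where all of the work lies, and you have not carried it out. This is not a cosmetic omission: among other things you would need (i) an explicit description of a basis of $\Hom_{\gl_n}(M(\lambda)\otimes V(\ob a),\,M(\lambda)\otimes V(\ob b))$ compatible with the cap/cup morphisms (this space is not an endomorphism algebra of a finite-dimensional module and the reduction to $\End_{\gl_n}(V^{\otimes m})$ needs care when a Verma factor is present); (ii) a change of basis that approximately diagonalizes the commuting Jucys--Murphy operators before any triangularity can be read off; and (iii) a density argument in $(\lambda,n)$ playing the role of the paper's Lemma~5.1, since each choice of $\lambda,n$ only evaluates the bubbles at a single point. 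As written, the proposal is a plausible plan with the key step left open, so it does not yet constitute a proof.
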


By Theorem~\ref{thm2}, the endomorphism algebra $\End_{\AOB}(\varnothing)$
is the polynomial algebra $\k[\Delta_1,\Delta_2,\dots]$
generated by the clockwise dotted bubbles
$$
\includegraphics{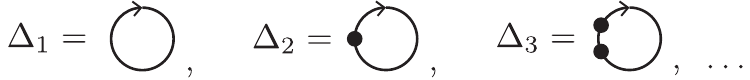}
$$
In the same way as we explained earlier for $\OB$, one can then view $\AOB$
as a $\k[\Delta_1,\Delta_2,\dots]$-linear category with 
$\Delta_i g := \Delta_i \otimes g$;
note though that $\Delta_i \otimes g$ and $g \otimes \Delta_i$
are in general different for $i \geq 2$
so that the monoidal structure is not
$\k[\Delta_1,\Delta_2,\dots]$-linear.
Then given scalars $\delta_1,\delta_2,\dots \in \k$, we 
let $\AOB(\delta_1,\delta_2,\dots)$ be the $\k$-linear category obtained from
$\AOB$ by
 specializing
each $\Delta_i$ at $\delta_i$, i.e.
$\AOB(\delta_1,\delta_2,\dots) = \k \otimes_{\k[\Delta_1,\Delta_2,\dots]} \AOB$
viewing $\k$ as a $\k[\Delta_1,\Delta_2,\dots]$-module so each $\Delta_i$ acts as $\delta_i$.
Theorem~\ref{thm2} implies that $\Hom_{\AOB(\delta_1,\delta_2,\dots)}(\ob a, \ob b)$
is a free $\k$-module with basis given by
the equivalence classes of
normally ordered dotted Brauer diagrams of type $\ob a \rightarrow \ob
b$ (now with no bubbles).


\begin{remark}\label{rem1}\rm
One can also describe $\End_{\AOB}(\varnothing)$ as the algebra
$\k[\Delta_1',\Delta_2',\dots]$ freely generated by the counterclockwise dotted bubbles
$$
\includegraphics{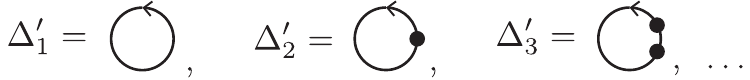}
$$
The relationship between 
$\Delta_i$ and $\Delta_j'$
is explained by the identity 
\begin{equation}\label{rel}
\left(1+\sum_{i \geq 1} \Delta_i u^{-i}\right) \left(1-\sum_{j \geq 1} \Delta_j' u^{-j}\right) = 1,
\end{equation}
which (up to some signs) is the same as the relationship between elementary and complete symmetric functions in the ring of symmetric functions.
For example, the coefficient of $u^{-2}$ in (\ref{rel})
is equivalent to the assertion that $\Delta_2 = \Delta_2'+\Delta_1 \Delta_1'$, which follows from the following calculation with relations:
$$\includegraphics{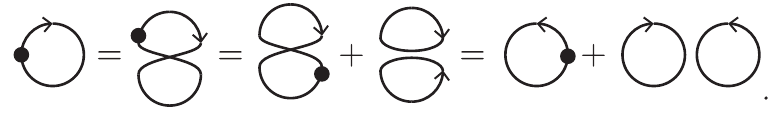}$$
The identities corresponding to the other coefficients of $u$ 
in (\ref{rel}) follow in a similar way.
\end{remark}

\begin{remark}\label{rem2}\rm
It is instructive to compute the images of $\Delta_1,\Delta_2,\dots$
under the functor $R:\AOB \rightarrow \End(\g\operatorname{-mod})^{\operatorname{rev}}$ from (\ref{F}). 
Since this functor maps $\End_{\AOB}(\varnothing)$ to 
$\End(\operatorname{Id})$, which is canonically identified with the center $Z(\g)$ of the universal enveloping algebra $U(\g)$,
these are naturally elements of $Z(\g)$:
$$
R(\Delta_k) = \sum_{\substack{1 \leq i_1,\dots, i_k \leq n \\ i_1 = i_k}}
e_{i_{k-1},i_k}\cdots e_{i_2,i_3} e_{i_1,i_2}.
$$
(The right hand side of this formula means the scalar $n$ in case $k=1$.)
Using this and taking a limit as $n \rightarrow \infty$,
one can give an alternative proof of the algebraic independence of 
$\Delta_1,\Delta_2,\dots$ in $\End_{\AOB}(\varnothing)$.
\end{remark}

\subsection*{\boldmath The category $\COB^f$}
Let $\ell \geq 1$ be a fixed {\em level}
and $f(u)  \in \k[u]$
be a monic polynomial of degree $\ell$.
The {\em cyclotomic oriented Brauer category} $\COB^f$ is the quotient of $\AOB$ by the right tensor ideal generated by $f(x) \in \End_{\AOB}(\uparrow)$.
Note 
this category is not monoidal in any natural way
(except if $\ell = 1$ when $\COB^f$ turns out to be isomorphic to $\OB$),
although it is still a right module category over $\AOB$.
It has the same objects as $\AOB$ while its morphism spaces are quotients of the ones in $\AOB$. Hence 
any morphism in $\AOB$ can also be viewed as a morphism in $\COB^f$.
In particular we can interpret equivalence classes of 
dotted oriented Brauer diagrams with bubbles
also as morphisms in $\COB^f$.

\begin{theorem}\label{thm3}
For $\ob a, \ob b \in \wrd$, the space $\Hom_{\COB^f}(\ob a, \ob b)$ is a
free $\k$-module with basis given by equivalence classes of 
normally ordered dotted
oriented Brauer diagrams with bubbles of 
type $\ob a \rightarrow \ob b$, subject to the additional constraint that
each strand is decorated by at most $(\ell-1)$ dots.
\end{theorem}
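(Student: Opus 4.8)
The plan is to prove Theorem~\ref{thm3} in the two standard steps: the displayed diagrams span $\Hom_{\COB^f}(\ob a,\ob b)$, and they are linearly independent; as usual the second step carries all the weight. For spanning, start from Theorem~\ref{thm2}, which gives a basis of $\Hom_{\AOB}(\ob a,\ob b)$, and push it forward along the quotient functor $\AOB\to\COB^f$. It then suffices to show, modulo the defining right tensor ideal $I$ generated by $f(x)$, that any normally ordered dotted oriented Brauer diagram with bubbles carrying $\ell$ or more dots somewhere equals a $\k$-linear combination of diagrams with strictly fewer dots in total. If the offending dots lie on a strand, isolate them near an outward-pointing boundary point (legitimate because a segment is crossing-free), so that the diagram reads $(\mathrm{id}\otimes x^{k}\otimes\mathrm{id})\circ D_0$ with $k\ge\ell$; writing $x^{k}=x^{k-\ell}f(x)+r(u)$ with $\deg r<k$, the $r$-term is a combination of diagrams with fewer dots, while the $f(x)$-term lies in $I$ up to corrections again having fewer dots on that strand, the corrections coming from commuting $f(x)$ past braidings via relation~(\ref{AX}). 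If the offending dots lie on a bubble, the same move applied after closing the strand into a circle shows that the generator $\Delta_{k+1}$ of $\End_{\AOB}(\varnothing)$ is, modulo $I$, a polynomial in $\Delta_1,\dots,\Delta_\ell$. In both cases one finishes by induction on the total number of dots, and the argument is valid over any ground ring.

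For linear independence I would first reduce to a universal situation: since $\COB^f$ is presented by generators and relations, every morphism space is obtained by base change from the corresponding one over the polynomial ring $R:=\Z[a_0,\dots,a_{\ell-1},\Delta_1,\dots,\Delta_\ell]$ in which the coefficients of $f$ and the parameters $\Delta_i$ are kept as indeterminates, so it suffices to prove that the claimed diagrams form an $R$-basis, which by the spanning step means proving their linear independence over $R$. Here I would build a faithful functor in the spirit of~(\ref{F}) and Remark~\ref{rem2}. For each large $N$ choose a parabolic $\mathfrak p\subseteq\gl_N$ with $\ell$ diagonal blocks; for generic block-constant highest weight $\lambda$ the operator $R(x)$ acts semisimply on $M^{\mathfrak p}(\lambda)\otimes V$ with $\ell$ distinct eigenvalues, one per block, so choosing the block sizes and $\lambda$ appropriately makes these eigenvalues the roots of $f$. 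Then $R$ kills $f(x)$ on the parabolic category $\mathcal O$ for $(\gl_N,\mathfrak p)$, hence descends to a $\k$-linear functor $\COB^f\to\mathcal O$, under which $a_0,\dots,a_{\ell-1}$ and $\Delta_1,\dots,\Delta_\ell$ go to explicit complex numbers $\delta^{(N)}$; as $N$ and the block data vary, the points $\delta^{(N)}$ fill a Zariski-dense subset of $\C^{2\ell}$. Thus any $R$-linear relation among the diagrams would, after applying these functors, vanish at a dense set of points and so be trivial --- provided the images of the finitely many \emph{bubble-free} such diagrams are linearly independent in $\Hom_{\mathcal O}\!\big(M^{\mathfrak p}(\lambda)\otimes V^{\ob a},\,M^{\mathfrak p}(\lambda)\otimes V^{\ob b}\big)$. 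Feeding this back, together with the congruences $\Delta_{k+1}\equiv(\text{polynomial in }\Delta_1,\dots,\Delta_\ell)$ found above and the identification of a dotted clockwise bubble with the corresponding monomial in $\Delta_1,\dots,\Delta_\ell$, one upgrades linear independence of the bubble-free diagrams to freeness of $\Hom_{\COB^f}(\ob a,\ob b)$ over $R$ on the full claimed basis, and then to the asserted freeness over $\k$ by base change.

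The main obstacle is precisely that stable-range statement: for $N\gg 0$ and generic $\lambda$, the space $\Hom_{\mathcal O}\!\big(M^{\mathfrak p}(\lambda)\otimes V^{\ob a},M^{\mathfrak p}(\lambda)\otimes V^{\ob b}\big)$ must have dimension equal to the number of bubble-free normally ordered dotted walled Brauer diagrams $\ob a\to\ob b$ with at most $\ell-1$ dots per strand. Since spanning already gives the inequality ``$\le$'', what is needed is the matching lower bound, which comes down to analysing the parabolic Verma flags of $M^{\mathfrak p}(\lambda)\otimes V^{\ob a}$ and showing that in the stable range $\Hom$ between two such modules is as large as the naive count of compatible matchings, i.e.\ that no coincidences among central characters collapse it. (A representation-theory-free alternative: define on $\Hom_{\AOB}(\ob a,\ob b)$ the straightening map $\rho$ that reduces each strand's dots modulo $f$ and each overloaded bubble modulo $\Delta_1,\dots,\Delta_\ell$; linear independence in $\COB^f$ is then equivalent to the assertion that $\rho$ annihilates every element $h\circ(f(x)\otimes\mathrm{id}_{\ob c})\circ g$ of $I$, which one can try to verify by a confluence analysis of the rewriting system obtained by adjoining the rule $x^{\ell}\mapsto x^{\ell}-f(x)$ to the rules underlying Theorem~\ref{thm2}; the obstacle there is organising and resolving all the resulting critical pairs.)
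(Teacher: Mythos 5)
Your outline captures the paper's high-level strategy --- reduce linear independence to a universal/generic situation, find faithful-enough representations, and conclude by Zariski density --- but the key technical step is left as an acknowledged gap, and it is precisely the step that the paper is organized around closing.

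The spanning step is fine, though you should not cite Theorem~\ref{thm2}: the paper derives Theorem~\ref{thm2} \emph{from} Theorem~\ref{thm3}, so quoting it here would read as circular. What you actually use is only that normally ordered diagrams span $\Hom_{\AOB}(\ob a,\ob b)$, which follows directly from relation~(\ref{AX}) and its consequence for moving dots past crossings, independently of any basis theorem (this is the observation that the functor $\Theta$ of~(\ref{GOB to gr AOB}) is full). The bubble reduction ($\Delta_{k+1}$ a polynomial in $\Delta_1,\dots,\Delta_\ell$ modulo the ideal) is also what the paper asserts after Theorem~\ref{thm3} and in Remark~\ref{rem3}.

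For linear independence, you propose a functor to parabolic category $\mathcal O$ for $(\gl_N,\mathfrak p)$ and then need the stable-range fact that $\Hom_{\mathcal O}\big(M^{\mathfrak p}(\lambda)\otimes V^{\ob a},M^{\mathfrak p}(\lambda)\otimes V^{\ob b}\big)$ has dimension at least the naive count of bubble-free normally ordered diagrams with $\leq\ell-1$ dots per strand, which you flag as ``the main obstacle'' and do not establish. This is exactly where the paper diverges. Instead of working in category $\mathcal O$, the paper constructs an explicit \emph{filtered} $\k$-module representation $\Psi_\lambda:\AOB\to\k\text{-fmod}$ on the spaces $V(\ob a)$, where $V=\k^n$ and $n=|\lambda|$ for a pyramid $\lambda$, with $x$ acting by the formula~(\ref{x action}) built from a shift operator $e_p$ together with ``modified transpositions'' $(p,q)^{\ob a}_\lambda$. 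Checking that this actually defines a functor (Lemma~\ref{Psi is a functor}) is the bulk of the work, and it is entirely elementary: one verifies commutation relations using Lemmas~\ref{mod (p,q) prop}--\ref{t and (p,q)}. The payoff is that the associated graded of $\Psi^f_\lambda(\delta_1,\dots,\delta_\ell)$ is the purely combinatorial graded representation $\Phi^\ell_\lambda(\delta)$ of $\TOB^\ell(\delta)$ (Theorem~\ref{repthm}), and the injectivity of $\Phi^\ell_\lambda(\delta)$ on the relevant Hom spaces (Lemma~\ref{vustinj}) is a one-paragraph monomial-independence check --- no parabolic Verma flag analysis, no central-character bookkeeping. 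The paper does acknowledge (\S\ref{later}) that $\Psi_\lambda$ was \emph{discovered} via finite $W$-algebras and Skryabin's equivalence (i.e.\ morally your category $\mathcal O$ picture), but it deliberately avoids invoking that machinery, precisely because the stable-range dimension count you would need is nontrivial to extract. So your proposal is a genuine sketch of the right strategy, but the representation-theoretic input that would close the argument is missing; the paper's contribution is the construction and direct verification of $\Psi_\lambda$, which converts that input into a filtered-to-graded reduction and a simple combinatorial lemma. Your alternative ``confluence/diamond lemma'' route is similarly only named and would face a large number of critical pairs (including all the mixed ones coming from the zig-zag and inversion relations), so it does not constitute a proof as stated either.

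Finally, a small comment on the universal reduction: the paper's base-change argument runs in the opposite direction (prove first over an algebraically closed field of characteristic zero, then embed an arbitrary characteristic-zero domain into such a field, then handle positive characteristic by lifting to characteristic zero), but your proposal to work over $R=\Z[a_0,\dots,a_{\ell-1},\Delta_1,\dots,\Delta_\ell]$ and specialize is logically equivalent and would also work, modulo the caveat above.
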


This means in $\COB^f$ that any clockwise dotted bubble with
$\ell$ or more dots can be expressed in terms of $\Delta_1,\dots,\Delta_\ell$.
So there are really only $\ell$ algebraically independent parameters
that can be specialized here: for $\delta_1,\dots,\delta_\ell \in \k$ we let
$\COB^f(\delta_1,\dots,\delta_\ell)$ be the $\k$-linear category obtained from
$\COB^f$ by specializing $\Delta_i$ at $\delta_i$ for each $i=1,\dots,\ell$.
In other words, we impose the additional relations that $\Delta_i \ob
a = \delta_i \ob a$ for each $\ob a \in \wrd$ and $i=1,\dots,\ell$.
Theorem~\ref{thm3} then implies that $\Hom_{\COB^f(\delta_1,\dots,\delta_\ell)}(\ob a, \ob b)$
is a free $\k$-module with basis arising from the 
normally ordered dotted oriented Brauer diagrams with no bubbles each of whose strands are decorated by at most $(\ell-1)$ dots.

\begin{remark}\label{rem3}\rm
There is another way to specify the parameters 
of $\COB^f(\delta_1,\dots,\delta_\ell)$ which is more symmetric.
Suppose for this that we are given a pair of monic polynomials
$f(u), f'(u) \in \k[u]$, both of the same degree $\ell$.
From these we extract scalars $\delta_i \in \k$ by setting
\begin{equation}\label{deltas}
1+\sum_{i \geq 1} \delta_i u^{-i} := f'(u) / f(u)
\in\k[[u^{-1}]].
\end{equation}
Then we introduce the alternative notation $\COB^{f,f'}$
to denote $\COB^f(\delta_1,\dots,\delta_\ell)$.
By definition, this is the quotient of $\AOB$ by the right tensor ideal 
generated by $f(x)$ and $\Delta_i - \delta_i $
for each $i=1,\dots,\ell$.
Letting $\delta_j' \in \k$ be defined from the identity
\begin{equation}\label{delta id}
\left(1+\sum_{i \geq 1} \delta_i u^{-i}\right) \left(1-\sum_{j \geq 1} \delta_j' u^{-j}\right) = 1
\end{equation}
like in (\ref{rel}),
one can check 
that this right tensor ideal is generated equivalently by
$f'(x')$ and $\Delta_j' - \delta_j'$ for each $j=1,\dots,\ell$.
Moreover it automatically contains the elements
$\Delta_i - \delta_i$ and $\Delta_j'-\delta_j'$
for all $i,j > \ell$.
\end{remark}

\section{Preliminaries}\label{prelim}

\subsection{Conventions} 
Given an object $\ob a$  in some category, we write $1_\ob a$ for its identity morphism.  When it is unlikely to result in confusion, it will sometimes be convenient to abuse notation in the standard way by writing $\ob a$ for both the object as well as its identity morphism.
We assume the reader is familiar with the definition of a monoidal
category   found, for instance, in \cite{Mac}.  A relevant example of
a monoidal category is $\k$-mod, the category of all 
$\k$-modules with
the usual tensor product.
We will use the term \emph{tensor functor} to mean a 
strong monoidal functor.  

\subsection{Braided monoidal categories}  
Recall that a 
\emph{braiding} on 
a monoidal category $\cat{M}$ is a natural isomorphism $\sigma$ from the identity functor on $\cat M\times\cat M$ to the functor given by  $(\ob a,\ob b)\mapsto(\ob b,\ob a)$ such that
$\sigma_{\ob a,\ob b\otimes\ob c}=(1_\ob b\otimes \sigma_{\ob a,\ob c})\circ(\sigma_{\ob a,\ob b}\otimes 1_\ob c)$ and  $\sigma_{\ob a\otimes \ob b, \ob c}=(\sigma_{\ob a,\ob c}\otimes 1_\ob b)\circ(1_\ob a\otimes \sigma_{\ob b,\ob c})$
 for all objects $\ob{a,b,c}$ in $\cat{M}$.  The Yang-Baxter equation holds in all braided monoidal categories; in the strict case it says that all objects $\ob{a,b,c}$ satsify
\begin{equation}\label{Yang-Baxter} (1_\ob c\otimes\sigma_{\ob a,\ob b})\circ(\sigma_{\ob a,\ob c}\otimes1_\ob b)\circ(1_\ob a\otimes\sigma_{\ob b,\ob c})=(\sigma_{\ob b,\ob c}\otimes1_\ob a)\circ(1_\ob b\otimes\sigma_{\ob a,\ob c})\circ(\sigma_{\ob a,\ob b}\otimes1_\ob c).
\end{equation}
The braiding is called \emph{symmetric} if 
$\sigma_{\ob{a,b}}^{-1}=\sigma_{\ob{b,a}}$ for all objects $\ob{a,b}$.  
A monoidal category equipped with a braiding (resp. a symmetric braiding) is called  \emph{braided} (resp. \emph{symmetric}).  For example, $\k$-mod is  symmetric with $\sigma_{U,V}:u\otimes v\mapsto v\otimes u$. 

\subsection{Ideals and quotients}\label{ideals} 
Suppose $\cat{M}$ is a monoidal category.  A \emph{right tensor ideal} $\cat{I}$ of $\cat{M}$ 
is the data of a submodule $\cat{I}(\ob a,\ob b)\subseteq\Hom_\cat{M}(\ob a, \ob b)$ for each pair of objects $\ob a,\ob b$ in $\cat{M}$, such that for all objects $\ob{a, b, c, d}$ we have $h\circ g\circ f\in\cat{I}(\ob a, \ob d)$ whenever $f\in\Hom_\cat{M}(\ob a, \ob b)$, $g\in\cat{I}(\ob b,\ob c)$, $h\in\Hom_\cat{M}(\ob c,\ob d)$, and $g\otimes 1_{\ob c}\in\cat{I}(\ob a\otimes \ob c, \ob b\otimes\ob c)$ whenever $g\in\cat{I}(\ob a,\ob b)$.  
One can similarly define left and two-sided tensor ideals. 

The \emph{quotient} $\cat{M}/\cat{I}$ 
of $\cat M$ by right tensor ideal $\cat{I}$
is the category with the same objects as $\cat{M}$ and morphisms given by $\Hom_{\cat{M}/\cat{I}}(\ob a,\ob b):=\Hom_\cat{M}(\ob a,\ob b)/\cat{I}(\ob a,\ob b)$.  
The tensor product on $\cat{M}$ induces a bifunctor  $\cat{M}/\cat{I}\times\cat{M}\to\cat{M}/\cat{I}$ which gives $\cat{M}/\cat{I}$ the structure of a right module category over $\cat{M}$ in the sense of \cite[Definition 2.5]{Greenough} (see also \cite[Definition 2.6]{Ostrik03}).  
In general, $\cat{M/I}$ does {\em not} inherit the structure of a monoidal category from $\cat{M}$.  However if $\cat{M}$ is braided
then $$1_{\ob c}\otimes g=\sigma_{\ob b,\ob c}\circ(g\otimes 1_{\ob c})\circ\sigma^{-1}_{\ob a,\ob c}\in\cat{I}(\ob c\otimes\ob a,\ob c\otimes\ob b)
$$ 
whenever $g\in\cat{I}(\ob a,\ob b)$, hence every right tensor ideal in a braided monoidal category is a two-sided tensor ideal.  It is straightforward to check that the quotient of a monoidal category by a two-sided tensor ideal inherits the structure of a monoidal category.  Moreover, such quotients of a braided (resp.~symmetric) monoidal category are again braided (resp. symmetric).  

\subsection{Duality}\label{duality} A \emph{right dual} of an object $\ob a$ in a monoidal category  
consists of an object $\ob a^*$ together  with a unit morphism 
$\eta_{\ob a}:\unit\to  \ob a\otimes\ob a^*$  and a counit morphism  $\ep_{\ob a}:\ob a^*\otimes \ob a\to\unit$ such that
$(1_{\ob a}\otimes \ep_{\ob a})\circ(\eta_{\ob a}\otimes1_{\ob a})=1_{\ob a}$ and $(\ep_{\ob a} \otimes 1_{\ob a^*})\circ(1_{\ob a^*}\otimes \eta_{\ob a})=1_{\ob a^*}$ (here we are omitting associativity and unit isomorphisms).
There is also a notion of left duals.  A monoidal category in which every object has both a left and right dual is called \emph{rigid}.  
In a symmetric monoidal category, an object has a right dual $\ob a^*$
if and only if it has a left dual ${^*}\ob a$, in which case there is a canonical isomorphism  $\ob a^* \cong {^*}\ob a$.
We will only consider right duals for the remainder of the article.  

If $\ob{a,b,c}$ are objects in a monoidal category $\cat{M}$ and $\ob a^*$ is a right dual to $\ob a$, then the assignment $h\mapsto(1_{\ob a}\otimes h)\circ(\eta_{\ob a}\otimes1_{\ob b})$ is a bijection  
\begin{equation}\label{dual hom}\Hom_\cat{M}(\ob a^*\otimes\ob b,\ob c)\to\Hom_\cat{M}(\ob b,\ob a\otimes \ob c) \end{equation} 
with inverse $g\mapsto(\ep_{\ob a}\otimes1_{\ob c})\circ(1_{\ob a^*}\otimes g)$.  Similarly, $h\mapsto(1_{\ob c}\otimes \ep_{\ob a})\circ(h\otimes 1_{\ob a})$ is a bijection 
 \begin{equation}\label{dual hom 2}\Hom_\cat{M}(\ob b,\ob c\otimes\ob a^*)\to\Hom_\cat{M}(\ob b\otimes\ob a,\ob c) \end{equation} 
 with inverse $g\mapsto (g\otimes1_{\ob a^*})\circ(1_{\ob b}\otimes \eta_{\ob a})$. 

\begin{lemma}\label{braid inverse} If $\ob a$ is an object of a braided monoidal category $\cat M$ possessing a right dual $\ob{a}^*$, then 
the morphism $$
(\ep_\ob a\otimes 1_\ob a\otimes 1_{\ob a^*})\circ(1_{\ob a^*}\otimes\sigma_{\ob{a,a} }\otimes1_{\ob a^*})\circ(1_{\ob a^*}\otimes1_{\ob a}\otimes \eta_\ob a):\ob a^* \otimes \ob a
\rightarrow \ob a \otimes \ob a^*
$$ 
is invertible.
\end{lemma}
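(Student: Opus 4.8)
The plan is to identify the morphism in the statement with the inverse braiding $\sigma_{\ob a,\ob a^*}^{-1}\colon\ob a^*\otimes\ob a\to\ob a\otimes\ob a^*$, which is automatically an isomorphism since it is a component of the braiding's inverse. Write $\theta$ for the displayed morphism. First I would invoke Mac Lane's coherence theorem to assume $\cat M$ strict, so that associativity and unit constraints can be suppressed; note in particular that $\sigma_{\ob a,\unit}=1_\ob a$, which follows from the hexagon axiom by taking the second and third arguments to be $\unit$ (so $\sigma_{\ob a,\unit}$ is idempotent, hence equal to $1_\ob a$).

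The key step is a local identity: I claim that
\[
(\sigma_{\ob a,\ob a}\otimes 1_{\ob a^*})\circ(1_\ob a\otimes\eta_\ob a)=(1_\ob a\otimes\sigma_{\ob a,\ob a^*}^{-1})\circ(\eta_\ob a\otimes 1_\ob a)
\]
as morphisms $\ob a\to\ob a\otimes\ob a\otimes\ob a^*$. To prove this I would apply the hexagon axiom to the triple $(\ob a,\ob a,\ob a^*)$ to write $\sigma_{\ob a,\ob a\otimes\ob a^*}=(1_\ob a\otimes\sigma_{\ob a,\ob a^*})\circ(\sigma_{\ob a,\ob a}\otimes 1_{\ob a^*})$, then combine it with naturality of $\sigma$ applied to $\eta_\ob a\colon\unit\to\ob a\otimes\ob a^*$, which (using $\sigma_{\ob a,\unit}=1_\ob a$) reads $\sigma_{\ob a,\ob a\otimes\ob a^*}\circ(1_\ob a\otimes\eta_\ob a)=\eta_\ob a\otimes 1_\ob a$; cancelling the invertible factor $1_\ob a\otimes\sigma_{\ob a,\ob a^*}$ gives the claim. (Diagrammatically this just says: creating a cup to the right of a strand and then braiding the strand over the cup's left leg equals creating the cup to the left and braiding the strand under its right leg.)

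Next I would substitute this identity into $\theta$. Tensoring on the left by $1_{\ob a^*}$ and using the interchange law, the bottom two factors $(1_{\ob a^*}\otimes\sigma_{\ob a,\ob a}\otimes 1_{\ob a^*})\circ(1_{\ob a^*}\otimes 1_\ob a\otimes\eta_\ob a)$ of $\theta$ become $(1_{\ob a^*}\otimes 1_\ob a\otimes\sigma_{\ob a,\ob a^*}^{-1})\circ(1_{\ob a^*}\otimes\eta_\ob a\otimes 1_\ob a)$, so that
\[
\theta=(\ep_\ob a\otimes 1_\ob a\otimes 1_{\ob a^*})\circ(1_{\ob a^*}\otimes 1_\ob a\otimes\sigma_{\ob a,\ob a^*}^{-1})\circ(1_{\ob a^*}\otimes\eta_\ob a\otimes 1_\ob a).
\]
Since $\ep_\ob a$ and $\sigma_{\ob a,\ob a^*}^{-1}$ now act on disjoint tensor factors, the interchange law lets me slide $\ep_\ob a$ to the bottom, rewriting the right-hand side as $\sigma_{\ob a,\ob a^*}^{-1}\circ(\ep_\ob a\otimes 1_{\ob a^*}\otimes 1_\ob a)\circ(1_{\ob a^*}\otimes\eta_\ob a\otimes 1_\ob a)$; and the last two factors collapse by the zig-zag relation $(\ep_\ob a\otimes 1_{\ob a^*})\circ(1_{\ob a^*}\otimes\eta_\ob a)=1_{\ob a^*}$ (tensored with $1_\ob a$). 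Thus $\theta=\sigma_{\ob a,\ob a^*}^{-1}$, which is invertible with inverse $\sigma_{\ob a,\ob a^*}$.

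I expect the only genuinely delicate point to be the local identity in the second paragraph: one has to invoke exactly the right hexagon and the right naturality square and be careful about which strand the braiding acts on. Everything after that is routine bookkeeping with the interchange law and the zig-zag relations, and in principle could also be read off from the graphical calculus for braided monoidal categories, though the above gives a self-contained algebraic argument.
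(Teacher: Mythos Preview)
Your proof is correct and follows essentially the same approach as the paper: both reduce to strictness via Mac Lane, use the hexagon axiom together with naturality of $\sigma$ with respect to $\eta_\ob a$ (and the observation $\sigma_{\ob a,\unit}=1_\ob a$) to identify the displayed morphism with $\sigma_{\ob a,\ob a^*}^{-1}$, and finish with the zig-zag relation. The only cosmetic difference is that the paper inserts $\sigma_{\ob a,\ob a^*}^{-1}\circ\sigma_{\ob a,\ob a^*}$ at the top and computes in one long chain, whereas you isolate the key step as a separate ``local identity'' before substituting; the content is the same.
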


\begin{proof} By Mac Lane's Coherence Theorem (see e.g.~\cite{Mac}) we may assume $\cat M$ is strict. Then we claim that
$(\ep_\ob a\otimes 1_\ob a\otimes 1_{\ob a^*})\circ(1_{\ob a^*}\otimes\sigma_{\ob{a,a} }\otimes1_{\ob a^*})\circ(1_{\ob a^*}\otimes1_{\ob a}\otimes \eta_\ob a)=\sigma_{\ob a, \ob a^*}^{-1}$, which is clearly invertible.
To see this, first notice since $\cat M$ is strict that
$\sigma_{\ob a, \unit}=\sigma_{\ob a, \unit\otimes\unit}=\sigma_{\ob a, \unit}\circ\sigma_{\ob a, \unit}$, which implies $\sigma_{\ob a,\unit}=1_\ob a$.   Hence, 
\begin{align*}
(\ep_\ob a\otimes& 1_\ob a\otimes 1_{\ob a^*})\circ(1_{\ob a^*}\otimes\sigma_{\ob{a,a} }\otimes1_{\ob a^*})\circ(1_{\ob a^*}\otimes1_{\ob a}\otimes \eta_\ob a)\\
&=\sigma_{\ob a, \ob a^*}^{-1}\circ\sigma_{\ob a, \ob a^*}\circ 
(\ep_\ob a\otimes 1_\ob a\otimes 1_{\ob a^*})\circ(1_{\ob a^*}\otimes\sigma_{\ob{a,a} }\otimes1_{\ob a^*})\circ(1_{\ob a^*}\otimes1_{\ob a}\otimes \eta_\ob a)\\
&=\sigma_{\ob a, \ob a^*}^{-1}\circ(\ep_\ob a\otimes 1_{\ob a^*}\otimes 1_{\ob a})\circ(1_{\ob a^*}\otimes1_{\ob{a}}\otimes\sigma_{\ob a, \ob a^*})\circ(1_{\ob a^*}\otimes\sigma_{\ob{a,a} }\otimes1_{\ob a^*})\circ(1_{\ob a^*}\otimes1_{\ob a}\otimes \eta_\ob a) \\
&=\sigma_{\ob a, \ob a^*}^{-1}\circ(\ep_\ob a\otimes 1_{\ob a^*}\otimes 1_{\ob a})\circ(1_{\ob a^*}\otimes\sigma_{\ob a,\ob a\otimes \ob a^*})\circ(1_{\ob a^*}\otimes1_{\ob a}\otimes \eta_\ob a) \\
&=\sigma_{\ob a, \ob a^*}^{-1}\circ(\ep_\ob a\otimes 1_{\ob a^*}\otimes 1_{\ob a})\circ(1_{\ob a^*}\otimes \eta_\ob a\otimes1_{\ob a}) \circ(1_{\ob a^*}\otimes\sigma_{\ob a,\unit})\\
&=\sigma_{\ob a, \ob a^*}^{-1}.
\end{align*}
The lemma is proved.
\end{proof}

\subsection{Gradings and filtrations}\label{filters}
By a {\em graded $\k$-module}, we mean a 
$\k$-module $V$ equipped with a $\k$-module decomposition
$V = \bigoplus_{i \in \Z} V_i$.
If $V$ and $W$ are two graded $\k$-modules,
we write $\Hom_\k(V,W)_i$ for the space of all linear maps
from $V$ to $W$
that are {\em homogeneous of degree $i$}, i.e. they send $V_j$ into $W_{i+j}$
for each $j \in \Z$.
We write $\k\operatorname{-gmod}$ for the category of all
graded $\k$-modules with $$
\Hom_{\k\operatorname{-gmod}}(V, W) := 
\bigoplus_{i \in \Z} \Hom_\k(V,W)_i.
$$
There is a natural monoidal structure 
on this category defined so that $(V \otimes W)_i := \bigoplus_{j \in \Z}V_j \otimes W_{i-j}$.

By a {\em filtered $\k$-module}
we mean a $\k$-module $V$ equipped with a $\k$-module filtration
$\cdots \subseteq V_{\leq i} \subseteq V_{\leq i+1} \subseteq \cdots$
such that $\bigcap_{i \in \Z} V_{\leq i} = 0$ and
$\bigcup_{i \in \Z} V_{\leq i} = V$.
If $V$ and $W$ are two filtered $\k$-modules,
we write $\Hom_\k(V, W)_{\leq i}$ for the space of all $\k$-module
homomorphisms that are of {\em filtered degree $i$}, i.e. they send
$V_{\leq j}$ into $W_{\leq i+j}$ for each $j$.
We write $\k\operatorname{-fmod}$ for the category of all
filtered $\k$-modules with $$
\Hom_{\k\operatorname{-fmod}}(V, W)
:= \bigcup_{i \in \Z} \Hom_{\k}(V, W)_{\leq i}.
$$
Again this category has a natural monoidal structure.
If $V$ is a filtered $\k$-module,
the 
{\em associated graded module} 
$\gr V$ is $\bigoplus_{i \in \Z} V_{\leq i} / V_{\leq i-1}$;
then for filtered $V$ and $W$
each $f \in \Hom_\k(V, W)_{\leq i}$ 
induces $\gr_i f \in \Hom_\k(\gr V,\gr W)_i$
in an obvious way.
Conversely, if $V$ is a graded $\k$-module, it can naturally be viewed as a filtered $\k$-module by setting $V_{\leq i} := \bigoplus_{j \leq i} V_j$;
the associated graded module $\gr V$ to this 
is naturally identified with the original graded module $V$.
 
By a {\em graded category}, we mean a category enriched in the monoidal
category 
consisting of all 
graded $\k$-modules with morphisms being the 
homogeneous linear maps of degree zero.
In other words,
$\cat{C}$ is graded if each $\Hom$ space is graded
in a way that is compatible with composition.
A {\em graded functor} $F:\cat{C}\to\cat{D}$ between graded categories is a functor that maps $\Hom_\cat{C}(\ob a, \ob b)_i$ to $\Hom_\cat{D}(F(\ob a), F(\ob b))_i$ for each pair of objects $\ob a$, $\ob b$ in $\cat{C}$ and each $i\in\Z$. 
 A {\em graded monoidal category} is a monoidal category that is graded
in such a way that $\deg(g\otimes h)=\deg(g)+\deg(h)$ whenever $g$ and $h$ are homogenous.


A \emph{filtered category} is a category enriched in the monoidal category consisting of all filtered $\k$-modules
with morphisms being the linear maps of filtered degree zero.
There is a notion of a {\em filtered functor} between filtered categories,
and of a {\em filtered monoidal category}. 
Given a filtered category $\cat{C}$, the associated graded category
$\gr\cat{C}$ is the graded category with the same objects as $\cat{C}$, 
morphisms defined by setting   $\Hom_{\gr\cat{C}}(\ob a, \ob b)_i:=\Hom_{\cat{C}}(\ob a, \ob b)_{\leq i} /\Hom_{\cat{C}}(\ob a, \ob b)_{\leq i-1}$ for each $i\in\Z$,
and the obvious composition law induced by the composition on $\mathcal C$.
Given a filtered functor $F:\cat{C}\to\cat{D}$, we write $\gr F:\gr\cat{C}\to\gr\cat{D}$ for the graded functor induced by $F$ in the obvious way.  

The categories $\k\text{-gmod}$ and $\k\text{-fmod}$
give examples of graded and filtered monoidal categories, respectively.
The associated graded category $\gr (\k\operatorname{-fmod})$
is {\em not} the same as $\k\operatorname{-gmod}$.
However there is a faithful functor 
\begin{equation}\label{Gfunc}
G:\gr (\k\text{-fmod}) \rightarrow \k\text{-gmod}
\end{equation}
which sends a filtered module $V$ to the associated graded module $\gr V$,
and a morphism $f +\Hom_\k(V,W)_{\leq i-1} 
\in \Hom_\k(V,W)_{\leq i} / \Hom_\k(V,W)_{\leq i-1}$
to $\gr_i f$.

\subsection{Generators and relations}\label{g-r}    Let $\cat M$ be a monoidal category.  
Suppose we are given a presentation of $\cat M$ as a $\k$-linear monoidal category.  Let $G$ denote the set of all generating morphisms, and $R$ denote the  relations.
On a couple occasions it will be useful to forget the monoidal structure, and describe $\cat M$ as merely a $\k$-linear category via generators and relations.  To do so, first note that 
\begin{equation}
\widehat{G} := \{1_\ob a\otimes g\otimes 1_\ob b\:|\:g\in G, \ob{a, b}\in\text{ob}(\cat M)\}
\end{equation}
is a generating set of morphisms for $\cat M$ as a $\k$-linear category. To obtain a full set of relations, every relation in $R$ can be written in the form \begin{equation}\label{R relation}\sum_{i\in I}\lambda_ih_1^{(i)}\circ\cdots\circ h^{(i)}_{n_i}=0\end{equation} where $I$ is some finite set, 
$\lambda_i\in\k$ and $h^{(i)}_j\in \widehat{G}$.  Hence, for each $\ob{a, b}\in\text{ob}(\cat M)$ we have the relation
\begin{equation}\label{1 relations}\sum_{i\in I}\lambda_i(1_\ob a\otimes h_1^{(i)}\otimes 1_\ob b)\circ\cdots\circ (1_\ob a\otimes h^{(i)}_{n_i}\otimes 1_\ob b)=0.\end{equation} 
The collection of relations (\ref{1 relations}) for all $\ob{a, b}\in\text{ob}(\cat M)$ as (\ref{R relation}) ranges over all relations in $R$ does {\em not} form a full set of relations for $\cat M$ as a $\k$-linear category.  
However, it is an easy exercise to show that a full set $\widehat{R}$ of relations can be obtained from this by adding the following 
\emph{commuting relations}
\begin{equation}\label{comm relations} (1_{\ob a}\otimes g\otimes 1_{\ob c\otimes \ob d^{(2)}\otimes \ob e})\circ(1_{\ob a\otimes\ob b^{(1)}\otimes\ob c}\otimes h\otimes 1_{\ob e})=(1_{\ob a\otimes\ob b^{(2)}\otimes\ob c}\otimes h\otimes 1_{\ob e})\circ(1_{\ob a}\otimes g\otimes 1_{\ob c\otimes \ob d^{(1)}\otimes \ob e})
\end{equation} 
for all objects $\ob{a, c, e}$ in $\cat M$ and all morphisms $g:\ob b^{(1)}\to\ob b^{(2)}$, $h:\ob d^{(1)}\to\ob d^{(2)}$ in $G$.  

\section{Oriented Brauer Categories}\label{obc}

\subsection{\boldmath Proof of Theorem~\ref{thm1}}\label{dcat}
We are just going to translate some existing results from the literature
which are phrased in the context of ribbon categories.
There are several
different presentations for ribbon tangles, going back to \cite{Tur89} and \cite{Yet}.  It will be convenient for us to refer to the generators and relations found in \cite{Tur}, so our terminology for ribbon tangles will be consistent with \emph{loc.~cit.}. Consider the category of colored ribbon tangles defined in \cite[\S2.3]{Tur}.  Fix a band color, and let $\Rib$ denote the subcategory consisting of all ribbon tangles whose bands have that color.  Finally, let $\Rib_\k$ denote the $\k$-linearization of $\Rib$, i.e. the category with the same objects as $\Rib$ and morphisms that
are formal $\k$-linear combinations of morphisms in $\Rib$.

There is an obvious functor $\Rib_\k\to \OB$ which maps a ribbon tangle to the oriented Brauer diagram with bubbles obtained by replacing each band with a strand (forgetting the number of twists in each band) and projecting onto the plane (forgetting all over/under crossing information).  It is easy to see that this functor is full. In fact, with this functor in mind, it is apparent that one could define oriented Brauer diagrams with bubbles as certain equivalence classes of ribbon tangles.  

A set of generators and relations for the category of colored ribbon tangles as a monoidal category is given in \cite[Lemma 3.1.1 and Lemma 3.3]{Tur}.
Restricting to bands of just one color, 
one easily extracts from this a
presentation for the category $\Rib_\k$ as a $\k$-linear  monoidal category.   Including the obvious additional relations to forget twists in bands and over/under crossing information,
 we are left with a presentation of $\OB$ given by the generators and relations prescribed by Theorem~\ref{thm1} along with the additional relation \begin{equation*}
 (d\up)\circ(\down s)\circ(t\up)\circ(c\up)=\up,
\end{equation*}
where $t$ is the inverse of the morphism in (\ref{I}).
To finish the proof, we must show that this extra relation is a consequence of the other relations. In fact, the following computation uses only (\ref{A down}) and the definition of $t$:
\begin{align*}
 (d\up)\circ(\down s)\circ(t\up)\circ(c\up)
&=(d\up)\circ(\down s)\circ(\down\up\up d)\circ(\down\up c\up)\circ(t\up)\circ
(c\up) \\
&=(\up d)\circ(d\up\down\up)\circ(\down s\down\up)\circ(\down\up c\up)\circ(t\up)\circ
(c\up)\\
&=(\up d)\circ(c\up)=\up.
\end{align*}
This completes the proof of Theorem~\ref{thm1}.

\begin{corollary}\label{OB functor} 
Suppose $\cat M$ is a symmetric monoidal category
and $\ob a$ is an object of $\cat M$ possessing a right dual $\ob a^*$.
Then the assignment on objects $\up\mapsto\ob a$, $\down\mapsto\ob a^*$ and morphisms $c\mapsto\eta_\ob a$, $d\mapsto\ep_\ob a$, $s\mapsto\sigma_{\ob a, \ob a}$ prescribes a tensor functor $\OB\to\cat M$.
\end{corollary}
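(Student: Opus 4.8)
The plan is to invoke the presentation of $\OB$ provided by Theorem~\ref{thm1}. Since $\OB$ is the $\k$-linear monoidal category generated by the objects $\up,\down$ and the morphisms $c,d,s$ subject only to the relations (\ref{A down})--(\ref{I}), constructing a tensor functor $F\colon\OB\to\cat M$ amounts to specifying the images of these generators---which the statement does---and checking that those images satisfy the five relations in $\cat M$. Before doing the latter I would first reduce to the case that $\cat M$ is strict: by Mac Lane's Coherence Theorem $\cat M$ is monoidally equivalent to a strict symmetric monoidal category, the right dual $\ob a^*$ together with its unit and counit $\eta_\ob a,\ep_\ob a$ and the braiding $\sigma$ all transport along such an equivalence, and a tensor functor out of the strict model composes with the equivalence to give one out of $\cat M$. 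So we may assume $\cat M$ is strict, in which case the universal property of the presentation applies verbatim and a strict monoidal functor is in particular a tensor functor.

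With $\cat M$ strict, set $F(\up)=\ob a$, $F(\down)=\ob a^*$, $F(c)=\eta_\ob a$, $F(d)=\ep_\ob a$, $F(s)=\sigma_{\ob a,\ob a}$ and extend monoidally. It then remains to verify that the images of (\ref{A down})--(\ref{I}) hold in $\cat M$. The image of (\ref{A down}) is the identity $(1_\ob a\otimes\ep_\ob a)\circ(\eta_\ob a\otimes 1_\ob a)=1_\ob a$ and the image of (\ref{A up}) is $(\ep_\ob a\otimes 1_{\ob a^*})\circ(1_{\ob a^*}\otimes\eta_\ob a)=1_{\ob a^*}$; these are precisely the two defining zig-zag identities of the duality $(\ob a^*,\eta_\ob a,\ep_\ob a)$ recalled in Section~\ref{duality}. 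The image of (\ref{S}) is $\sigma_{\ob a,\ob a}\circ\sigma_{\ob a,\ob a}=1_{\ob a\otimes\ob a}$, which holds because the braiding is symmetric, so $\sigma_{\ob a,\ob a}^{-1}=\sigma_{\ob a,\ob a}$. The image of (\ref{B}) is the instance $\ob a=\ob b=\ob c$ of the Yang--Baxter equation (\ref{Yang-Baxter}), valid in any braided monoidal category. Finally, the image of the morphism in (\ref{I}) is $(\ep_\ob a\otimes 1_\ob a\otimes 1_{\ob a^*})\circ(1_{\ob a^*}\otimes\sigma_{\ob a,\ob a}\otimes 1_{\ob a^*})\circ(1_{\ob a^*}\otimes 1_\ob a\otimes\eta_\ob a)$, whose invertibility is exactly the content of Lemma~\ref{braid inverse}. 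Hence $F$ is a well-defined $\k$-linear strict monoidal functor, i.e.\ a tensor functor $\OB\to\cat M$.

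I do not expect a genuine obstacle here: the whole argument is a bookkeeping exercise once Theorem~\ref{thm1} and Lemma~\ref{braid inverse} are in hand, the former supplying the presentation and the latter disposing of the only relation, (\ref{I}), whose verification is not completely immediate. The one point that deserves a little care is the passage to the strict case, so that one may apply the universal property of the presentation directly rather than having to track associativity and unit isomorphisms through each relation; the discussion of duality in Section~\ref{duality}, which already suppresses those isomorphisms, indicates that this causes no real difficulty.
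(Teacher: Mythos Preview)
Your proof is correct and follows essentially the same approach as the paper's own proof: invoke the presentation of Theorem~\ref{thm1}, reduce to the strict case via Mac Lane's Coherence Theorem, and then verify relations (\ref{A down})--(\ref{I}) using the duality axioms, the symmetry of the braiding, the Yang--Baxter equation, and Lemma~\ref{braid inverse}, respectively. The paper's version is simply more terse.
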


\begin{proof} By Theorem~\ref{thm1} it suffices to verify that the images of (\ref{A down})-(\ref{I}) hold in $\cat M$.  By Mac Lane's Coherence Theorem we may assume $\cat M$ is a strict monoidal category.  
The images of (\ref{A down}), (\ref{A up}), and (\ref{S}) hold by the definitions of duals and symmetric braidings.  The image of (\ref{B}) follows from the Yang-Baxter equation (\ref{Yang-Baxter}).  Finally, the image of (\ref{I}) follows from Lemma \ref{braid inverse}.
\end{proof} 

\subsection{Reversed orientations}
It is obvious from the diagrammatic definition of $\OB$ that there is a self-inverse tensor functor
\begin{equation}\label{rev}
\OB \to \OB, \quad g \mapsto g'
\end{equation} 
defined by switching the objects $\up$ and $\down$
and reversing the orientations of all the strands in a diagram.
Applying this
to the generators and relations 
from Theorem~\ref{thm1}
yields an alternative presentation of $\OB$ with the following generating morphisms:   
$$\includegraphics{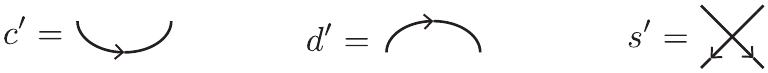}$$  
In terms of the original generating morphisms,
these alternative generators are given explicitly by 
\begin{equation}\label{primes}  c'=t\circ c,\quad d'=d\circ t,\quad s'=(d\down\down)\circ
(\down d\up\down\down)\circ(\down \down s\down\down)\circ(\down\down\up c\down)
\circ(\down\down c).
\end{equation}
The relations that they satisfy are obtained from
(\ref{A down})--(\ref{I}) by reversing all orientations:
\begin{align}\label{R1}
(\down d')\circ(c'\down)&=\down,\\
(d'\up)\circ(\up c')&=\up,\\
{s'}^2&=\down\down,\\
(\down s')\circ(s'\down)\circ(\down s')&=(s'\down)\circ(\down s')\circ (s'\down),\\
\label{R2}
(d'\down\up)\circ(\up s'\up)\circ(\up\down c')&\text{ is
  invertible.}
 \end{align}
We remark that it is not at all trivial to derive
these 
primed relations directly from (\ref{A down})--(\ref{I}) and (\ref{primes}) (i.e. without going via diagrams). 
This nicely illustrates the substance of Turaev's results exploited in the proof of Theorem \ref{thm1}.

We use the same notation $c', d', s'$ for the images in $\AOB$ of these
morphisms under the
functor $\OB \rightarrow \AOB$;
explicitly one can 
take (\ref{primes}) as the definition of $c', d', s'$ in $\AOB$. Also recall 
the morphism $x':= (d \down) \circ (\down x \down) \circ (\down c) \in \End_{\AOB}(\down)$ from the introduction.
Note that
\begin{equation}\label{more}
(\down x') \circ s' = s' \circ (x'\down) - \down\down.
\end{equation}
The sign here is different from in (\ref{AX}), making it clear that reversing orientations does {\em not} in general define a self-equivalence of $\AOB$.
Nevertheless,
the elements $c',d',s',x'$ give an alternative set of generators for $\AOB$
subject only to the relations
(\ref{R1})--(\ref{more}).

\subsection{Jucys-Murphy morphisms}\label{JM}  Given a word 
$\ob a\in\wrd$ of length $k$, and distinct integers $1 \leq p, q\leq k$, 
let $(p,q)^{\ob a} \in \End_{\OB}(\ob a)$
be the morphism whose diagram is either the crossing of the $p$th and $q$th strands
if $\ob a_p = \ob a_q$, or minus the cap-cup pair joining the $p$th and $q$th letters if $\ob a_p \neq \ob a_q$, with 
all other strands going straight through.
For example, if $\ob a=\up\down\up\down\down$ then
$$\includegraphics{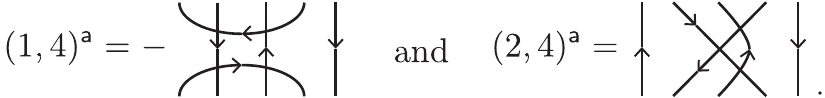}$$
Then for fixed $\ob a$ and $p$, 
the corresponding \emph{Jucys-Murphy morphism} is defined to  be 
\begin{equation*}
JM^{\ob a}_p:=\sum_{0<q<p}(p,q)^\ob a\in\End_{\OB}(\ob a).
\end{equation*} 
In particular, $JM^{\ob a}_1=0$ for all $\ob a$.

\begin{lemma}\label{JM props} Suppose $\ob a, \ob b,\ob c, \ob d\in\wrd$ and let $k,l$ denote the lengths of the words $\ob a,\ob c$ respectively.
\begin{itemize}
\item[(i)] $(g\up h)\circ JM_{k+1}^{\ob a\up\ob b}=JM_{l+1}^{\ob c\up\ob d}\circ(g\up h)$ for any morphisms $g:\ob a\to\ob c$ and $h:\ob b\to\ob d$.  
\item[(ii)] $JM_{k+2}^{\ob a\up\up\ob b}\circ(\ob a\,s\,\ob b)=(\ob a\,s\,\ob b)\circ JM_{k+1}^{\ob a\up\up\ob b}+\ob a\up\up\ob b$.
\end{itemize}
\end{lemma}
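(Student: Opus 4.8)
The plan is to prove both parts by direct diagrammatic manipulation, working in $\OB$ for part (i) and then bootstrapping to part (ii), which also involves the extra generator $x$ but only through the relation (\ref{AX}). Recall that $JM^{\ob a}_p$ is a sum over crossings/cap-cup terms $(p,q)^{\ob a}$. For part (i), I would first reduce to the case where $g$ and $h$ are single elementary generators (i.e.\ of the form $\ob u\, c\, \ob v$, $\ob u\, d\, \ob v$, $\ob u\, s\, \ob v$, or $\ob u\, t\, \ob v$) sandwiched between identities, since a general morphism is a composite of these and the claimed identity is multiplicative in $g \up h$ in the obvious way (compose the intertwining relations). Then the key point is purely local: the generator $g$ acts on some consecutive block of strands of $\ob a$ (resp.\ $\ob c$) that does \emph{not} include the distinguished $\up$ in position $k+1$, and likewise $h$ acts on a block inside $\ob b$ (resp.\ $\ob d$); I must check that summing $(k+1,q)^{\ob a \up \ob b}$ over all $q < k+1$ produces something that slides past $g \up h$ to become the sum of $(l+1,q)^{\ob c \up \ob d}$ over $q < l+1$.

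For this local check I would split the sum $JM^{\ob a \up \ob b}_{k+1}$ into the terms $(k+1,q)$ with $q$ indexing a strand untouched by $g$ — these trivially commute past $g \up h$ and match up with the corresponding term on the other side — and the terms $(k+1,q)$ with $q$ in the block where $g$ acts. For those, the combined morphism $(g \up h)\circ \sum_{q \in \text{block}} (k+1,q)^{\ob a \up \ob b}$ and $\bigl(\sum_{q' \in \text{block}}(l+1,q')^{\ob c \up \ob d}\bigr)\circ (g \up h)$ must agree; when $g = s$ this is essentially the braid relation (\ref{B}) applied to the triple (the $\up$ strand in position $k+1$, and the two strands $s$ swaps), and when $g$ is a $c$, $d$, or $t$ it reduces to naturality of the braiding together with the duality relations (\ref{A down})--(\ref{A up}) — precisely the kind of strand-sliding identities that hold in any symmetric monoidal category, so one could alternatively invoke Corollary~\ref{OB functor} and argue in $\uRep_0(GL_\delta)$ or abstractly via the braiding axioms. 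The cleanest route is probably to observe that $JM^{\ob a}_p$ can be rewritten using $t$ and $s$ as (iterated) partial braidings, so that (i) becomes an instance of the naturality square for $\sigma$.

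Part (ii) follows from part (i) together with the defining relation (\ref{AX}) (equivalently its diagrammatic incarnation, the local dot-past-crossing moves). Write $JM^{\ob a \up\up \ob b}_{k+2} = (k+2,k+1)^{\ob a\up\up\ob b} + \sum_{0 < q \leq k}(k+2,q)^{\ob a\up\up\ob b}$, and note $(k+2,k+1)^{\ob a\up\up\ob b} = \ob a\, s\, \ob b$. Conjugating the tail sum $\sum_{q \leq k}(k+2,q)^{\ob a\up\up\ob b}$ by $\ob a\, s\, \ob b$ turns it into $\sum_{q\leq k}(k+1,q)^{\ob a\up\up\ob b}$ by part (i) (applied with the single generator $g = \ob a\, s$, $h = \ob b$, and bearing in mind $s$ swaps the two $\up$'s so that the former position $k+2$ strand becomes the position $k+1$ strand), while the $(k+2,k+1)$ term gives $(\ob a\,s\,\ob b)\circ(\ob a\,s\,\ob b) = \ob a\up\up\ob b$ using (\ref{S}); assembling, $JM^{\ob a\up\up\ob b}_{k+2}\circ(\ob a\,s\,\ob b) = (\ob a\,s\,\ob b)\circ\bigl(\ob a\,s\,\ob b + \sum_{q\leq k}(k+1,q)^{\ob a\up\up\ob b}\bigr) = (\ob a\,s\,\ob b)\circ JM^{\ob a\up\up\ob b}_{k+1} + \ob a\up\up\ob b$, as claimed. (Here I am implicitly using that $JM^{\ob a\up\up\ob b}_{k+1}$ does not involve the position-$k+2$ strand, so it is unaffected by $s$ on the right — this is where a small amount of care with indexing is needed.)

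The main obstacle will be the bookkeeping of positions and the case analysis in part (i): making precise ``the block where $g$ acts'' and verifying the match of summands when $g = s$ without the Yang--Baxter manipulation becoming unwieldy. I expect the honest proof to either carry out these local moves pictorially (which is short once drawn) or to reduce the whole statement to naturality of the symmetric braiding by re-expressing $JM$-morphisms in terms of $s$ and $t$; the latter is more robust but requires first establishing the rewriting $JM^{\ob a}_p = \sum_{q<p}(p,q)^{\ob a}$ in braiding terms, a routine but slightly fiddly induction on $p$.
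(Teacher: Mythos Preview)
Your approach matches the paper's, which leaves the lemma as an exercise with only the hint that for (i) one may reduce via Theorem~\ref{thm1} to the case where $g$ (and similarly $h$) is a tensor product of identities with a single $c$, $d$, $s$, or $t$. Your reduction-to-generators and local-check strategy for (i) is exactly this.

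Two corrections to your treatment of (ii). First, the relation (\ref{AX}) plays no role here: the lemma lives entirely in $\OB$, there are no dots anywhere, and indeed your own argument never actually invokes (\ref{AX}) --- only (\ref{S}). Second, the key identity you need,
\[
\sum_{q\leq k}(k+2,q)^{\ob a\up\up\ob b}\circ(\ob a\,s\,\ob b)
=(\ob a\,s\,\ob b)\circ\sum_{q\leq k}(k+1,q)^{\ob a\up\up\ob b},
\]
is \emph{not} an instance of part (i). In (i) the morphism has the form $g\up h$ and must leave the distinguished $\up$ strand untouched, whereas $\ob a\,s\,\ob b$ acts on both positions $k+1$ and $k+2$; neither choice of distinguished position fits the hypothesis, and your suggested instantiation ``$g=\ob a\,s$, $h=\ob b$'' would place the distinguished $\up$ at position $k+3$, which is not what you want. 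The identity is nonetheless true term-by-term for each $q\leq k$ by a one-step diagram equivalence (slide the crossing or cap--cup joining $q$ and $k+2$ past the $s$; this is just naturality of the symmetric braiding). With that correction your computation for (ii) goes through as written.
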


\begin{proof}
We leave this as an exercise for the reader. Note by Theorem \ref{thm1} that one only needs to verify part (i) for those $g$ which are tensor products of identity morphisms and a single $c, d, s$ or $t$.
\end{proof}

The following theorem is an easy consequence of Theorems~\ref{thm2} and \ref{thm3} from the introduction, and actually it will never be needed in our
proofs of those results.
Nevertheless we include a self-contained proof right away since we found it to be quite instructive.

\begin{theorem}
\label{level one} Suppose that $f(u)=u-m \in \k[u]$ is monic of degree one.
Then the
functor $\OB \rightarrow \COB^f$
defined as the composite
first of the 
functor $\OB \rightarrow \AOB$
then the quotient functor $\AOB \to \COB^f$
is an isomorphism.
\end{theorem}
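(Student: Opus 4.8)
The plan is to exhibit a two-sided inverse to the composite functor $\OB \to \COB^f$. Write $\Phi : \OB \to \COB^f$ for this composite. Since $\OB$ and $\COB^f$ have the same object set $\wrd$ and $\Phi$ is the identity on objects, it suffices to construct a functor $\Psi : \COB^f \to \OB$ which is the identity on objects and satisfies $\Psi\circ\Phi = \id$ and $\Phi\circ\Psi = \id$. To build $\Psi$, I would use the presentation of $\AOB$ by generators $c,d,s,x$ and relations \eqref{A down}--\eqref{I} together with \eqref{AX}: define $\Psi'$ on $\AOB$ by sending $c,d,s$ to the generators of $\OB$ of the same name, and sending $x$ to the Jucys--Murphy-type endomorphism $m\cdot\up \in \End_{\OB}(\up)$ (i.e. $m$ times the identity strand). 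One must check that this respects all the relations. Relations \eqref{A down}--\eqref{I} are automatic since $c,d,s$ go to themselves. For \eqref{AX}, both $\up x$ and $x\up$ map to $m$ times an identity, so $(\up x)\circ s = s\circ(x\up) + \up\up$ maps to $ms = ms + \up\up$, which forces us to recall that we are working modulo the cyclotomic relation: the image has to live in $\COB^f$, not $\OB$. So in fact $\Psi'$ does \emph{not} descend to $\OB$ directly; instead one sends $x \mapsto JM^{\up}_1 + m\cdot\up = m\cdot\up$ only after first passing to a suitable setting. Let me restructure.

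The cleaner approach: the category $\COB^f$ for $f(u)=u-m$ is the quotient of $\AOB$ by the right tensor ideal generated by $x - m\cdot\up$. So I would directly check that $\OB$, equipped with the morphisms $c,d,s$ and the extra endomorphism $\tilde x := m\cdot \up$ of $\up$, satisfies the relations defining $\AOB$ \emph{modulo} setting $\tilde x = m$: concretely, relation \eqref{AX} becomes $(\up \tilde x)\circ s = s\circ(\tilde x\up)+\up\up$, i.e. $ms = ms + \up\up$, which is \emph{false} in $\OB$. This tells me the naive guess $x\mapsto m\cdot\up$ is wrong, and that the correct assignment must use a genuinely non-scalar endomorphism. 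The right choice is dictated by Lemma~\ref{JM props}: one should send $x$ acting ``in position $k+1$'' of a word $\ob a\up\ob b$ to $m\cdot(\ob a\up\ob b) + JM^{\ob a\up\ob b}_{k+1}$, because Lemma~\ref{JM props}(ii) says exactly that these Jucys--Murphy morphisms satisfy the defect relation \eqref{AX}. So I define $\Psi : \COB^f \to \OB$ on the generating morphisms $\widehat G$ (in the sense of Section~\ref{g-r}) by $1_{\ob a}\otimes c\otimes 1_{\ob b}\mapsto 1_{\ob a}\otimes c\otimes 1_{\ob b}$, similarly for $d,s,t$, and $1_{\ob a}\otimes x\otimes 1_{\ob b}\mapsto m\cdot(\ob a\up\ob b) + JM^{\ob a\up\ob b}_{|\ob a|+1}$. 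I must then check: (a) this respects the relations $\widehat R$ of $\AOB$ as a $\k$-linear category — the commuting relations \eqref{comm relations} follow from Lemma~\ref{JM props}(i), relation \eqref{AX} follows from Lemma~\ref{JM props}(ii), and the $\OB$-relations are automatic; (b) the cyclotomic relation: $f(x)=x-m$ maps to $JM^{\up}_1 = 0$ (using $JM^{\ob a}_1=0$), so the right tensor ideal generated by $f(x)$ is killed, hence $\Psi$ descends to $\COB^f$.

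Finally I would verify $\Psi\circ\Phi=\id_{\OB}$: this is immediate since $\Phi$ sends each generator $c,d,s$ of $\OB$ to the same-named generator of $\COB^f$, which $\Psi$ sends back to itself. For $\Phi\circ\Psi=\id_{\COB^f}$: $\Phi\Psi$ fixes $c,d,s,t$, so I only need $(\Phi\Psi)(1_{\ob a}\otimes x\otimes 1_{\ob b}) = 1_{\ob a}\otimes x\otimes 1_{\ob b}$ in $\COB^f$, i.e. that $1_{\ob a}\otimes x\otimes 1_{\ob b} = m\cdot(\ob a\up\ob b) + JM^{\ob a\up\ob b}_{|\ob a|+1}$ holds in $\COB^f$. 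This is the heart of the matter and the step I expect to be the main obstacle: it amounts to showing that, modulo the cyclotomic ideal for $f(u)=u-m$, every ``dot'' in every position can be rewritten as $m$ plus a sum of crossings/caps-cups. I would prove it by induction on the position index $|\ob a|+1$: the base case $|\ob a|=0$ is precisely the defining relation $x = m\cdot\up$ in $\COB^f$ (since $JM^{\up}_1=0$), and the inductive step moves the dot one strand to the right past a crossing using the diagrammatic form of \eqref{AX} (the two local relations displayed before the definition of normally ordered diagrams), converting the change in dot position into exactly the extra transposition term appearing in $JM^{\ob a\up\ob b}_{|\ob a|+1} - JM^{\ob a'\up\ob b'}_{|\ob a|}$, matching Lemma~\ref{JM props}(ii). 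One also has to handle the case where the strand immediately to the left is oriented $\down$, where \eqref{AX} is replaced by the cap-cup version; here the contribution is $-$(cap-cup), consistent with the sign convention built into $(p,q)^{\ob a}$. Once this identity is established in $\COB^f$, both composites are the identity and $\Phi$ is an isomorphism.
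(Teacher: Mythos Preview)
Your approach is essentially the same as the paper's: both define the inverse functor by sending $\ob a\,x\,\ob b$ to $m\cdot(\ob a\up\ob b)+JM_{|\ob a|+1}^{\ob a\up\ob b}$, verify well-definedness using Lemma~\ref{JM props}(i) for the commuting relations and Lemma~\ref{JM props}(ii) for \eqref{AX}, and use $JM_1=0$ for the cyclotomic relation.

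The only organizational difference is that the paper does not bother proving $\Phi\circ\Psi=\id$. Instead it first observes directly that $\Phi$ is full (any dot can be pushed to the far left by repeatedly applying \eqref{AX} and its variants, and then killed by the relation $x\,\ob a=m(\up\ob a)$), and then uses $\Psi\circ\Phi=\id$ to get faithfulness. This is slightly more economical than your inductive argument for $\Phi\circ\Psi=\id$, whose $\down$-strand case you correctly flag as needing a separate cap-cup calculation; the paper sidesteps that entirely.
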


\begin{proof}
It is easy to see that the morphism spaces in $\COB^f$ are spanned by all oriented Brauer diagrams with bubbles (no dots). 
For example:
$$\includegraphics{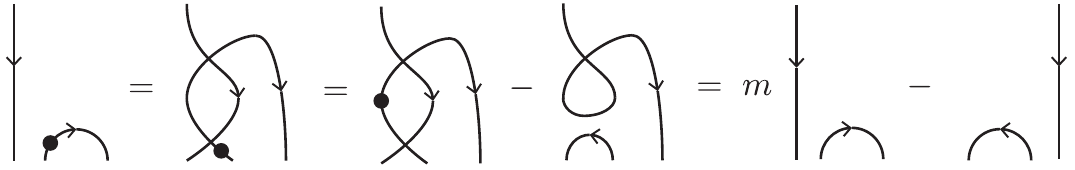}$$
Hence the given composite functor $G:\OB \rightarrow \COB^f$ is full.

To show that it is faithful, we will construct a functor $F:\COB^f\to \OB$ such that
$F \circ G = \operatorname{Id}$. This depends on a description of the $\k$-linear category $\COB^f$ via generators and relations.  
To get this, we first
forget the monoidal structure to get a description of $\AOB$ as a $\k$-linear category via generators and relations as explained in \S\ref{g-r}.  Doing so yields the following set of generating morphisms for $\AOB$: $\ob a\,c\,\ob b, \ob a\,d\,\ob b, \ob a\,s\,\ob b, \ob a\,t\,\ob b$, $\ob a\,x\,\ob b$ where $\ob a$ and $\ob b$ range over all words in $\wrd$.  This set also generates the morphisms in $\COB^f$.  A full set of relations for $\COB^f$ is obtained from the relations for the $\k$-linear category $\AOB$ by adding 
\begin{equation}\label{level one relation}
x\,\ob a=m(\up\ob a)\quad\text{for all }\ob{a}\in\wrd.
\end{equation}
Now, it is clear how to define the desired functor $F$  on all generators except those involving $x$; for them we
set $F(\ob a\,x\,\ob b):=JM_{k+1}^{\ob a\up\ob b} +m(\ob a\up\ob b)$ where $k$ denotes the length of $\ob a$.  To see that this is well defined we must
show that $F$ preserves the relations for $\COB^f$. Again this is obvious for relations not involving $x$.  Then part (i) of Lemma \ref{JM props} guarantees that $F$ preserves all commuting relations involving $x$.  
Also $F$ preserves (\ref{level one relation}) 
since $JM_1^{\up\ob a}=0$ for all $\ob a\in\wrd$.  The only relations left to check are 
\begin{equation}\label{AXdumb} (\ob a\up x\,\ob b)\circ (\ob a\,s\,\ob b)=(\ob a\,s\,\ob b)\circ(\ob a\,x\up\ob b)+\ob a\up\up\ob b\quad\text{for all }\ob{a,b}\in\wrd.
\end{equation}  
These follow by Lemma~\ref{JM props}(ii).
\end{proof}

\begin{corollary}
The functor $\OB \rightarrow \AOB$ is faithful.
\end{corollary}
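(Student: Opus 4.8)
The plan is to deduce this at once from Theorem~\ref{level one}, with no further computation. Fix any monic polynomial of degree one, say $f(u) = u \in \k[u]$. By Theorem~\ref{level one} the composite
$$
\OB \longrightarrow \AOB \longrightarrow \COB^f,
$$
in which the first arrow is the functor under consideration and the second is the quotient functor $\AOB \to \COB^f$, is an isomorphism of $\k$-linear categories. In particular, for each pair $\ob a, \ob b \in \wrd$ it restricts to a $\k$-module isomorphism $\Hom_{\OB}(\ob a, \ob b) \to \Hom_{\COB^f}(\ob a, \ob b)$, hence is injective on that morphism space.

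From this the faithfulness of $\OB \to \AOB$ is purely formal: if $\phi, \psi \in \Hom_{\OB}(\ob a, \ob b)$ have equal images in $\AOB$, then applying the quotient functor shows they have equal images in $\COB^f$, so by the injectivity just noted $\phi = \psi$. The only point worth recording is that although we do not (yet) have enough information to verify faithfulness of $\OB \to \AOB$ directly, the level-one case already forces it, since a composite of functors that is injective on morphism spaces has its first factor injective on morphism spaces. I do not anticipate any obstacle here; the argument is a one-line consequence of Theorem~\ref{level one} once that theorem is in hand.
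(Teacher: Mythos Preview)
Your argument is correct and is precisely the intended one: the paper states this corollary immediately after Theorem~\ref{level one} with no proof, so it is meant to follow exactly by the observation you make, namely that since the composite $\OB \to \AOB \to \COB^f$ is an isomorphism its first factor must be faithful.
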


\subsection{Associated graded categories}\label{GOB}
There is one more relevant category of oriented Brauer diagrams which,
like $\OB$, is most easily defined 
diagrammatically from the outset.
The {\em graded oriented Brauer category} $\GOB$ is the monoidal category
with objects $\wrd$ and morphisms
$\Hom_{\GOB}(\ob a, \ob b)$
consisting of $\k$-linear combinations of 
equivalence classes of normally ordered 
dotted oriented Brauer diagrams with bubbles of type $\ob a \rightarrow \ob b$.
The rules for vertical and horizontal composition are by stacking diagrams as usual. Both 
may produce 
diagrams that are no longer normally ordered;
to convert the resulting diagrams into normally ordered ones, it is now
permissible to move dots past crossings; so one
simply 
translates all (possibly dotted) bubbles into clockwise ones at the left edge,
and slides all other
dots past crossings so that they are on outward-pointing segments.
For example, the first dotted oriented Brauer diagram displayed in the introduction gets transformed directly in this way into the second (normally ordered) one.
It is quite obvious that the compositions defined in this way are associative,
and make $\GOB$ into a well-defined $\k$-linear monoidal category.
It is also rigid and symmetric, with duals and braiding defined 
in the same way as we did for the category $\OB$ in the introduction.

\begin{theorem}\label{GOB  g-r} As a $\k$-linear  monoidal category, $\GOB$ is generated by objects $\up, \down$ and morphisms $c, d, s, x$ subject only to the  relations (\ref{A down})--(\ref{I}) plus
\begin{equation}\label{GX}
(\up x)\circ s=s\circ(x\up).
\end{equation}
\end{theorem}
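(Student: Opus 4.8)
The plan is to exhibit mutually inverse monoidal functors between $\GOB$ and the $\k$-linear monoidal category $\cat P$ presented by objects $\up,\down$ and morphisms $c,d,s,x$ subject to (\ref{A down})--(\ref{I}) and (\ref{GX}). First I would check that $\GOB$ satisfies all these relations: the relations (\ref{A down})--(\ref{I}) hold because $\GOB$ contains $\OB$ as a monoidal subcategory (diagrams with no dots and no bubbles), using Theorem~\ref{thm1}; and (\ref{GX}) holds by inspection, since in $\GOB$ one \emph{is} allowed to slide dots freely past crossings when renormalizing, so $(\up x)\circ s$ and $s\circ(x\up)$ have the same normal form. This gives a monoidal functor $\Phi:\cat P\to\GOB$ sending each generator to the diagram of the same name.

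Next I would construct a monoidal functor $\Psi:\GOB\to\cat P$. The natural definition is to send a normally ordered dotted oriented Brauer diagram with bubbles to the corresponding composite of elementary morphisms $\ob a\,c\,\ob b$, $\ob a\,d\,\ob b$, $\ob a\,s\,\ob b$, $\ob a\,t\,\ob b$, $\ob a\,x\,\ob b$ in $\cat P$ --- exactly as one interprets such a diagram as a morphism in $\AOB$, since $\cat P$ is the quotient of (a presentation equivalent to) $\AOB$ by forcing $x$ to commute with $s$ instead of (\ref{AX}). The content is that this is well defined, i.e. independent of how the diagram is written as a composite of elementaries and invariant under the equivalence relation on normally ordered diagrams. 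For this I would work with the $\k$-linear (non-monoidal) presentation of $\cat P$ from \S\ref{g-r}: the generators $\widehat G$ together with the relations (\ref{1 relations}), the commuting relations (\ref{comm relations}), and the single family coming from (\ref{GX}). Two composites of elementaries representing equivalent normally ordered diagrams differ by a sequence of local moves --- planar isotopies (handled by the images of (\ref{A down})--(\ref{I}) and the commuting relations, just as in $\OB$), dot slides past \emph{non}-crossings (trivial), and --- crucially --- the fact that in a normally ordered diagram there is essentially a unique way to present it, once one fixes conventions, so that two presentations of the \emph{same} normally ordered diagram differ only by the "obviously harmless" moves. I would make this precise by choosing a canonical factorization of each normally ordered diagram (e.g. read off the permutation/cap-cup data, place all crossings and cap-cups in a fixed order, then attach dots on the outward segments and bubbles at the far left) and showing every other factorization can be transformed to the canonical one using the relations of $\cat P$.

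Once both functors are defined it is routine that $\Psi\circ\Phi=\mathrm{Id}_{\cat P}$ on generators (hence everywhere) and that $\Phi\circ\Psi=\mathrm{Id}_{\GOB}$, since $\Phi\circ\Psi$ sends a normally ordered diagram to its own normal form; monoidality of $\Psi$ follows because horizontal stacking of diagrams corresponds to the tensor product of the chosen factorizations up to the commuting relations. The main obstacle is the well-definedness of $\Psi$: one must verify that the renormalization procedure defining composition in $\GOB$ (translate bubbles to clockwise ones at the left edge, slide all dots onto outward-pointing segments) is mirrored by genuine identities in $\cat P$ --- in particular that every clockwise-vs-counterclockwise bubble conversion and every sequence of dot-slides-past-crossings needed to reach normal form is a consequence of (\ref{A down})--(\ref{I}) and (\ref{GX}). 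This is the graded analogue of the bubble calculus in Remark~\ref{rem1} (with the $\Z$-grading by number of dots making the right-hand side of (\ref{AX}) vanish), and I expect it to require a careful but ultimately straightforward diagram-chase, organized by first reducing to the case of a single crossing or a single bubble and then invoking the relations of $\cat P$ directly.
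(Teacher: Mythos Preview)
Your approach is correct but more elaborate than the paper's. The paper defines only the one functor $F:\cat P\to\GOB$ (your $\Phi$), checks it is bijective on objects and full (since every dotted diagram factors into elementaries), and then for faithfulness avoids constructing an inverse $\Psi$ altogether. Instead it argues that the normally ordered diagrams, viewed as morphisms in $\cat P$, already \emph{span} the morphism spaces of $\cat P$: by Theorem~\ref{thm1} the undotted relations suffice to put the underlying Brauer diagram into any desired shape, and then (\ref{GX}) together with its conjugate $(x\up)\circ s=s\circ(\up x)$ (obtained by composing with $s$ on top and bottom) lets one slide all dots to outward-pointing segments. Since $F$ sends this spanning set to the defining basis of $\GOB$, a surjection carrying a spanning set to a basis must be a bijection, and $F$ is an isomorphism.

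The ``main obstacle'' you identify --- that every renormalization move used to define composition in $\GOB$ is mirrored by a genuine identity in $\cat P$ --- is precisely the content of this spanning check, so the mathematical core of the two arguments is the same. What your route costs is the extra bookkeeping of verifying that $\Psi$ is a well-defined \emph{functor} (respecting composition and tensor product), which amounts to redoing that spanning argument inside a more rigid framework; the paper's linear-algebra shortcut dispenses with this. What your route would buy, in principle, is an explicit inverse, but since $\GOB$ is defined diagrammatically with the normally ordered diagrams as a basis, that inverse is already implicit and not needed elsewhere.
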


\begin{proof} Let $\cat{C}$ denote the category defined by the generators and relations in the statement of the theorem.  Identifying 
$c,d,s,t,x$ with the morphisms in $\GOB$ associated  to  
the (by now) familiar diagrams, it is clear that
the relations (\ref{A down})--(\ref{I}) and (\ref{GX}) hold in $\GOB$.
Hence we have a functor $F:\cat{C}\to \GOB$.  This functor is bijective on objects. As noted already in the introduction,
every dotted oriented Brauer diagram with bubbles is equivalent to a vertical 
composition of diagrams of the form 
$\ob a \, s\, \ob b$,
$\ob a \, t\, \ob b$,
$\ob a \, c\, \ob b$,
$\ob a \, d\, \ob b$,
$\ob a \, x\, \ob b$.
Hence $F$ is full.
Finally to see that $F$ is an isomorphism it remains to show that we have enough relations in $\cat{C}$.
In view of Theorem~\ref{thm1}, this amounts to checking
that there are enough relations in $\cat{C}$ to move dots past crossings.
This follows by (\ref{GX}) and the relation obtained from that by composing with $s$ on the top and bottom.
\end{proof}

\begin{corollary}\label{GOB functor} Suppose $\cat M$ is a symmetric monoidal category, $\ob a$ is an object of $\cat M$ possessing a right dual $\ob a^*$,
and $g\in\End_\cat M(\ob a)$.
Then the assignment on objects $\up\mapsto\ob a$, $\down\mapsto\ob a^*$ and morphisms $c\mapsto\eta_\ob a$, $d\mapsto\ep_\ob a$, $s\mapsto\sigma_{\ob a, \ob a}$, $x\mapsto g$ prescribes a tensor functor $\GOB\to\cat M$.
\end{corollary}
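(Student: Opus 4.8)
The plan is to deduce this corollary from Theorem~\ref{GOB g-r} in exactly the same way that Corollary~\ref{OB functor} was deduced from Theorem~\ref{thm1}. By Theorem~\ref{GOB g-r}, the category $\GOB$ is presented as a $\k$-linear monoidal category by the objects $\up,\down$, the generating morphisms $c,d,s,x$, and the relations (\ref{A down})--(\ref{I}) together with (\ref{GX}). Hence to produce the asserted tensor functor $\GOB \to \cat M$ it suffices to check that, under the proposed assignment on objects and generators, the images of all these defining relations hold in $\cat M$.

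First I would invoke Mac Lane's Coherence Theorem to reduce to the case that $\cat M$ is strict. Then the images of (\ref{A down}), (\ref{A up}), (\ref{S}) hold by the triangle identities for the duality data $(\eta_\ob a, \ep_\ob a)$ and by $\sigma_{\ob a,\ob a}^{-1} = \sigma_{\ob a,\ob a}$ (symmetry); the image of (\ref{B}) is the Yang--Baxter equation (\ref{Yang-Baxter}); and the image of (\ref{I}) is precisely the invertibility asserted in Lemma~\ref{braid inverse}. All of this is already contained verbatim in the proof of Corollary~\ref{OB functor}, so I would just cite that proof for these relations. The only genuinely new thing to verify is the image of the extra relation (\ref{GX}), namely that
\begin{equation*}
(1_\ob a \otimes g)\circ \sigma_{\ob a,\ob a} = \sigma_{\ob a,\ob a}\circ(g \otimes 1_\ob a)
\end{equation*}
as morphisms $\ob a \otimes \ob a \to \ob a \otimes \ob a$. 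But this is immediate from the naturality of the braiding $\sigma$ applied to the morphism $g:\ob a \to \ob a$: naturality says $\sigma_{\ob a,\ob a}\circ(g\otimes 1_\ob a) = (1_\ob a \otimes g)\circ \sigma_{\ob a,\ob a}$, which is exactly the required identity.

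Since every defining relation of the presentation of $\GOB$ is satisfied by the images of the generators, the assignment extends uniquely to a $\k$-linear monoidal functor $\GOB \to \cat M$, which by construction is a tensor (strong monoidal) functor because $\GOB$ is strict and the assignment respects the monoidal structure on objects. I do not expect any real obstacle here: the entire content is the observation that the one new relation (\ref{GX}) is nothing more than naturality of the symmetric braiding, and everything else is a reference to the proof of Corollary~\ref{OB functor}.
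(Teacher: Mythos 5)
Your proposal is correct and matches the paper's own proof: both reduce via Theorem~\ref{GOB g-r} to checking relations, invoke the argument of Corollary~\ref{OB functor} for (\ref{A down})--(\ref{I}), and verify the new relation (\ref{GX}) as an instance of naturality of the braiding $\sigma$.
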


\begin{proof} 
By Theorem~\ref{GOB g-r} and Corollary~\ref{OB functor}, it suffices to show the image of (\ref{GX}) holds in $\cat M$, which follows from the fact that $\sigma$ is a natural transformation.  
\end{proof}

Let $\cat{F}$ denote the free $\k$-linear monoidal category generated by objects $\up$ and $\down$ and morphisms $c:\varnothing\to\up\down$, $d:\down\up\to\varnothing$, $s:\up\up\to\up\up$, 
$t:\up\down\to\down\up$
and  $x:\up\to\up$.  We put a monoidal grading  on $\cat{F}$ by setting $c,d,s$ and $t$ in degree 0 and $x$ in degree 1 (see \S\ref{filters}). Since the relations (\ref{A down})--(\ref{I}) and (\ref{GX}) are all homogenous with respect to this grading, Theorem \ref{GOB g-r} implies that $\GOB$ inherits a grading from $\cat{F}$ in which 
 the degree of each dotted diagram is the number of dots in that diagram.  On the other hand, since (\ref{AX}) is not homogeneous, $\AOB$ merely inherits a filtration from $\cat{F}$.  Since (\ref{AX}) reduces to (\ref{GX}) in the associated graded category $\grAOB$, there is a graded tensor functor 
\begin{equation}\label{GOB to gr AOB}
\Theta:\GOB\to \grAOB.
\end{equation}
The filtered degree $i$ part of each morphism space in $\AOB$ is spanned by the morphisms arising from
normally ordered dotted oriented Brauer diagrams with bubbles having at most $i$ dots. This makes it clear that the functor $\Theta$ is full.
Since the normally ordered diagrams are linearly independent in $\GOB$
by its definition, we see further that 
Theorem~\ref{thm2} from the introduction is equivalent to the following.

\begin{theorem}\label{12prime}
The functor $\Theta:\GOB \to \grAOB$ is an isomorphism.
\end{theorem}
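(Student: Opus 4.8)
The plan is to prove that the graded tensor functor $\Theta:\GOB \to \grAOB$ is an isomorphism. Since it is already full (by the remarks preceding the statement) and bijective on objects, the entire content is \emph{faithfulness}: we must show that the normally ordered dotted oriented Brauer diagrams with bubbles, which by construction form a basis of each $\Hom$-space in $\GOB$, remain linearly independent after applying $\Theta$, i.e. in $\grAOB$ and hence in $\AOB$. Equivalently, writing things filtration-wise, we must show that for each pair $\ob a, \ob b \in \wrd$ the span of the normally ordered diagrams with bubbles having at most $i$ dots has $\k$-rank (over the appropriate parameter ring) equal to the number of such diagrams. This is the linear-independence step the introduction flags as the crux.

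The approach I would take is to deduce this from the cyclotomic version, Theorem~\ref{thm3} (equivalently the statement that $\AOB$'s level-$\ell$ analogue $\TOB^\ell$ is a filtered deformation), taking $\ell$ large. Concretely: fix $\ob a, \ob b$ and a bound $i$ on the number of dots; choose $\ell > i$. In $\COB^f$ for a generic monic $f$ of degree $\ell$, the normally ordered diagrams with at most $\ell - 1 \geq i$ dots per strand form a basis by Theorem~\ref{thm3}. The quotient functor $\AOB \to \COB^f$ sends the set of normally ordered diagrams of type $\ob a\to\ob b$ with at most $i$ dots total (hence at most $i \leq \ell-1$ dots on any single strand, and with bubbles handled via the $\Delta_1,\dots,\Delta_\ell$ specialization) injectively into a basis — so any nontrivial linear relation among them in $\AOB$ would survive to $\COB^f$, a contradiction. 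One must be slightly careful about bubbles: normally ordered bubbles with $\geq \ell$ dots get rewritten in $\COB^f$ in terms of $\Delta_1,\dots,\Delta_\ell$, so I would either restrict attention to diagrams with few enough dots on bubbles as well (automatic since total dots $\leq i < \ell$), or argue directly that the specialization map on $\End(\varnothing) = \k[\Delta_1,\Delta_2,\dots]$ is compatible. Passing to the associated graded then gives injectivity on the degree-$i$ piece of $\GOB$, and since $i$ and $\ob a,\ob b$ were arbitrary, $\Theta$ is faithful, hence an isomorphism.

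The main obstacle is that this reduces Theorem~\ref{12prime} to Theorem~\ref{thm3}, which is precisely the hard theorem whose proof occupies the body of the paper — so in the logical order of the paper one cannot actually cite it here without circularity. The honest plan, then, is the reverse dependency: prove Theorem~\ref{thm3} first (via the explicit representations of $\COB^f(\delta_1,\dots,\delta_\ell)$ for special parameters, then a density argument, as sketched in the suppressed ``Proof strategy'' section), and only afterwards observe that Theorem~\ref{12prime} — equivalently Theorem~\ref{thm2} — follows by the ``$\ell$ large'' specialization argument above. So the genuinely hard work is entirely inside Theorem~\ref{thm3}: constructing faithful enough modules for the specialized cyclotomic categories, checking the twenty-plus relations on them by hand, verifying that these modules are filtered so that their associated graded modules detect all the normally ordered basis diagrams, and confirming that there are enough special parameter values for a Zariski-density / base-change argument over the integral domain $\k$ to transfer linear independence back to $\COB^f$ (and thence to $\AOB$ and $\GOB$) in full generality.

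I would also record the small bookkeeping points needed to make the reduction airtight: (a) $\Theta$ is a functor of graded \emph{tensor} categories, so it suffices to check faithfulness on each $\Hom(\ob a,\ob b)$; (b) the filtration on $\AOB$ inherited from the free category $\cat F$ has its degree-$i$ part spanned by normally ordered diagrams with $\leq i$ dots, so $\gr_i \AOB(\ob a,\ob b)$ is a quotient of the degree-$i$ part of $\GOB(\ob a,\ob b)$ via $\Theta$, and equality of these (over $\k[\Delta_1,\Delta_2,\dots]$, or after specialization) is exactly what injectivity into $\COB^f$ delivers; (c) since $\k$ is an integral domain and all the modules in question are free, it is enough to establish linear independence after base change to the fraction field and then to an algebraically closed field of characteristic zero, which is where the explicit representations live. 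With those reductions in place, Theorem~\ref{12prime} is immediate once Theorem~\ref{thm3} is known.
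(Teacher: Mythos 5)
Your proposal is correct and takes essentially the same approach as the paper: Theorem~\ref{thm3} is proved first (via the explicit representations and a density/base-change argument), and then Theorem~\ref{12prime} (equivalently Theorem~\ref{thm2}) follows by choosing $\ell$ large enough and applying the quotient functor $\AOB\to\COB^f$, exactly as you describe. Your worry about circularity is not an issue in the paper's logical ordering—Theorem~\ref{thm3} is indeed established first in Section~5—and your resolution of it coincides with what the authors actually do.
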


There is also a natural candidate for the associated graded category
to $\COB^f$, assuming now that $f(u) \in \k[u]$ is monic of degree $\ell$.
Consider the tensor ideal in $\GOB$ generated by $x^\ell$.  Its morphism spaces are spanned by diagrams in which at least one strand has at least $ \ell$ dots.  The {\em graded oriented Brauer category of level $\ell$} is the quotient $\TOB^\ell$ of $\GOB$ by this tensor ideal.  Normally ordered diagrams with fewer than $\ell$ dots on each strand give a basis for each Hom space in $\TOB^\ell$.
Moreover, since $\GOB$ is symmetric, $\TOB^\ell$ inherits the structure of a rigid symmetric monoidal category from $\GOB$ (see \S\ref{ideals}).
The truncation $\TOB^\ell$ inherits a grading from $\GOB$,
while
$\COB^f$ inherits a filtration from $\AOB$.  
Moreover the functor $\Theta$ induces a functor 
 \begin{equation}\label{G to grOB_f}\Theta^f: \TOB^\ell\to \grOB^f.
\end{equation}
Like in the previous paragraph, this functor is full,
and Theorem~\ref{thm3} from the introduction is equivalent to the following.

\begin{theorem}\label{15prime}
The functor $\Theta^f:\TOB^\ell \to \grOB^f$ is an isomorphism.
\end{theorem}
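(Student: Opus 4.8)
\textbf{Proof proposal for Theorem~\ref{15prime}.}

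The plan is to prove that $\Theta^f:\TOB^\ell\to\grOB^f$ is an isomorphism by exhibiting, for each pair of objects $\ob a,\ob b$, enough linear functionals on $\Hom_{\COB^f}(\ob a,\ob b)$ to separate the normally ordered diagrams with at most $(\ell-1)$ dots per strand; equivalently, I would construct a sufficiently large supply of representations of the specialized categories $\COB^f(\delta_1,\dots,\delta_\ell)$ and use them to detect linear independence in the associated graded. Since $\Theta^f$ is already known to be full, the only issue is faithfulness, i.e. the spanning diagrams are linearly independent. By standard base change I would first reduce to the case where $\k$ is an algebraically closed field of characteristic zero: freeness over a general integral domain $\k$ follows once one knows the rank is correct after tensoring with the fraction field and then with an algebraic closure, because the spanning set has the expected cardinality and a spanning set of the right size over a field forces a basis.

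The key technical input, and the step I expect to be the main obstacle, is the construction of explicit module categories (or just modules over the algebras $B^f_{r,s}$) on which the normally ordered diagrams act by visibly independent operators. Following the strategy sketched in the introduction, these representations should be built so that the dot $x$ acts by an operator whose eigenvalues can be prescribed via the parameters, in the spirit of the action of the degenerate cyclotomic Hecke algebra on tensor space. Concretely I would realize $\COB^f(\delta_1,\dots,\delta_\ell)$ acting on tensor-product-type spaces attached to a suitable Lie-algebraic or $W$-algebra datum, where $\up$ and $\down$ act by tensoring with a natural module and its dual, $c,d$ by (co)evaluation, $s$ by the flip, and $x$ by a Jucys--Murphy-type element whose minimal polynomial is exactly $f$. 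Verifying that such an assignment respects all the defining relations of $\AOB$ together with $f(x)=0$ is a finite relation-check (this is the ``once discovered, just verify'' remark in the introduction), but discovering the right module and organizing the verification cleanly is the crux.

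Granting these representations, the endgame runs as follows. Each such representation is naturally filtered (by number of dots), so passing to associated graded yields a representation of $\TOB^\ell$; by construction the induced graded action is transparent enough that one can read off that distinct normally ordered diagrams (with the dot-per-strand bound) act by linearly independent operators, provided the parameters $\delta_1,\dots,\delta_\ell$ are chosen generically. This proves faithfulness of $\Theta^f$ for those special parameters. Finally, a density/Zariski-closure argument over the algebraically closed field $\k$ upgrades this to faithfulness of $\Theta^f:\TOB^\ell\to\grOB^f$ itself: the set of parameters for which some fixed finite linear combination of normally ordered diagrams vanishes is Zariski closed, and we have shown it is not everything, so the generic (hence the universal) combination is nonzero. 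Combined with fullness, this gives the isomorphism; Theorem~\ref{12prime} then follows by taking $\ell$ sufficiently large relative to the diagrams under consideration, and Theorems~\ref{thm2} and \ref{thm3} are immediate consequences as noted in the text.
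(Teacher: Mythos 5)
Your proposal follows the same overall strategy as the paper: treat $\Theta^f$ as already full, construct explicit filtered representations of $\COB^f(\delta_1,\dots,\delta_\ell)$ whose associated graded pieces factor through $\TOB^\ell(\delta)$ and on which normally ordered diagrams visibly act independently, and then pass from special parameters to all parameters by a Zariski-density argument. You are right that the construction of the representation is the crux, and the paper's version (the modified transpositions $(p,q)^{\ob a}_\lambda$, the formula (\ref{x action}) for $\Psi_\lambda(x)$ combining a nilpotent shift $e_p$, a diagonal piece $m_{\col(i_p)}$, and the modified transposition terms, with Lemma~\ref{step} giving $f(x)=0$) is precisely the hard content that you flag but do not supply.

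There is one genuine gap in the reduction step. You claim that for a general integral domain $\k$ you can ``first reduce to the case where $\k$ is an algebraically closed field of characteristic zero'' by passing to the fraction field and then to an algebraic closure. This fails when $\operatorname{char}\k=p>0$: both the fraction field and its algebraic closure have characteristic $p$, so you never reach characteristic zero, and the density argument cannot be run over such a field. The density argument needs the parameter points to come from unimodular sequences of genuine positive integers $\lambda_i$ (since $n=\sum\lambda_i$ is the dimension of $V$), and in positive characteristic those sequences land in a finite subset of $\k^\ell$, hence are nowhere near Zariski dense. The paper instead embeds $\k$ into a residue field $\K=\mathcal O/\mathfrak m$ of a characteristic-zero domain $\mathcal O$, lifts $f$ to $\hat f\in\mathcal O[u]$, proves the theorem for $\COB^{\hat f}_{\mathcal O}$ (characteristic zero), and then deduces linear independence in $\COB^f_\k$ via the comparison functor $\COB^f_\k\to\K\otimes_{\mathcal O}\COB^{\hat f}_{\mathcal O}$. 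You would need this extra step, or something like it, before your ``standard base change'' applies. Relatedly, the phrase ``a spanning set of the right size over a field forces a basis'' is not a valid principle for infinite-dimensional Hom spaces; what actually works, and what the paper uses, is that a spanning set whose image under a $\k$-linear functor is linearly independent over a field extension must itself be linearly independent over $\k$.
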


Finally fix also some scalars $\delta=\delta_1,\dots,\delta_\ell \in \k$.
The filtration on $\COB^f$ induces a filtration on the
specialized category $\COB^f(\delta_1,\dots,\delta_\ell)$.
Moreover the functor $\Theta^f$ specializes to a full functor
\begin{equation}\label{Actual}
\Theta^f(\delta_1,\dots,\delta_\ell):\TOB^\ell(\delta) \rightarrow \grOB^f(\delta_1,\dots,\delta_\ell),
\end{equation}
where $\TOB^\ell(\delta)$ is the graded rigid symmetric monoidal category
obtained from $\TOB^\ell$ by evaluating the undotted bubble of degree zero 
at $\delta$ and all of the dotted bubbles of 
positive degree at zero.
Given the truth of Theorem~\ref{15prime}, it follows easily that this specialized functor is also an isomorphism. In the remainder of the article, 
we are going to argue in the opposite direction,
first showing that the 
specialized functor is an isomorphism on sufficiently many Hom spaces
assuming that 
$\k$ is an algebraically closed field of characteristic zero, then deducing 
Theorem~\ref{15prime} and the other main theorems by some density/base change arguments.

\section{Representations}\label{repn}

\subsection{Combinatorial data}\label{data}
In this section we are going to define various representations.
These depend on some data which will be fixed throughout the section.
To start with let $\ell \geq 1$ and pick scalars $m_1,\dots,m_\ell \in \k$.
Next let $\lambda=(\lambda_1,\ldots,\lambda_\ell)$ be a unimodular sequence of positive integers, i.e. we have that $1 \leq \lambda_1 \leq \cdots \leq \lambda_k \geq \cdots \geq \lambda_\ell \geq 1$
for some $1 \leq k \leq \ell$.
Set $n:=\lambda_1+\cdots+\lambda_\ell$. We identify $\lambda$ with a pyramid-like array of boxes with $\lambda_j$ counting the number of boxes in the $j$th column (numbering columns from left to right).  For example, the pyramid for $\lambda=(2,3,2,1,1)$ is the following:
\begin{equation}\label{neeg}
{\begin{picture}(90, 31)%
\put(0,-15){\line(1,0){75}}
\put(0,0){\line(1,0){75}}
\put(0,15){\line(1,0){45}}
\put(15,30){\line(1,0){15}}
\put(0,-15){\line(0,1){30}}
\put(15,-15){\line(0,1){45}}
\put(30,-15){\line(0,1){45}}
\put(45,-15){\line(0,1){30}}
\put(60,-15){\line(0,1){15}}
\put(75,-15){\line(0,1){15}}
\put(7,-7){\makebox(0,0){5}}
\put(22,-7){\makebox(0,0){6}}
\put(37,-7){\makebox(0,0){7}}
\put(52,-7){\makebox(0,0){8}}
\put(67,-7){\makebox(0,0){9}}
\put(7,8){\makebox(0,0){2}}
\put(22,8){\makebox(0,0){3}}
\put(37,8){\makebox(0,0){4}}
\put(22,23){\makebox(0,0){1}}
\put(79,-8){\makebox(0,0){.}}
\end{picture}}
\vspace{4mm}
\end{equation}
As pictured above, we number the boxes in the pyramid $1,\ldots,n$ along the rows starting at the top.
We write $\col(i)$ for the column number of the $i$th box.   For instance, in the pyramid pictured above $\col(6)=2$.
Finally 
introduce the monic polynomials of degree $\ell$:
\begin{align}
f(u) &:= (u-m_1)\cdots (u-m_\ell),\\
f'(u) &:= (u+\lambda_1-m_1)\cdots(u+\lambda_\ell-m_\ell).
\end{align}
To these polynomials, we associate 
scalars $\delta_1,\delta_2,\dots \in \k$ via the 
generating function identity (\ref{deltas}).
Explicitly,
\begin{equation}\label{deltadef}
\delta_k = 
\sum_{i+j=k}h_i(m_1,...,m_\ell) e_j (\lambda_1-m_1,...,\lambda_\ell - m_\ell) 
\end{equation}
where $h_i$ and $e_j$ denote complete and elementary symmetric polynomials,
respectively.
Note in particular that $\delta := \delta_1$ is equal to the integer $n$.

\subsection{\boldmath A representation of $\OB(\delta)$}\label{OB action}  
Let $V$ be the free $\k$-module on basis $v_1,\ldots,v_n$, and
$f_1,\ldots,f_n$ be the dual basis for $V^\ast:=\Hom_\k(V,\k)$. 
It will be convenient to set
$V^{\up} := V$, 
$V^{\down} := V^*$,
$v_i^\up:=v_i$ and $v_i^\down:=f_i$.
Then, for each $\ob a=\ob a_1\cdots\ob a_k\in\wrd$, the $\k$-module 
\begin{equation}
V(\ob a)
:= V^{\ob a_1} \otimes\cdots\otimes V^{\ob a_k}
\end{equation}
is free with a basis consisting of the monomials $v^{\ob a}_{\bd i}:=v_{i_1}^{\ob a_1}\otimes\cdots\otimes v_{i_k}^{\ob a_k}$ for all $k$-tuples ${\bd i}=(i_1,\ldots,i_k)$ of integers with $1\leq i_1,\ldots,i_k\leq n$.  

Note that
$V^\ast$ is a right dual of $V$ in the sense of \S\ref{duality} with $\ep_V:V^\ast\otimes V\to\k$ given by evaluation $f\otimes v\mapsto f(v)$ and $\eta_V:\k\to V\otimes V^\ast$ given by $1\mapsto\sum_{i=1}^nv_i\otimes f_i$.  
Applying Corollary~\ref{OB functor} to this data, we get a  tensor functor 
\begin{equation}\label{Psi}
\Psi:\OB\to\k\text{-mod}
\end{equation} 
which sends 
object $\ob a$ to
$V(\ob a)$.
Given a morphism $g:\ob a\to\ob b$ in $\OB$, 
$\Psi(g)$ is a linear map $V(\ob a)\to V(\ob b)$; we often denote the
image of $u \in V(\ob a)$ under this map simply by $gu$. 
For example, if $\ob a=\up\up\down\down$, then using the notation set up in \S\ref{JM} we have that
$$(1,2)^{\ob a} v^{\ob a}_{\bd i}=v^{\ob a}_{(i_2,i_1,i_3,i_4)}\quad\text{and}\quad(2,4)^{\ob a} v^{\ob a}_{\bd i}=\begin{cases}
-\sum_{j=1}^nv^{\ob a}_{(i_1,j,i_3,j)} &\text{if }i_2=i_4;\\
0 & \text{if }i_2\not=i_4.
\end{cases}
$$  
Finally we observe that $\Psi$ maps the bubble $\Delta$ to the scalar $\delta$ (which we recall is equal to
the dimension $n=\dim V$), hence it factors through the quotient $\OB(\delta)$
to induce a tensor functor
\begin{equation}
\Psi(\delta):\OB(\delta)\to\k\text{-mod}.
\end{equation}

\subsection{Modified transpositions}\label{mod transp}  Fix a word $\ob a\in\wrd$ of length $k$ and distinct integers $1 \leq p,q\leq k$ such that $\ob a_p=\up$.  Define scalars $\beta_{\bd i,\bd j}\in\k$ from the equation 
$(p,q)^\ob a v^\ob a_{\bd j}=\sum_{\bd i}\beta_{\bd i, \bd j}v^\ob a_{\bd i}$.
Then define a linear map 
\begin{equation}\label{piano}
(p,q)^\ob a_\lambda:V(\ob a)\to V(\ob a)
\end{equation} 
by setting $(p,q)^\ob a_\lambda v^\ob a_{\bd j}=\sum_{\bd i}\gamma_{\bd i, \bd j}v^\ob a_{\bd i}$
where \begin{equation*}\gamma_{\bd i, \bd j}=\begin{cases}
\beta_{\bd i, \bd j}& \text{if $p>q$ and $\col(i_p)\geq\col(j_p)$};\\
-\beta_{\bd i,\bd j} & \text{if $p<q$ and $\col(i_p)<\col(j_p)$};\\
0 & \text{otherwise}.
\end{cases}\end{equation*}
For example,  the action of $(p,q)^\ob a_{(n)}$ agrees with the action of $(p,q)^\ob a$ when $p > q$, but $(p,q)^\ob a_{(n)}$ acts as zero when $p <q$.  For a more explicit example, let $\lambda$ be the pyramid pictured in \S\ref{data} and take $\ob a=\up\up\down\down$.  Then we have 
\begin{equation*}
(1,2)^\ob a_\lambda v^\ob a_{(7,6,5,6)}= -v^\ob a_{(6,7,5,6)}\quad\text{and}\quad(2,4)^\ob a_\lambda v^\ob a_{(7,6,5,6)}=v^\ob a_{(7,2,5,2)}+v^\ob a_{(7,5,5,5)}.
\end{equation*}
The maps $(p,q)_\lambda^{\ob a}$ just defined will be a key ingredient for defining a representation of $\AOB$ in $\S$\ref{repn AOB}.  The remainder of this subsection is devoted to recording several technical formulae involving $(p,q)_\lambda^{\ob a}$ which will be needed later.  They can all be proved in a straightforward manner using the definition of $(p,q)^{\ob a}_\lambda$.  We leave the details as an exercise for the reader.

\begin{lemma}\label{mod (p,q) prop} Suppose $p$ and $r$ are distinct positive integers and $\ob a\in \wrd$ satisfies $\ob a_p=\ob a_r=\up$.  
\begin{itemize}
\item[(i)] $(p,r)^\ob a_\lambda\circ(r,p)^\ob a_\lambda=0$.
\item[(ii)] $(p,q)^\ob a_\lambda\circ(r,q)^\ob a_\lambda=(r,q)^\ob a_\lambda\circ(p,r)^\ob a_\lambda+(r,p)^\ob a_\lambda\circ(p,q)^\ob a_\lambda$ whenever $a_q=\up$.
\item[(iii)] $(p,q)^\ob a_\lambda\circ(r,q)^\ob a_\lambda+(p,r)^\ob a_\lambda\circ(r,q)^\ob a_\lambda+(p,q)^\ob a_\lambda\circ(r,p)^\ob a_\lambda=0$ whenever $a_q=\down$.
\item[(iv)] $(p,q_1)^\ob a_\lambda\circ(r,q_2)^\ob a_\lambda=(r,q_2)^\ob a_\lambda\circ(p,q_1)^\ob a_\lambda$ whenever $p, r, q_1, q_2$ are pairwise distinct.
\end{itemize}
\end{lemma}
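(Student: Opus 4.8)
The plan is to translate the combinatorial definition of $(p,q)^{\ob a}_\lambda$ from \S\ref{mod transp} into an explicit action on the monomial basis, and then to check each of (i)--(iv) term by term. Recall from \S\ref{JM} that $(p,q)^{\ob a}$ acts on $v^{\ob a}_{\bd j}$ by interchanging its $p$-th and $q$-th entries when $\ob a_p=\ob a_q$, and by the signed contraction $v^{\ob a}_{\bd j}\mapsto-\delta_{j_p,j_q}\sum_{l=1}^n v^{\ob a}_{\bd j^{(l)}}$, where $\bd j^{(l)}$ denotes $\bd j$ with its $p$-th and $q$-th entries both reset to $l$, when $\ob a_p\neq\ob a_q$. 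Since throughout the lemma $\ob a_p=\ob a_r=\up$, the operators that occur are of exactly two kinds: a ``column-truncated transposition'' when $\ob a_q=\up$, and a ``column-truncated contraction-expansion'' when $\ob a_q=\down$. In both kinds the truncation prescribed in \S\ref{mod transp} amounts to a single column comparison: a term of $(p,q)^{\ob a}_\lambda v^{\ob a}_{\bd j}$ whose position $p$ winds up carrying an entry lying in column $e$ is kept with the same sign it had in $(p,q)^{\ob a} v^{\ob a}_{\bd j}$ if $p>q$ and $e\geq\col(j_p)$, kept with the opposite sign if $p<q$ and $e<\col(j_p)$, and dropped otherwise. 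First I would write these formulas down cleanly; with them in hand all four parts become mechanical.

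Parts (i) and (iv) are then short. For (i), composing the explicit formulas for $(p,r)^{\ob a}_\lambda$ and $(r,p)^{\ob a}_\lambda$ on an arbitrary $v^{\ob a}_{\bd j}$, one of the two truncations survives only when $\col(j_p)\geq\col(j_r)$ and the other only when $\col(j_p)<\col(j_r)$ (which of the two does which depends on whether $p<r$ or $p>r$); as these conditions are incompatible, the composite kills every basis monomial. For (iv), the operators $(p,q_1)^{\ob a}_\lambda$ and $(r,q_2)^{\ob a}_\lambda$ modify entries only in the disjoint position sets $\{p,q_1\}$ and $\{r,q_2\}$, while the truncation attached to each is dictated solely by the column of the entry arriving at position $p$, respectively $r$ --- a position the other operator leaves alone --- so the two operators commute.

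Parts (ii) and (iii) carry the real content, since the corresponding identities already \emph{fail} for the untruncated morphisms: in (ii) all three composites induce the same $3$-cycle of the entries of $\bd j$, so without truncation the left side is that $3$-cycle while the right side is twice it; in (iii) the untruncated left side sends $v^{\ob a}_{\bd j}$ to $-\delta_{j_q,j_r}$ times a sum of $n$ distinct monomials, not to $0$. Nevertheless, applying either side of (ii) or (iii) to $v^{\ob a}_{\bd j}$, the terms are tightly controlled: in (ii) every non-zero term is a multiple of the single monomial obtained by applying the common $3$-cycle to positions $\{p,q,r\}$; in (iii) each composite vanishes unless $j_q=j_r$, and when $j_q=j_r$ all three produce a linear combination of the $n$ monomials in which positions $p$ and $q$ carry a common index $m\in\{1,\dots,n\}$ and position $r$ carries $j_p$. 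So comparing coefficients reduces each identity to a purely scalar identity whose data are only the columns $a:=\col(j_p)$, $b:=\col(j_q)$, $c:=\col(j_r)$ (together with $\col(m)$ in the case of (iii)) and the relative order of the three integers $p,q,r$. What remains is a finite case check: run through the (at most) six orderings of $p,q,r$; in each case express the surviving term of the left side and the two right-hand terms of (ii) (resp.\ the three terms of (iii)) as signed products of Iverson brackets in those column variables; and confirm that the resulting arithmetic identity holds for all choices of columns.

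I expect the bookkeeping inside this last step to be the only genuine obstacle. The two truncations making up each composite are \emph{nested}: the column against which the outer truncation compares is the one produced by the inner operator rather than the original $\col(j_p)$, so one must keep careful track of how the inner operator has shuffled entries among positions $p,q,r$ before applying the outer truncation rule. I would keep this in check by fixing the uniform notation above from the start and by recording once and for all that the truncation data of any operator $(i,k)^{\ob a}_\lambda$ depends only on whether $i>k$ or $i<k$ and on the column of the entry that ends up at position $i$; granted that, (ii) and (iii) reduce to a short --- if mildly tedious --- Boolean/arithmetic verification in $a,b,c$ (and $\col(m)$).
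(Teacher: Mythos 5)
The paper itself does not supply a proof: it states that the identities ``can all be proved in a straightforward manner using the definition of $(p,q)^{\ob a}_\lambda$'' and leaves the details as an exercise. Your proposal carries out exactly that intended direct verification, and the strategy is sound: you correctly reduce each part to an Iverson-bracket identity in the column data, observe that (i) and (iv) fall out immediately (for (i), the two truncation conditions $\col(j_p)\ge\col(j_r)$ and $\col(j_p)<\col(j_r)$ are incompatible; for (iv), the operators read and modify disjoint position sets), and correctly identify that (ii) and (iii) collapse to finitely many scalar identities once one notices that all three composites land on the same monomial(s) --- the common $3$-cycle in (ii), and in (iii) (after imposing $j_q=j_r$) the family $v_{\bd k^{(m)}}$ with positions $p,q$ carrying $m$ and position $r$ carrying $j_p$. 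I spot-checked the resulting Boolean identities for one ordering in each of (ii) and (iii) and they come out, so the case analysis you defer is genuinely just bookkeeping.

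One small imprecision in your summary sentence: the truncation condition for $(i,k)^{\ob a}_\lambda$ does not depend \emph{only} on the column of the entry arriving at position $i$; it also depends on the column of the entry that was at position $i$ before the operator acted (this is the $\col(j_p)$ in the definition). You state this correctly earlier in the proposal and it does not affect any of your arguments --- in (iv) both of these reference positions are still disjoint from $\{r,q_2\}$, so commutativity survives --- but the closing restatement should be amended to avoid misleading a reader following your ``fix the uniform notation once and for all'' instruction.
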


For the next three lemmas, let $\ob a, \ob b\in\wrd$, let $k$ denote the length of $\ob a$, and let $p$ be a positive integer such that the $p$th letter in the word $\ob a\down\down\ob b$ is $\up$.  

\begin{lemma}\label{c and (p,q)}   
~
\begin{itemize} 
\item[(i)] $\Psi(\ob a\,c\,\ob b)\circ(p,q)^{\ob a\ob b}_\lambda=(p,q)^{\ob a\up\down\ob b}_\lambda\circ \Psi(\ob a\,c\,\ob b)$ whenever $p,q\leq k$.
\item[(ii)] $\Psi(\ob a\,c\,\ob b)\circ(p,q-2)^{\ob a\ob b}_\lambda=(p,q)^{\ob a\up\down\ob b}_\lambda\circ \Psi(\ob a\,c\,\ob b)$ whenever $p\leq k$ and $q>k+2$.
\item[(iii)] $\Psi(\ob a\,c\,\ob b)\circ(p-2,q)^{\ob a\ob b}_\lambda=(p,q)^{\ob a\up\down\ob b}_\lambda\circ \Psi(\ob a\,c\,\ob b)$ whenever $p>k+2$ and $q\leq k$.
\item[(iv)] $\Psi(\ob a\,c\,\ob b)\circ(p-2,q-2)^{\ob a\ob b}_\lambda=(p,q)^{\ob a\up\down\ob b}_\lambda\circ \Psi(\ob a\,c\,\ob b)$ whenever $p,q>k+2$.
\item[(v)] $(p,k+1)^{\ob a\up\down\ob b}_\lambda\circ\Psi(\ob a\,c\,\ob b)+(p,k+2)^{\ob a\up\down\ob b}_\lambda\circ\Psi(\ob a\,c\,\ob b)=0$.
\end{itemize}
\end{lemma}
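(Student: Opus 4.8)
The plan is to verify all five identities by direct computation on the monomial basis $\{v^{\ob a\ob b}_{\bd j}\}$ of $V(\ob a\ob b)$, using the explicit formula $\Psi(\ob a\,c\,\ob b)\,v^{\ob a\ob b}_{\bd j} = \sum_{i=1}^n v^{\ob a\up\down\ob b}_{\bd j^{(i)}}$, where $k$ denotes the length of $\ob a$ and $\bd j^{(i)}$ is the tuple obtained from $\bd j$ by inserting the entry $i$ in both of the new positions $k+1$ and $k+2$. The crucial structural feature to keep in mind throughout is that the ``column truncation'' built into the definition of $(p,q)^{\ob a}_\lambda$ only tests the column of the index occupying the distinguished position $p$ (the one carrying $\up$), and that in parts~(i)--(iv) the positions $p,q$ lie entirely outside $\{k+1,k+2\}$.

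For parts~(i)--(iv) I would first record the analogous identities for the \emph{unmodified} maps $(p,q)^{\ob a}$, e.g.\ $\Psi(\ob a\,c\,\ob b)\circ(p,q)^{\ob a\ob b}=(p,q)^{\ob a\up\down\ob b}\circ\Psi(\ob a\,c\,\ob b)$ in case~(i) and similarly with the indicated index shifts in~(ii)--(iv). Since $\Psi$ is a tensor functor (Corollary~\ref{OB functor}), these reduce to the evident fact that, as a morphism in $\OB$, the cup $\ob a\,c\,\ob b$ inserted at positions $k+1,k+2$ commutes with a crossing or cap--cup morphism supported on positions away from $k+1,k+2$ once the latter is relabelled by the appropriate shift. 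To upgrade to the modified maps, one notes that for each $l$ the coefficient of $v^{\ob a\ob b}_{\bd i}$ in $(p,q)^{\ob a\ob b}v^{\ob a\ob b}_{\bd j}$ equals that of $v^{\ob a\up\down\ob b}_{\bd i^{(l)}}$ in the corresponding term on the $\ob a\up\down\ob b$ side, while the column condition that turns the $\beta$-coefficient into the $\gamma$-coefficient involves only $\col$ of the $p$th entry, which is untouched by the insertion $\bd j\mapsto\bd j^{(l)}$. Hence exactly the same terms survive the truncation on both sides.

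Part~(v) is the one requiring care. Its unmodified counterpart already vanishes, for a transparent diagrammatic reason: in $\OB$ both $(p,k+1)^{\ob a\up\down\ob b}\circ(\ob a\,c\,\ob b)$ and $-(p,k+2)^{\ob a\up\down\ob b}\circ(\ob a\,c\,\ob b)$ equal the \emph{same} oriented Brauer diagram (a cup on top positions $p$ and $k+2$ together with a strand from bottom position $p$ to top position $k+1$), so applying $\Psi$ kills the sum. For the modified statement I would compute on $v^{\ob a\ob b}_{\bd j}$ directly: precomposition with $\Psi(\ob a\,c\,\ob b)$ forces the entry at the new $\up$ (position $k+1$) and the entry at the new $\down$ (position $k+2$) to agree, and when $(p,k+2)^{\ob a\up\down\ob b}$ caps position $k+2$ to position $p$ the resulting evaluation pins this common entry to $\iota$, the fixed entry at position $p$. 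One then checks that $(p,k+1)^{\ob a\up\down\ob b}_\lambda\circ\Psi(\ob a\,c\,\ob b)$ and $(p,k+2)^{\ob a\up\down\ob b}_\lambda\circ\Psi(\ob a\,c\,\ob b)$ produce, up to a global sign, the identical family of monomials, indexed by those entries $l$ whose column obeys the single inequality imposed by the truncation; the crossing in the first term contributes a sign from the ``$p<q$'' (resp.\ ``$p>q$'') clause of the definition of $(p,q)^{\ob a}_\lambda$, the second term contributes the sign from ``minus the cap--cup pair'', and these two signs are opposite, giving the cancellation. The two cases $p\le k$ and $p>k+2$ are handled in parallel, the only difference being that the column inequality is reversed, since the distinguished position then lies to the right rather than the left of the inserted pair.

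The main obstacle is precisely the sign and index bookkeeping in~(v): one must consistently treat the first-listed index of $(p,q)^{\ob a}_\lambda$ as the distinguished $\up$-position, keep track of the two independent minus signs, and handle the left/right dichotomy for $p$. Parts~(i)--(iv) are routine once the reduction to the unmodified identities together with the observation that the truncation only sees position $p$ is in place.
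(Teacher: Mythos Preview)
Your proposal is correct and matches the paper's intended approach: the paper does not actually give a proof of this lemma, stating only that these formulae ``can all be proved in a straightforward manner using the definition of $(p,q)^{\ob a}_\lambda$'' and leaving the details as an exercise. Your direct computation on the monomial basis, together with the key observation that the column truncation in the definition of $(p,q)^{\ob a}_\lambda$ only tests the entry at the distinguished $\up$-position $p$ (which is untouched by the insertion at positions $k+1,k+2$), is precisely the kind of verification the authors have in mind.

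One small remark on the sign discussion in part~(v): for $p\le k$, \emph{both} terms pick up the extra minus sign from the ``$p<q$'' clause of the truncation (since $p<k+1$ and $p<k+2$); the cancellation then comes from the fact that the underlying $\beta$-coefficient is $+1$ for the crossing $(p,k+1)$ but $-1$ for the cap--cup $(p,k+2)$. Your computation is right, but the sentence attributing one sign to the clause and the other solely to the cap--cup could be tightened.
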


\begin{lemma}\label{d and (p,q)} 
~
\begin{itemize}
\item[(i)]$\Psi(\ob a\,d\,\ob b)\circ(p,q)^{\ob a\down\up\ob b}_\lambda=(p,q)^{\ob a\ob b}_\lambda\circ \Psi(\ob a\,d\,\ob b)$ whenever $p,q\leq k$.
\item[(ii)]$\Psi(\ob a\,d\,\ob b)\circ(p,q)^{\ob a\down\up\ob b}_\lambda=(p,q-2)^{\ob a\ob b}_\lambda\circ \Psi(\ob a\,d\,\ob b)$ whenever $p\leq k$ and $q>k+2$.
\item[(iii)]
$\Psi(\ob a\,d\,\ob b)\circ(p,q)^{\ob a\down\up\ob b}_\lambda=(p-2,q)^{\ob a\ob b}_\lambda\circ \Psi(\ob a\,d\,\ob b)$ whenever $p>k+2$ and $q\leq k$.
\item[(iv)] 
$\Psi(\ob a\,d\,\ob b)\circ(p,q)^{\ob a\down\up\ob b}_\lambda=(p-2,q-2)^{\ob a\ob b}_\lambda\circ \Psi(\ob a\,d\,\ob b)$ whenever $p,q>k+2$.
\item[(v)]
$\Psi(\ob a\,d\,\ob b)\circ(p,k+1)^{\ob a\down\up\ob b}_\lambda+\Psi(\ob a\,d\,\ob b)\circ(p,k+2)^{\ob a\down\up\ob b}_\lambda=0$.
\end{itemize}
\end{lemma}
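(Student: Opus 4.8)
The plan is to verify each of (i)--(v) by evaluating both sides on an arbitrary basis monomial $v^{\ob a\down\up\ob b}_{\bd j}$ of $V(\ob a\down\up\ob b)$, exactly in the spirit of the preceding lemmas. Recall first that $\Psi(\ob a\,d\,\ob b)$ is the map $1_{V(\ob a)}\otimes\ep_V\otimes 1_{V(\ob b)}$ contracting the slot in position $k+1$ (a copy of $V^*$) against the slot in position $k+2$ (a copy of $V$); concretely,
\begin{equation*}
\Psi(\ob a\,d\,\ob b)\bigl(v^{\ob a\down\up\ob b}_{\bd j}\bigr)=\delta_{j_{k+1},j_{k+2}}\,v^{\ob a\ob b}_{\bd j'},\qquad\bd j'=(j_1,\dots,j_k,j_{k+3},\dots),
\end{equation*}
with $\delta$ here the Kronecker symbol. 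Recall also from $\S\ref{mod transp}$ that $(p,q)^{\ob c}_\lambda$ alters a monomial only in positions $p$ and $q$ (replacing the entries there by a sum over a common new value when $(p,q)$ is a cap--cup), and that the scalar it produces depends on the indices only through $\col(i_p)$ and $\col(j_p)$.

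For parts (i)--(iv) the hypotheses put $p$ and $q$ outside $\{k+1,k+2\}$, so $(p,q)^{\ob a\down\up\ob b}_\lambda$ operates only in positions disjoint from the two slots contracted by $\Psi(\ob a\,d\,\ob b)$ and, in particular, does not disturb $j_{k+1}$ or $j_{k+2}$. Hence the factor $\delta_{j_{k+1},j_{k+2}}$ pulls through, and the two operators commute up to the renumbering that sends a position $>k+2$ to the position two lower after contraction. Since $\col$ is read off from the entry in position $p$ independently of the order in which the two operations are performed, and since the sign prescribed in $\S\ref{mod transp}$ depends only on whether $p<q$, a relation preserved by this uniform shift, the modified coefficients match. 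The four cases merely record whether each of $p,q$ lies in the $\ob a$-block (index unchanged) or in the $\ob b$-block (index lowered by $2$); this bookkeeping can alternatively be packaged as the interchange law after writing $(p,q)^{\ob a\down\up\ob b}_\lambda$ as a tensor product with identity factors occupying the slots $k+1,k+2$.

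Only part (v) calls for a genuine computation, since there one of $k+1,k+2$ is involved. As the slot $k+1$ carries $\down$ and the slot $k+2$ carries $\up$, the diagram $(p,k+1)^{\ob a\down\up\ob b}$ is minus a cap--cup while $(p,k+2)^{\ob a\down\up\ob b}$ is a crossing. I would argue as follows. Applying $(p,k+1)^{\ob a\down\up\ob b}_\lambda$ to $v^{\ob a\down\up\ob b}_{\bd j}$ gives $\pm\,\delta_{j_p,j_{k+1}}$ times $\sum_m v^{\ob a\down\up\ob b}_{\bd i}$, where $i_p=i_{k+1}=m$, the remaining entries of $\bd i$ agree with those of $\bd j$, and the sum runs over the values $m$ with $\col(m)$ on the prescribed side of $\col(j_p)$; then $\Psi(\ob a\,d\,\ob b)$ forces $m=j_{k+2}$ and leaves $\pm\,\delta_{j_p,j_{k+1}}$ times the monomial obtained by deleting the slots $k+1,k+2$ and installing $j_{k+2}$ in position $p$, subject to the column condition relating $\col(j_{k+2})$ to $\col(j_p)$. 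Applying $(p,k+2)^{\ob a\down\up\ob b}_\lambda$ to the same monomial instead transposes $j_p$ and $j_{k+2}$, with the opposite overall sign, this being traceable to the sign conventions of $\S\ref{mod transp}$ for a crossing versus a cap--cup; afterwards $\Psi(\ob a\,d\,\ob b)$ forces $j_{k+1}=j_p$ and yields exactly the negative of the previous expression, so the two contributions cancel. This is precisely (v), and the same computation goes through verbatim whether $p\le k$ or $p>k+2$, with only the direction of the column inequality changing.

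The main obstacle is entirely clerical: one must keep the index shifts in step with the column-dependent signs of $\S\ref{mod transp}$ through the several cases and, in (v), confirm that the two surviving terms really are the same monomial carrying opposite coefficients. No new idea is required beyond the definitions, which is why I expect the whole verification to reduce to careful bookkeeping.
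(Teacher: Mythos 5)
The paper gives no proof of this lemma: immediately before Lemma~\ref{mod (p,q) prop} the authors state that Lemmas~\ref{mod (p,q) prop}--\ref{t and (p,q)} ``can all be proved in a straightforward manner using the definition of $(p,q)^{\ob a}_\lambda$. We leave the details as an exercise for the reader.'' Your proposal is precisely a correct execution of that exercise: evaluate both sides on a basis monomial $v^{\ob a\down\up\ob b}_{\bd j}$ and track coefficients. For (i)--(iv) the observation that $(p,q)^{\ob a\down\up\ob b}_\lambda$ touches only positions disjoint from the contracted slots $k+1,k+2$, that the Kronecker factor $\delta_{j_{k+1},j_{k+2}}$ therefore pulls through, and that the index shift by $2$ preserves the ordering $p\lessgtr q$ (hence the sign branch in the definition of $\gamma_{\bd i,\bd j}$) is exactly the point. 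For (v) your computation is correct and the cancellation is genuine: unwinding the definitions as you describe, one finds for $p\le k$
\begin{align*}
\Psi(\ob a\,d\,\ob b)\,(p,k+1)^{\ob a\down\up\ob b}_\lambda\, v_{\bd j}
&= \delta_{j_p,j_{k+1}}\,\bigl[\col(j_{k+2})<\col(j_p)\bigr]\, v_{\bd j''},\\
\Psi(\ob a\,d\,\ob b)\,(p,k+2)^{\ob a\down\up\ob b}_\lambda\, v_{\bd j}
&= -\,\delta_{j_p,j_{k+1}}\,\bigl[\col(j_{k+2})<\col(j_p)\bigr]\, v_{\bd j''},
\end{align*}
with the analogous formulas (same pair of opposite signs, column condition replaced by $\geq$) when $p>k+2$; here $\bd j''$ is obtained from $\bd j$ by deleting positions $k+1,k+2$ and installing $j_{k+2}$ in position $p$. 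The minus sign built into the cap--cup morphism $(p,q)^{\ob c}$ when $\ob c_p\ne\ob c_q$ interacts with the common sign branch of $\gamma_{\bd i,\bd j}$ (both uses have $p<q$, or both have $p>q$) to produce exactly the relative sign you identified. No gap.
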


\begin{lemma}\label{t and (p,q)} 
~
\begin{itemize}
\item[(i)]
$\Psi(\ob a\,s\,\ob b)\circ(p,q)^{\ob a\up\up\ob b}_\lambda=(p,q)^{\ob a\up\up\ob b}_\lambda\circ \Psi(\ob a\,s\,\ob b)$ whenever $q\leq k$ or $q>k+2$.
\item[(ii)]
$\Psi(\ob a\,s\,\ob b)\circ(p,k+1)^{\ob a\up\up\ob b}_\lambda=(p,k+2)^{\ob a\up\up\ob b}_\lambda\circ \Psi(\ob a\,s\,\ob b)$.
\item[(iii)]
$\Psi(\ob a\,s\,\ob b)\circ(p,k+2)^{\ob a\up\up\ob b}_\lambda=(p,k+1)^{\ob a\up\up\ob b}_\lambda\circ \Psi(\ob a\,s\,\ob b)$. 
\item[(iv)]
$\Psi(\ob a\,s\,\ob b)\circ(k+1,q)^{\ob a\up\up\ob b}_\lambda=(k+2,q)^{\ob a\up\up\ob b}_\lambda\circ\Psi(\ob a\,s\,\ob b)$ whenever $q\leq k$ or $q>k+2$.
\item[(v)]
$(k+2,k+1)^{\ob a\up\up\ob b}_\lambda\circ\Psi(\ob a\,s\,\ob b)=\Psi(\ob a\,s\,\ob b)\circ(k+1,k+2)^{\ob a\up\up\ob b}_\lambda+1_{V(\ob a\up\up\ob b)}$.  
\end{itemize}
\end{lemma}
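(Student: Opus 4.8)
The plan is to verify (i)--(v) by a direct computation on basis monomials, reading everything off the defining formulas. Put $\ob c := \ob a\up\up\ob b$ and $k := |\ob a|$. Each operator appearing is a $\k$-linear endomorphism of $V(\ob c)$, so it is enough to evaluate both sides on an arbitrary monomial $v^{\ob c}_{\bd j}$. By the definition of $\Psi$ (see \S\ref{OB action}), $\Psi(\ob a\,s\,\ob b) = 1_{V(\ob a)}\otimes\sigma_{V,V}\otimes 1_{V(\ob b)}$ simply transposes the $(k+1)$th and $(k+2)$th entries of the index tuple $\bd j$; and by \S\ref{mod transp} the vector $(p,q)^{\ob c}_\lambda v^{\ob c}_{\bd j}$ is obtained from $(p,q)^{\ob c}v^{\ob c}_{\bd j}$ by discarding every monomial $v^{\ob c}_{\bd i}$ whose $p$th entry has column strictly less than $\col(j_p)$ (when $p > q$), resp.\ at least $\col(j_p)$ (when $p < q$), and then multiplying what remains by $-1$ when $p < q$. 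Note also that $(p,q)^{\ob c}_\lambda$, like $(p,q)^{\ob c}$, acts as the identity on every tensor factor except the $p$th and $q$th, whether $\ob c_q = \up$ (a signed partial transposition of entries) or $\ob c_q = \down$ (a signed partial contraction).

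Part (i) is then immediate: there $p,q \notin \{k+1,k+2\}$, so $(p,q)^{\ob c}_\lambda$ is the identity on the $(k+1)$th and $(k+2)$th factors while $\Psi(\ob a\,s\,\ob b)$ is the identity on all the others, hence the two commute. In parts (ii)--(iv) exactly one of $p,q$ lies in $\{k+1,k+2\}$, and I would deduce the identity from two elementary observations. First, writing $w$ for the transposition of the positions $k+1$ and $k+2$, one has, viewing $w$ as an operation on index tuples, that $w$ conjugates the (partial) transposition of positions $p,q$ into the (partial) transposition of positions $w(p),w(q)$; moreover relabelling the position of an entry via $w$ changes neither its value nor its column, so the ``output'' and ``input'' columns governing the selection rule above are the same on the two sides of the claimed identity. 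Second, the $\pm$ sign is also unchanged, because when at most one of $p,q$ belongs to $\{k+1,k+2\}$ the two positions $k+1$ and $k+2$ lie on the same side of the other one, whence $p>q \Leftrightarrow w(p)>w(q)$. Plugging these into the formula of the first paragraph, and splitting off the contraction case $\ob c_q = \down$ (which can occur only in (iv)), yields (ii)--(iv).

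Part (v) is the single case with $\{p,q\} = \{k+1,k+2\}$, so that $w$ \emph{reverses} the order of $p$ and $q$ and the sign flips; this is exactly where the correction term is born. Evaluating both sides of (v) on $v^{\ob c}_{\bd j}$ and splitting according to whether $\col(j_{k+1})$ is greater than, equal to, or less than $\col(j_{k+2})$, one finds that both sides equal $v^{\ob c}_{\bd j}$ in the first two cases and $0$ in the third, the summand $1_{V(\ob c)}$ on the right accounting exactly for the deficit produced by the sign flip. (This $1_{V(\ob c)}$ is the representation-theoretic counterpart of the $\up\up$-term in relation (\ref{AX}), and of the sign change between (\ref{AX}) and (\ref{more}).)

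\textbf{Main obstacle.} I expect the only real difficulty to be bookkeeping: each of (ii)--(v) breaks into a handful of subcases according to the direction of a column inequality, the relative order of $p$ and $q$, and --- in (iv) --- whether $\ob c_q$ is $\up$ or $\down$; but in each subcase the two sides are computed in one line from the formula for $(p,q)^{\ob c}_\lambda$ and the action of $\Psi(\ob a\,s\,\ob b)$. This is entirely in the routine spirit of the verifications left to the reader for Lemmas~\ref{mod (p,q) prop}, \ref{c and (p,q)}, and~\ref{d and (p,q)}.
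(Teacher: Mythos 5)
Your strategy --- direct verification on basis monomials, noting that $\Psi(\ob a\,s\,\ob b)$ simply swaps positions $k+1$ and $k+2$ of the index tuple while the $\lambda$-modification only imposes a column filter at position $p$ together with a sign depending on $p\lessgtr q$ --- is exactly the ``straightforward'' argument the paper is expecting, and your conjugation-by-$w$ treatment of parts (i)--(iv) is sound: since $p\notin\{k+1,k+2\}$ there, $w$ preserves both the column data at position $p$ and the direction of the inequality $p\lessgtr q$, so the filter and sign match up on the two sides.

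Your case analysis in part (v), however, is reversed. Working it out on $v^{\ob c}_{\bd j}$: the left side is $(k+2,k+1)^{\ob c}_\lambda\,v^{\ob c}_{w(\bd j)}$, and since here $p=k+2>q=k+1$ with input tuple $w(\bd j)$ and output tuple $\bd j$, the filter keeps the term precisely when $\col(j_{k+2})\geq\col\bigl(w(\bd j)_{k+2}\bigr)=\col(j_{k+1})$. On the right, $(k+1,k+2)^{\ob c}_\lambda v^{\ob c}_{\bd j}$ has $p=k+1<q=k+2$, so it contributes $-v^{\ob c}_{w(\bd j)}$ (hence, after $\Psi(\ob a\,s\,\ob b)$, $-v^{\ob c}_{\bd j}$) exactly when $\col(j_{k+2})<\col(j_{k+1})$; adding $1_{V(\ob c)}$ then gives $v^{\ob c}_{\bd j}$ when $\col(j_{k+2})\geq\col(j_{k+1})$ and $0$ otherwise. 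So both sides equal $v^{\ob c}_{\bd j}$ when $\col(j_{k+1})$ is \emph{less than or equal to} $\col(j_{k+2})$, and vanish when it is strictly greater --- the opposite of what you asserted. A concrete sanity check with $\lambda=(2,3,2,1,1)$, $\ob a=\ob b=\varnothing$, $\bd j=(9,2)$ (so $\col(j_1)=5>\col(j_2)=1$, your ``first case''): $(2,1)_\lambda\Psi(s)v_{\bd j}=(2,1)_\lambda v_{(2,9)}=0$ since $\col(2)=1\not\geq\col(9)=5$, and $\Psi(s)(1,2)_\lambda v_{\bd j}+v_{\bd j}=-v_{\bd j}+v_{\bd j}=0$, not $v_{\bd j}$. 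The mechanism you describe (the $+1_{V(\ob c)}$ compensating for the sign flip caused by $w$ reversing the order of $k+1$ and $k+2$) is the right one, but you carried the inequality through backwards.
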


\subsection{\boldmath A graded representation of $\TOB^\ell(\delta)$}\label{thenilpotent}
We now put a $\Z$-grading on the $\k$-modules 
$V$ and $V^*$ 
from $\S$\ref{OB action}
by declaring that $\deg(v_i) = -\col(i)$
and $\deg(f_i) = \col(i)$.
We get an induced grading on $V(\ob a)$
for each $\ob a \in \wrd$.
Let $e \in \End_\k(V)$
be the homogeneous
linear transformation of degree one that maps basis vector $v_i$ to
$v_{i-1}$ if
the $i$th box of the pyramid $\lambda$ is {\em not} the leftmost box in its row,
or to zero otherwise.
Clearly this is nilpotent with Jordan block sizes equal to the
lengths of the rows of the pyramid $\lambda$; in particular $e^\ell = 0$.

Applying Corollary~\ref{GOB functor}
we get a tensor functor 
\begin{equation}
\Phi_\lambda:\GOB \rightarrow \k\text{-gmod}
\end{equation}
sending $\up$ to $V$,
$\down$ to $V^*$,
and $x$ to $e$.
This functor is obviously graded.
Moreover, since $e^\ell = 0$, it factors through the quotient $\TOB^\ell$
to induce 
\begin{equation}
\Phi_\lambda^\ell:\TOB^\ell \rightarrow \k\text{-gmod}.
\end{equation}
Finally we observe that $\Phi_\lambda^\ell$ maps the undotted bubble of degree zero to the scalar $\delta$, and it sends all other dotted bubbles to zero.
Hence it factors again to induce a graded tensor functor
\begin{equation}\label{Cat}
\Phi_\lambda^\ell(\delta): \TOB^\ell(\delta) \rightarrow \k\text{-gmod}.
\end{equation}
In particular, given a word $\ob a = \ob a^{(1)} \up \ob a^{(2)}$
with $\ob a^{(1)}$ of length $(p-1)$, 
this functor sends the morphism 
$\ob a^{(1)} \,x \, \ob a^{(2)}$ to the linear map
\begin{equation}\label{Lp}
e_p:V(\ob a) \rightarrow V(\ob a)
\end{equation}
defined by applying $e$ to the $p$th tensor position.

\begin{lemma}\label{vustinj}
Let $\ob a, \ob b \in \wrd$ be words whose average length
is $\leq \min(\lambda)$.
Then the linear map 
$\Hom_{\TOB^\ell(\delta)}(\ob a, \ob b)
\rightarrow \Hom_{\k}(V(\ob a), V(\ob b))$
induced by $\Phi^\ell_\lambda(\delta)$ is injective.
\end{lemma}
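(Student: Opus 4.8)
The plan is to exhibit an explicit basis of $\Hom_{\TOB^\ell(\delta)}(\ob a,\ob b)$ and show that its images under $\Phi^\ell_\lambda(\delta)$ remain linearly independent in $\Hom_\k(V(\ob a),V(\ob b))$. By the description of $\TOB^\ell$ given just before \eqref{G to grOB_f}, a basis for $\Hom_{\TOB^\ell(\delta)}(\ob a,\ob b)$ is given by the equivalence classes of normally ordered dotted oriented Brauer diagrams of type $\ob a\to\ob b$ with no bubbles (the bubbles having been specialized to scalars) and with at most $\ell-1$ dots on each strand. First I would fix notation: such a diagram $D$ consists of an underlying oriented Brauer diagram — equivalently a matching of the $\up$-endpoints of $\ob a$ and $\down$-endpoints of $\ob b$ to the $\up$-endpoints of $\ob b$ and $\down$-endpoints of $\ob a$, i.e. a "generalised permutation with caps and cups" — together with a dot-count $d_\sigma\in\{0,\dots,\ell-1\}$ attached to each outward-pointing boundary segment $\sigma$. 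I then compute $\Phi^\ell_\lambda(\delta)(D)$ on a basis monomial $v^{\ob a}_{\bd i}$ of $V(\ob a)$: following the strands, each cup creates a sum $\sum_j v_j\otimes f_j$, each cap contracts via evaluation, and the dots apply the nilpotent $e$ (which shifts column labels). The upshot is a formula of the shape $\Phi^\ell_\lambda(\delta)(D)\,v^{\ob a}_{\bd i}=\sum_{\bd h}c^D_{\bd h,\bd i}\,v^{\ob b}_{\bd h}$, where the coefficient is nonzero only when the entries of $\bd h$ along each strand are obtained from those of $\bd i$ by a sequence of "move left within the row of the pyramid" steps dictated by the dots, and where cup/cap positions force equalities among the relevant indices.

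The key step is to choose, for each normally ordered diagram $D$, a "leading" pair of basis monomials $(v^{\ob a}_{\bd i_D},v^{\ob b}_{\bd h_D})$ that detects $D$ and no other basis diagram. The natural choice: run each strand from its source; along a strand carrying $d$ dots, start at the leftmost box of some row of $\lambda$ that is long enough (this is where the hypothesis enters), so that after applying $e^{d}$-type shifts nothing is killed, and the target index is the box $d$ steps to the right in that row. Concretely, assign the through-strands and the cup-cap pairs distinct rows of $\lambda$, put the source index of each strand at the left end of its assigned row, and read off the target index after shifting. Because $\ob a$ and $\ob b$ have average length $\le\min(\lambda)$, the total number of strand-endpoints on the two sides is at most $2\min(\lambda)$, hence the number of strands is at most $\min(\lambda)$, so we can assign the strands to \emph{distinct} rows of $\lambda$ — and each chosen row has length $\ge$ the (at most $\ell-1$) dots on that strand, so no shift falls off the end of its row. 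With such an assignment, I claim $c^D_{\bd h_D,\bd i_D}=\pm1$ (or $\delta$-free: it is a product of $1$'s from the cup/cap contractions and $1$'s from the dot shifts), whereas for any other normally ordered diagram $D'$ one has $c^{D'}_{\bd h_D,\bd i_D}=0$: the underlying matchings of $D$ and $D'$ differ, or they agree but the dot counts differ, and in either case the column-shift bookkeeping forces the coefficient to vanish on this particular pair. This triangularity — index the basis diagrams by the pairs $(\bd i_D,\bd h_D)$ and observe the resulting "matrix" of coefficients is (after ordering diagrams appropriately) unitriangular — gives the linear independence, hence injectivity.

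The main obstacle will be setting up the combinatorics of the previous paragraph cleanly enough to verify the two claims ($c^D_{\bd h_D,\bd i_D}$ is a unit, and $c^{D'}_{\bd h_D,\bd i_D}=0$ for $D'\neq D$). The first claim is essentially a direct computation once the row-assignment is fixed, using that distinct strands land in distinct rows so there is no unwanted contraction. The second is the delicate point: I must argue that if $\Phi^\ell_\lambda(\delta)(D')$ has a nonzero $(\bd h_D,\bd i_D)$-entry then $D'$ must have the same underlying Brauer diagram as $D$ (because the matching is recovered from which source index equals, after a bounded leftward shift within a single row, which target index — and the rows were chosen disjointly so the pairing is unambiguous) and the same dot counts (because the column of the target minus the column of the source along a strand equals exactly the number of dots on that strand, given our choice of starting at a row's left end). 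Here again the average-length hypothesis is exactly what guarantees the disjoint-row assignment exists, so that the "which index came from which" reconstruction is forced rather than merely possible. The remaining verifications — that the $\Phi$-formula for a normally ordered diagram really is as described, and that the ordering of diagrams making the matrix triangular exists — are routine and I would leave the bookkeeping as a guided exercise, citing Lemmas~\ref{c and (p,q)}--\ref{t and (p,q)} and the explicit description of $e$ in $\S\ref{thenilpotent}$ as needed.
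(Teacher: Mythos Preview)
Your approach is essentially correct but takes a genuinely different route from the paper. The paper does not attempt to handle arbitrary $\ob a,\ob b$ directly. Instead it first treats only the case $\ob a=\ob b=\up^m$, where $\End_{\TOB^\ell(\delta)}(\up^m)$ is identified with the smash product $\big(\k[x_1,\dots,x_m]/(x_1^\ell,\dots,x_m^\ell)\big)\rtimes S_m$, and applies every basis monomial $x_1^{k_1}\cdots x_m^{k_m}w$ to a \emph{single} test vector $v^{\up^m}_{\bd i}$, with the entries of $\bd i$ chosen to be the boxes in the rightmost column of the bottom $m$ rows of the pyramid. The images are pairwise distinct monomial basis vectors of $V^{\otimes m}$, so linear independence is immediate. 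The general case is then deduced from this one by the braiding and duality bijections \eqref{dual hom}--\eqref{dual hom 2}, which identify $\Hom_{\TOB^\ell(\delta)}(\ob a,\ob b)$ with $\End_{\TOB^\ell(\delta)}(\up^m)$ compatibly with $\Phi^\ell_\lambda(\delta)$; this sidesteps entirely the cup/cap bookkeeping you set up.

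Your direct argument can be made to work, but a couple of details need correcting. First, $e$ shifts indices \emph{leftward} within a row (it sends $v_i$ to $v_{i-1}$ unless $i$ is leftmost), so your source indices should sit at the rightmost box of the assigned row, not the leftmost; and what is needed is that each assigned row has length exactly $\ell$, of which there are precisely $\min(\lambda)$, matching the number of strands. Second, Lemmas~\ref{c and (p,q)}--\ref{t and (p,q)} concern the modified transpositions $(p,q)^{\ob a}_\lambda$ appearing in the filtered functor $\Psi_\lambda$ and play no role for the graded functor $\Phi^\ell_\lambda(\delta)$; only the explicit description of $e$ and the standard action of $c,d,s$ are relevant. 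Once these are fixed, your row-disjointness argument for the vanishing of $c^{D'}_{\bd h_D,\bd i_D}$ when $D'\neq D$ goes through: any strand of $D'$ contributing a nonzero coefficient must connect endpoints whose indices lie in the same row, hence on the same strand of $D$, forcing the matchings to agree and then the column-shifts to pin down the dot counts. The trade-off is that the paper's reduction is considerably shorter and avoids the case analysis over strand types, while your approach is self-contained and does not rely on the rigidity of $\TOB^\ell(\delta)$.
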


\begin{proof}
We first prove this in the special case that
$\ob a = \ob b = \up^m$ for some $m \leq \min(\lambda)$.
Then it is obvious that the algebra 
$\End_{\GOB^\ell(\delta)}(\ob a)$ 
is the smash product $\left(\k[x_1,\dots,x_m] / (x_1^\ell,\dots,x_m^\ell) \right) \rtimes S_m$,
where $x_i := \up^{i-1} x \up^{m-i}$
and $S_m$ is the symmetric group generated by the
transpositions $s_j := \up^{j-1} s \up^{m-j-1}$.
In particular it has a basis consisting of monomials $x_1^{k_1} \cdots x_m^{k_m} w$
for $0 \leq k_1,\dots,k_m < \ell$ and $w \in S_m$.
Let $\bd i = (i_1,\dots,i_m)$ be a tuple consisting
of the numbers of the
boxes in the rightmost column and the bottom $m$ rows of the pyramid $\lambda$.
The assumption on $m$ means that each of these 
rows contains $\ell$ boxes.
Then $$\Phi^\ell_\lambda(\delta) (x_1^{k_1} \cdots x_m^{k_m} w) (v_{\bd i}^{\ob a})
= e_1^{k_1} \cdots e_m^{k_m} (v_{w(\bd i)}^{\ob a})
$$
where $w(\bd i) := (i_{w^{-1}(1)},\dots,i_{w^{-1}(m)})$.
These vectors for all $0 \leq k_1,\dots,k_m < \ell$ and $w \in S_m$
are clearly linearly independent. The desired injectivity follows.

Now for the general case, take any words $\ob a, \ob b \in \wrd$.
Suppose that $\ob a$ involves $p$ $\up$'s and $q$ $\down$'s,
while $\ob b$ involves $r$ $\up$'s and $s$ $\down$'s.
We may assume that $p+s=r+q$, 
since otherwise $\Hom_{\TOB^\ell(\delta)}(\ob a, \ob b) = 0$.
This number is the average length $m$ of the words $\ob a$ and $\ob b$,
so $m \leq \min(\lambda)$.
Consider the following diagram: 
\begin{equation*}
\begin{CD}
\Hom_{\TOB^\ell(\delta)}(\ob a, \ob b)&@>>>&
\Hom_{\TOB^\ell(\delta)}(\down^q \up^p, \up^r \down^s)
&@>>>&
\End_{\TOB^\ell(\delta)}(\up^{m})\\
@V\Phi^\ell_\lambda(\delta)VV&
&@V\Phi^\ell_\lambda(\delta)VV&
&@VV\Phi^\ell_\lambda(\delta) V\\
\Hom_{\k}(V(\ob a), V(\ob b))&@>>>&
\Hom_{\k}(V(\down^q \up^p),
V(\up^r \down^s))
&@>>>&
\End_{\k}(V^{\otimes m}).
\end{CD}
\end{equation*}
The first horizontal maps are the bijections defined in an obvious way using the
symmetric braidings. Since the tensor functor 
$\Phi^\ell_\lambda(\delta)$ preserves the braidings,
the left hand square commutes.
The second horizontal maps are the bijections obtained using (\ref{dual hom}) and (\ref{dual hom 2}).
Since $\Phi^\ell_\lambda(\delta)$ preserves right duals, the right hand square commutes. 
Finally the right hand vertical map is injective according to the previous paragraph, hence the left hand vertical map is too.
\end{proof}

\subsection{\boldmath A filtered representation of $\COB^f(\delta_1,\dots,\delta_\ell)$}\label{repn AOB}  
In this subsection we are going to construct a filtered
functor
$\Psi_\lambda^f(\delta_1,\dots,\delta_\ell):
\COB^f(\delta_1,\dots,\delta_\ell) \rightarrow
\k\text{-fmod}$
which we will see is a deformation of the
graded functor $\Phi_\lambda^\ell(\delta)$ from (\ref{Cat}).
The definition of this functor 
will likely seem quite unmotivated; we will say more about its
origin in $\S$\ref{later}.
We begin by defining a 
functor
\begin{equation}\label{psifun}
\Psi_\lambda:\AOB\to\k\text{-fmod}.
\end{equation}
Note this is
{\em not} going to be a tensor functor. 
However, on forgetting the filtrations, it is going to agree
with the tensor functor $\Psi:\OB\to\k\text{-mod}$ from (\ref{Psi}) on objects and on all undotted diagrams. 
First, for $\ob a\in \wrd$, we 
set $\Psi_\lambda(\ob a) := V(\ob a)$ viewed as a filtered vector space
with $V(\ob a)_{\leq i} := \bigoplus_{j \leq i} V(\ob a)_j$.
This is the same underlying vector space as $\Psi(\ob a)$.
On morphisms,
it suffices to define $\Psi_\lambda$ on 
$\ob a^{(1)} \,c\,\ob a^{(2)}$,
$\ob a^{(1)} \,d\,\ob a^{(2)}$,
$\ob a^{(1)} \,s\,\ob a^{(2)}$,
$\ob a^{(1)} \,t\,\ob a^{(2)}$ and
$\ob a^{(1)} \,x\,\ob a^{(2)}$
for each $\ob a^{(1)}, \ob a^{(2)} \in \wrd$, since these morphisms generate
$\AOB$ as a $\k$-linear category.
For all but the last of these generating morphisms, we take the same linear 
map as given by
the functor $\Psi$ from $\S$\ref{OB action}.
It remains to define $\Psi_\lambda(\ob a^{(1)} \,x \, \ob a^{(2)})$.
Let $\ob a:= \ob a^{(1)} \up \ob a^{(2)}$ and assume that
$\ob a^{(1)}$ is of length $(p-1)$.
Then we set
\begin{equation}\label{x action} 
\Psi_\lambda(\ob a^{(1)} \,x\, \ob a^{(2)}) (v^{\ob a}_{\bd i}):=e_p(v^\ob a_{\bd i})
+m_{\col(i_p)}v^\ob a_{\bd i}+\sum_{q\not=p}(p,q)^\ob a_\lambda v^\ob a_{\bd i},
\end{equation} 
where $e_p$ is the map from (\ref{Lp}).

\begin{lemma}\label{Psi is a functor} 
The functor $\Psi_\lambda:\AOB \rightarrow \k\operatorname{-fmod}$ 
is well defined.
\end{lemma}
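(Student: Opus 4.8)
The plan is to invoke the presentation of $\AOB$ as a $\k$-linear category recalled in \S\ref{g-r}: generating morphisms $\ob a\,g\,\ob b$ for $g\in\{c,d,s,t,x\}$, with a full set of relations given by the tensored-up forms of (\ref{A down})--(\ref{I}) and (\ref{AX}) --- where (\ref{I}) is replaced by the two relations saying that $t$ is a two-sided inverse of $(d\up\down)\circ(\down s\down)\circ(\down\up c)$ --- together with the commuting relations (\ref{comm relations}) for every pair of generators. So we must check that $\Psi_\lambda$ sends each generator to a genuine filtered $\k$-linear map and respects every relation. The first point is immediate: on the generators other than the $\ob a\,x\,\ob b$ the assignment is that of the tensor functor $\Psi$ of \S\ref{OB action}, and $\Psi_\lambda(\ob a^{(1)}\,x\,\ob a^{(2)})$ is filtered of degree $\leq 1$ because $e_p$ of (\ref{Lp}) is homogeneous of degree $1$ while the two remaining summands of (\ref{x action}) are homogeneous of degree $0$. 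Moreover, the relations in this presentation that do not involve $x$ --- the tensored (\ref{A down})--(\ref{I}) and the commuting relations among $c,d,s,t$ --- already hold in $\OB$, so $\Psi$, hence $\Psi_\lambda$, respects them. This reduces everything to the relations involving $x$.

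For those the uniform device is to split $\Psi_\lambda(\ob a^{(1)}\,x\,\ob a^{(2)})$ into the three summands of (\ref{x action}): the shift part $e_p$, the diagonal part $v^{\ob a}_{\bd i}\mapsto m_{\col(i_p)}v^{\ob a}_{\bd i}$, and the Jucys--Murphy part $\sum_{q\neq p}(p,q)^{\ob a}_\lambda$. The first two summands act only in the $p$th tensor slot, so their compatibility with $\Psi(c),\Psi(d),\Psi(s)$ inserted on other slots, and with the reindexings occurring in the tensored (\ref{AX}), is elementary --- for instance $e_p\circ\Psi(\ob a\,s\,\ob b)=\Psi(\ob a\,s\,\ob b)\circ e_{p'}$ when $s$ transposes the two slots in question, and similarly for the diagonal part. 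The Jucys--Murphy part is what Lemmas~\ref{c and (p,q)}, \ref{d and (p,q)} and \ref{t and (p,q)} are designed for: parts (i)--(iv) of each move the terms $(p,q)^{\ob a}_\lambda$ that ``pass through'' the creation, annihilation or crossing with the correct reindexing, while part (v) of each says that the two remaining terms, those that would otherwise ``see'' the newly created, destroyed or crossed pair of strands, cancel after composition --- this is precisely why $\sum_{q\neq p}$ is the right thing to track, and it makes $\Psi_\lambda(x)$ ``see through'' the local modification. For the tensored version of (\ref{AX}), namely $(\ob a\up x\,\ob b)\circ(\ob a\,s\,\ob b)=(\ob a\,s\,\ob b)\circ(\ob a\,x\up\ob b)+\ob a\up\up\ob b$, the shift and diagonal parts match term by term, the pass-through Jucys--Murphy terms match by Lemma~\ref{t and (p,q)}(iv), and the two leftover terms produce exactly the extra identity summand by Lemma~\ref{t and (p,q)}(v). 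The commuting relation of $\ob a\,x\,\ob b$ with $\ob a\,t\,\ob b$ needs no new input: since $\Psi_\lambda$ agrees with $\Psi$ on $t$ and on $(d\up\down)\circ(\down s\down)\circ(\down\up c)$, and $\Psi(t)$ is inverse to the $\Psi$-image of the latter, it suffices to observe that $\Psi_\lambda(\ob a\,x\,\ob b)$ commutes with that image --- a composite of $\Psi$'s of $c,d,s$ in which the $x$-strand is a bystander throughout --- and then to invert.

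The one genuinely substantial relation, and the main obstacle, is the commuting relation between two copies of $x$ inserted in distinct positions both carrying $\up$: they must give commuting operators on $V(\ob a)$. This is the degenerate affine Hecke algebra type identity. Writing the two operators as $e_p+D_p+J_p$ and $e_{p'}+D_{p'}+J_{p'}$, with $D$ the diagonal parts and $J_p=\sum_{q\neq p}(p,q)^{\ob a}_\lambda$, the cross-terms between non-interacting slots vanish trivially, so the vanishing of the commutator comes down to the $[J_p,J_{p'}]$ contribution, which is governed by Lemma~\ref{mod (p,q) prop}(i)--(iv), together with the cancellations $[e_p,(p',p)^{\ob a}_\lambda]+[(p,p')^{\ob a}_\lambda,e_{p'}]=0$ and $[D_p,(p',p)^{\ob a}_\lambda]+[(p,p')^{\ob a}_\lambda,D_{p'}]=0$. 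These last two combinations are the crux: they are checked by a direct (slightly lengthy) computation from the explicit formula defining $(p,q)^{\ob a}_\lambda$, and they hold precisely because the column-comparison signs built into that formula are arranged so that the relevant sums telescope. It is exactly this interplay between those signs and the shift operator $e$ that pins down the particular shape of (\ref{x action}). With all the checks above in place, $\Psi_\lambda:\AOB\to\k\operatorname{-fmod}$ is well defined.
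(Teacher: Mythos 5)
Your proof is correct and follows essentially the same strategy as the paper's: reduce to the $\k$-linear presentation from \S\ref{g-r}, dispatch all $x$-free relations via the functor $\Psi$ of \S\ref{OB action}, handle the tensored (\ref{AX}) and the $x$-vs-$c,d,s$ commuting relations via Lemmas~\ref{c and (p,q)}--\ref{t and (p,q)}, and reduce the two-$x$ commuting relation to the interactions among the three summands of (\ref{x action}). Your only genuine departures are cosmetic but welcome: you repackage the paper's case-by-case comparison of ``$e$-terms,'' ``$m$-terms,'' and ``$(p,q)$-terms'' into the cleaner commutator identities $[e_p,(r,p)]+[(p,r),e_r]=0$, $[D_p,(r,p)]+[(p,r),D_r]=0$, and $[J_p,J_r]=0$ (the last being exactly the consequence of Lemma~\ref{mod (p,q) prop} that the paper invokes), and you explicitly record the point --- which the paper leaves tacit --- that the $x$-vs-$t$ commuting relation is free once the $x$-vs-$c,d,s$ relations are known, because $t$ is the two-sided inverse of a composite of $c,d,s$.
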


\begin{proof} 
We must check that $\Psi_\lambda$ respects the relations 
between the generating morphisms for $\AOB$ as a $\k$-linear category
which arise in the manner explained in \S\ref{g-r}.
All of the relations which do not involve dots follow from Corollary~\ref{OB functor}.  
The relations (\ref{AXdumb}) are easy to check using parts (iv) and (v) of Lemma \ref{t and (p,q)}.  The commuting relations involving $x$ and $s$ can be checked using parts (i)-(iii) of Lemma \ref{t and (p,q)}. 
The commuting relations involving $x$ and $c$ (resp.\ $x$ and $d$)
can be checked using
Lemma \ref{c and (p,q)} (resp. Lemma \ref{d and (p,q)}). It just 
remains to check the commuting relations involving two $x$'s.
Say $\ob a = \ob a^{(1)} \up \ob a^{(2)} = \ob b^{(1)} \up \ob b^{(2)}$
where $\ob a^{(1)}$ is of length $(p-1)$ and $\ob b^{(1)}$ is of length 
$(r-1)$ for $p \neq r$.
Then we need to show
that
\begin{equation}\label{xx} 
\Psi_\lambda(\ob a^{(1)} \,x\, \ob a^{(2)}) (\Psi_\lambda(\ob b^{(1)} \,x\, \ob b^{(2)}) (v_{\bd i}^{\ob a}))=\Psi_\lambda(\ob b^{(1)} \,x\, \ob b^{(2)})(\Psi_\lambda(\ob a^{(1)} \,x\, \ob a^{(2)})(v_{\bd i}^{\ob a}))
\end{equation}
 for all $\bd i$.
For this, we naively expand both sides using the definition (\ref{x action}); each side becomes a sum of nine terms.
Then we observe 
that
\begin{equation*}
(r,p)_\lambda^{\ob a}\circ e_p=-e_r\circ(p,r)_\lambda^{\ob a},
\end{equation*}
as is straightforward to verify from the definitions of the maps
(\ref{piano}) and (\ref{Lp}).
It follows that
the terms on the left and right hand sides of
the expansion of (\ref{xx}) that involve $e$'s are both equal to 
 $$
e_p(e_r(v^{\ob a}_{\bd i}))+m_{\col(i_p)}e_r(v^{\ob a}_{\bd i})+m_{\col(i_r)}e_p(v^{\ob a}_{\bd i})+\sum_{q\not=p,r}\left((p,q)^{\ob a}_\lambda e_r(v^{\ob a}_{\bd i})+(r,q)^{\ob a}_\lambda e_p(v^{\ob a}_{\bd i})\right).
$$
Of the remaining terms, if $\col(i_p)=\col(i_r)$ and we set $m=m_{\col(i_p)}=m_{\col(i_r)}$, then one can show the terms on the left and right hand sides of (\ref{xx}) involving $m$'s but no $e$'s are both equal to 
$$m^2v_{\bd i}^{\ob a}+m\sum_{q\not=p}(p,q)_\lambda^{\ob a}v^{\ob a}_{\bd i}+m\sum_{q\not=r}(r,q)_\lambda^{\ob a}v^{\ob a}_{\bd i}.
$$
On the other hand, if $\col(i_p)\not=\col(i_r)$ then $(r,p)_\lambda^{\ob a}v^{\ob a}_{\bd i}=-(p,r)_\lambda^{\ob a}v^{\ob a}_{\bd i}$.  Using this, it follows that
the terms on the left and right hand sides of (\ref{xx}) involving $m$'s but no $e$'s are both 
 $$m_{\col(i_p)}m_{\col(i_r)}v_{\bd i}^{\ob a}+m_{\col(i_r)}\sum_{q\not=p,r}(p,q)_\lambda^{\ob a}v^{\ob a}_{\bd i}+m_{\col(i_p)}\sum_{q\not=p,r}(r,q)_\lambda^{\ob a}v^{\ob a}_{\bd i}.$$ 
It just remains to observe that
the terms on both sides involving no $m$'s and no $e$'s are equal.
This follows from the identity
\begin{equation*} \sum_{\substack{q_1\not=p\\ q_2\not=r}}(p,q_1)^\ob a_\lambda\circ(r,q_2)^\ob a_\lambda=\sum_{\substack{q_1\not=p\\ q_2\not=r}}(r,q_2)^\ob a_\lambda\circ(p,q_1)^\ob a_\lambda,
\end{equation*}
which is a consequence of Lemma~\ref{mod (p,q) prop}.
\end{proof}

\begin{lemma}\label{step}
Given any $\ob a \in \wrd$ and $1 \leq k \leq \ell$,
let $g$ denote the image of
$(x\, \ob a - m_k (\up \ob a)) \circ (x\,\ob a - m_{k+1} (\up \ob a))
\circ \cdots \circ (x\, \ob a - m_\ell (\up \ob a))$ under the functor $\Psi_\lambda$.
Then for any $\bd i$, the vector
$g(v_\bd i^{\up \ob a})$ 
is contained in the subspace of $V(\up \ob a)$
spanned by all $v_\bd j^{\up \ob a}$ with $\col(j_1) < k$.
In particular if $k=1$ then $g=0$.
\end{lemma}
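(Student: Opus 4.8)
The plan is to filter $V(\up\ob a)$ by the column index of the first tensor factor and to show that each factor $x\,\ob a - m_c(\up\ob a)$ strictly decreases this filtration. For $0 \le c \le \ell$ let $W_{\le c}$ denote the $\k$-span of those basis vectors $v^{\up\ob a}_{\bd j}$ with $\col(j_1) \le c$, so that $0 = W_{\le 0} \subseteq W_{\le 1} \subseteq \cdots \subseteq W_{\le \ell} = V(\up\ob a)$ (the column indices of boxes of $\lambda$ range over $1,\dots,\ell$). The first step is to read off from (\ref{x action}), applied with $p = 1$, that for every tuple $\bd i$
\[
\Psi_\lambda(x\,\ob a)(v^{\up\ob a}_{\bd i}) \in m_{\col(i_1)}\,v^{\up\ob a}_{\bd i} + W_{\le \col(i_1)-1}.
\]
There are three contributions to check. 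The term $e_1(v^{\up\ob a}_{\bd i})$ is, by the definition of $e$ in \S\ref{thenilpotent}, either zero or a basis vector $v^{\up\ob a}_{\bd j}$ with $\col(j_1) = \col(i_1) - 1$, hence lies in $W_{\le \col(i_1)-1}$. The middle term is exactly $m_{\col(i_1)}\,v^{\up\ob a}_{\bd i}$. Finally, each $(1,q)^{\up\ob a}_\lambda v^{\up\ob a}_{\bd i}$ with $q \ne 1$ is, by the definition of the modified transpositions in \S\ref{mod transp} in the case $p = 1 < q$, a $\k$-linear combination of basis vectors $v^{\up\ob a}_{\bd j}$ whose coefficient $\gamma_{\bd j,\bd i}$ is nonzero only when $\col(j_1) < \col(i_1)$; so this contribution also lies in $W_{\le \col(i_1)-1}$.

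The second step is then immediate. Since $\Psi_\lambda$ is $\k$-linear, $\Psi_\lambda(x\,\ob a - m_c(\up\ob a)) = \Psi_\lambda(x\,\ob a) - m_c\, 1_{V(\up\ob a)}$ sends $v^{\up\ob a}_{\bd i}$ to $(m_{\col(i_1)} - m_c)\,v^{\up\ob a}_{\bd i} + w_{\bd i}$ with $w_{\bd i} \in W_{\le \col(i_1)-1}$. If $\col(i_1) = c$ the diagonal coefficient vanishes and the image lies in $W_{\le c-1}$; if $\col(i_1) < c$ both remaining terms lie in $W_{\le \col(i_1)} \subseteq W_{\le c-1}$. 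As $W_{\le c}$ is spanned by the $v^{\up\ob a}_{\bd i}$ with $\col(i_1) \le c$, this shows $\Psi_\lambda(x\,\ob a - m_c(\up\ob a))$ maps $W_{\le c}$ into $W_{\le c-1}$.

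Finally, $g$ is the composite of the linear maps $\Psi_\lambda(x\,\ob a - m_j(\up\ob a))$ for $j$ running from $\ell$ down to $k$, the factor with $j = \ell$ applied first. Starting from $W_{\le \ell} = V(\up\ob a)$ and applying the second step repeatedly — the $j = \ell$ factor lands in $W_{\le \ell-1}$, the $j = \ell-1$ factor in $W_{\le \ell-2}$, and so on — we get $g(V(\up\ob a)) \subseteq W_{\le k-1}$, which is exactly the span of the $v^{\up\ob a}_{\bd j}$ with $\col(j_1) < k$. When $k = 1$ this says $g(V(\up\ob a)) \subseteq W_{\le 0} = 0$, i.e.\ $g = 0$.

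The only substantive point is the first step, and even there the work is merely to unravel the definitions of $e$ and of $(1,q)^{\up\ob a}_\lambda$ and observe their effect on the column of the first strand; the column-lowering behaviour of $(p,q)^{\ob a}_\lambda$ for $p < q$ is built directly into its definition, so I do not anticipate a genuine obstacle. Everything after that is a one-line filtration bookkeeping argument.
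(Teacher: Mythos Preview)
Your proof is correct and is exactly the downward induction on $k$ that the paper sketches in one line; you have simply spelled out the key observation that, when $p=1$, each of the three summands in (\ref{x action}) either contributes the diagonal term $m_{\col(i_1)}v^{\up\ob a}_{\bd i}$ or strictly lowers the column of the first tensor factor.
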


\begin{proof}
This follows from (\ref{x action}) using 
downward induction on $k=\ell,\dots,1$.
\end{proof}

\begin{lemma}\label{eta} 
Fix $1
\leq i, j\leq n$.
For each $k\geq 0$, let $\eta_{i,j}^{(k)}$ denote the coefficient of $v_i\otimes f_i$ in $\Psi_\lambda(x \down)^{k}(v_j\otimes f_j)$.  Then
$$
\eta_{i,j}^{(k)} = \left\{
\begin{array}{ll}
(m_{\col(i)})^{k} &\text{if $i=j$;}\\
\displaystyle
\sum_{r=1}^k
\sum_{\col(i)=p_0 < \cdots < p_r =\col(j)}
\!\!\!\!\!\!\!\!\!\!h_{k-r}(m_{p_0},\ldots,m_{p_{r}}) \lambda_{p_1} \cdots  \lambda_{p_{r-1}}
&\text{if $\col(i) < \col(j)$;}\\
0&\text{otherwise.}
\end{array}
\right.
$$
\end{lemma}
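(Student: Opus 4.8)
The plan is to reduce the computation of $\Psi_\lambda(x\down)^k$ to a transparent combinatorial count of weighted paths through the columns of the pyramid $\lambda$. First I would make the operator $T:=\Psi_\lambda(x\down)\in\End_\k(V\otimes V^*)$ completely explicit on the basis $\{v_a\otimes f_b\}$. Here $x\down$ is the morphism $x\otimes 1_\down:\up\down\to\up\down$, so specializing the defining formula (\ref{x action}) to $\ob a=\up\down$ and $p=1$ (the only index $q\neq p$ is $q=2$) and reading off the action of the cap-cup morphism and its $\lambda$-modification from (\ref{piano}) and the examples in \S\ref{OB action} and \S\ref{mod transp}, one obtains
\begin{equation*}
T(v_a\otimes f_b)=e(v_a)\otimes f_b+m_{\col(a)}\,(v_a\otimes f_b)+\delta_{a,b}\sum_{\col(c)<\col(a)}v_c\otimes f_c .
\end{equation*}

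Next I would discard the off-diagonal part. Let $D$ be the span of the ``diagonal'' vectors $v_a\otimes f_a$ and $U$ the span of the $v_a\otimes f_b$ with $\col(a)<\col(b)$. From the formula above, $e$ strictly lowers the column of the first tensor factor while the $\delta_{a,b}$-term then drops out, so $T(U)\subseteq U$, and likewise $T(D)\subseteq D\oplus U$. Hence $T$ preserves $D\oplus U$ and acts block lower-triangularly on it with respect to this decomposition; since $U$ contains no vector $v_i\otimes f_i$, the coefficient $\eta^{(k)}_{i,j}$ of $v_i\otimes f_i$ in $T^k(v_j\otimes f_j)$ equals the coefficient of $v_i\otimes f_i$ in $\bar{T}^k(v_j\otimes f_j)$, where $\bar{T}:D\to D$ is the compression
\begin{equation*}
\bar{T}(v_a\otimes f_a)=m_{\col(a)}\,(v_a\otimes f_a)+\sum_{\col(c)<\col(a)}v_c\otimes f_c .
\end{equation*}

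Finally comes the path count. Expanding $\bar{T}^k(v_j\otimes f_j)$, each application of $\bar{T}$ to $v_a\otimes f_a$ either keeps the index $a$ (weight $m_{\col(a)}$, a ``stay'') or replaces it by some $c$ with $\col(c)<\col(a)$ (weight $1$, a ``drop''); so $\eta^{(k)}_{i,j}$ is the sum of products of weights over all sequences $j=a_0,a_1,\dots,a_k=i$ of stays and drops. Columns never increase along such a sequence, so $\eta^{(k)}_{i,j}=0$ when $\col(i)>\col(j)$, and when $\col(i)=\col(j)$ only the constant sequence survives, forcing $i=j$ and contributing $(m_{\col(i)})^k$ — this gives the first and third cases. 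When $\col(i)<\col(j)$ I would group the sequences by their set of distinct columns $\col(i)=p_0<p_1<\cdots<p_r=\col(j)$ with $r\geq1$: consecutive distinct columns are joined by exactly one drop, so a sequence has exactly $r$ drops and $k-r$ stays (hence $1\leq r\leq k$); inside each visited column the index is constant, forced to be $j$ in column $p_r$, forced to be $i$ in column $p_0$, and free among the $\lambda_{p_s}$ indices of column $p_s$ for $1\leq s\leq r-1$, yielding $\lambda_{p_1}\cdots\lambda_{p_{r-1}}$; and distributing the $k-r$ stays among the $r+1$ visited columns with a stay in column $p_s$ weighted by $m_{p_s}$ contributes $\sum_{e_0+\cdots+e_r=k-r}m_{p_0}^{e_0}\cdots m_{p_r}^{e_r}=h_{k-r}(m_{p_0},\dots,m_{p_r})$. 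Summing over $r$ and over $p_1<\cdots<p_{r-1}$ strictly between $\col(i)$ and $\col(j)$ produces exactly the claimed formula.

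I expect the genuinely delicate point to be the bookkeeping in the last step — namely that the number of drop steps equals the number of distinct columns visited minus one, and that the sequence occupies a single index in each column it meets — both of which follow from the elementary facts that a stay fixes the index and a drop strictly decreases the column, but which must be phrased carefully so that the identity for $h_{k-r}$ falls out cleanly. As a backup, once $\bar{T}$ has been identified one can instead verify the proposed closed form by induction on $k$ from the recursion $\eta^{(k)}_{i,j}=m_{\col(i)}\,\eta^{(k-1)}_{i,j}+\sum_{a:\col(a)>\col(i)}\eta^{(k-1)}_{a,j}$, using also that $\eta^{(k)}_{a,j}=0$ whenever $\col(a)>\col(j)$.
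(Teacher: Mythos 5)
Your argument is correct, and it takes a genuinely different route from the paper's. The paper's proof establishes the same one-step formula for $\Psi_\lambda(x\down)$ on diagonal vectors and then runs an induction on $k$ using the ``first-step'' recursion $\eta_{i,j}^{(k)} = m_{\col(j)}\eta_{i,j}^{(k-1)} + \sum_{\col(h)<\col(j)}\eta_{i,h}^{(k-1)}$, followed by roughly half a page of algebraic manipulation with complete symmetric polynomials to massage the inductive hypothesis into the stated closed form. You instead compute the action of $T := \Psi_\lambda(x\down)$ on the full basis of $V\otimes V^*$, observe the block-triangular structure with respect to the decomposition $D\oplus U$ (diagonal versus strictly off-diagonal with $\col(a)<\col(b)$), pass to the compression $\bar T$ on $D$, and read off $\eta^{(k)}_{i,j}$ as a weighted count of stay/drop sequences. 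This makes the origin of the closed form transparent: the inner sum over columns $p_0 < \cdots < p_r$ enumerates the possible drop patterns, the free intermediate indices produce $\lambda_{p_1}\cdots\lambda_{p_{r-1}}$, and distributing the $k-r$ stays over the $r+1$ visited columns yields $h_{k-r}(m_{p_0},\dots,m_{p_r})$ directly from its monomial expansion, with no identity-chasing. The paper's approach is more mechanical but has the minor advantage of never needing the action of $T$ off the diagonal (it only implicitly uses $T(U)\subseteq U$); your approach buys a conceptual explanation of the symmetric-function structure in the answer. Your fallback induction is also sound, though note it splits off the last step rather than the first as in the paper, so the recursion is superficially different but equally valid.
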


\begin{proof} As a special case of (\ref{x action}) we have that
\begin{equation}\label{x on up down}\Psi_\lambda(x\, \down)(v_j\otimes f_j) =
e(v_j) \otimes f_j+m_{\col(j)}v_j\otimes f_j+
\sum_{\col(i)<\col(j)}v_i\otimes f_i.
\end{equation}
It follows that $\eta_{i,j}^{(k)}=0$ unless $i=j$ or $\col(i)<\col(j)$, and that $\eta^{(k)}_{i,i}=m_{\col(i)}^{k}$.  
We now treat the case that $\col(i)<\col(j)$ by induction on $k$.  The base
case $k=0$ is clear.  For $k\geq 1$, we have by (\ref{x on up down}) that
\begin{align*}
\eta_{i,j}^{(k)}  &=m_{\col(j)}\eta_{i,j}^{(k-1)}+
\!\!\!\!\!\!\!\sum_{\col(h)<\col(j)}\!\!\!\!\!\eta^{(k-1)}_{i,h}=m_{\col(j)}\eta_{i,j}^{(k-1)}+m_{\col(i)}^{k-1}
+\!\!\!\!\!\!\!\!\!\sum_{\col(i)<\col(h)<\col(j)}\!\!\!\!\!\!\!\eta^{(k-1)}_{i,h}.
\end{align*}
Then using induction we get
\begin{align*}\eta_{i,j}^{(k)} &=
m_{\col(j)} 
\sum_{r=1}^{k-1}
\sum_{\col(i)= p_0 < \cdots < p_r =\col(j)}
h_{k-1-r}(m_{p_0},\dots,m_{p_r}) \lambda_{p_1}\cdots \lambda_{p_{r-1}}
+m_{\col(i)}^{k-1}\\
&\quad
+
\!\!\!\!\!\sum_{\col(i)<\col(h)<\col(j)}
\sum_{r=1}^{k-1}
\sum_{\col(i)= p_0 < \cdots < p_r=\col(h)}
h_{k-1-r}(m_{p_0},\dots,m_{p_r}) \lambda_{p_1}\cdots \lambda_{p_{r-1}}
\\
&=\sum_{r=1}^{k-1}\sum_{\col(i)= p_0 < \cdots < p_r=\col(j)}
h_{k-1-r}(m_{p_0},\dots,m_{p_r})m_{\col(j)} 
 \lambda_{p_1}\cdots \lambda_{p_{r-1}}
+m_{\col(i)}^{k-1}\\
&\quad
+
\sum_{\col(i)<p<\col(j)}
\sum_{r=1}^{k-1}
\sum_{\col(i)=p_0 < \cdots < p_r=p}
h_{k-1-r}(m_{p_0},\dots,m_{p_r}) \lambda_{p_1}\cdots \lambda_{p_{r-1}} \lambda_p
\\
&=\sum_{r=1}^{k-1}\sum_{\col(i)= p_0 < \cdots < p_r=\col(j)}
h_{k-1-r}(m_{p_0},\dots,m_{p_r})m_{\col(j)}  \lambda_{p_1}\cdots \lambda_{p_{r-1}}
+m_{\col(i)}^{k-1}\\
&\quad
+
\sum_{r=1}^{k-1}
\sum_{\col(i)= p_0 < \cdots < p_{r+1}=\col(j)}
h_{k-1-r}(m_{p_0},\dots,m_{p_r}) \lambda_{p_1}\cdots \lambda_{p_{r}}
\\
&=\sum_{r=1}^{k-1}\sum_{\col(i)= p_0 < \cdots < p_r=\col(j)}
h_{k-1-r}(m_{p_0},\dots,m_{p_r})m_{\col(j)}\lambda_{p_1}\cdots \lambda_{p_{r-1}}
\\
&\quad
+
\sum_{r=0}^{k-1}\sum_{\col(i)= p_0 < \cdots < p_{r+1} =\col(j)}
h_{k-1-r}(m_{p_0},\dots,m_{p_r}) \lambda_{p_1}\cdots \lambda_{p_{r}}
\\
&=
\sum_{r=1}^{k-1}
\sum_{\col(i)=p_0 < \cdots < p_r = \col(j)}
h_{k-1-r}(m_{p_0},\dots,m_{p_r})m_{\col(j)} \lambda_{p_1}\cdots \lambda_{p_{r-1}}
\\
&\quad
+
\sum_{r=1}^k
\sum_{\col(i) = p_0 < \cdots < p_r = \col(j)}
h_{k-r}(m_{p_0},\dots,m_{p_{r-1}}) \lambda_{p_1}\cdots \lambda_{p_{r-1}}
\\
&=\sum_{r=1}^k \sum_{\col(i) = p_0 < \cdots < p_r = \col(j)}
h_{k-r}(m_{p_0},\dots,m_{p_{r}}) \lambda_{p_1}\cdots \lambda_{p_{r-1}}.
\end{align*}
This is what we wanted.
\end{proof}

\begin{lemma}\label{deltaacts}
The image under $\Psi_\lambda$
of the clockwise bubble $\Delta_k$ with $(k-1)$ dots
is equal to the scalar $\delta_k$ from (\ref{deltadef}).
\end{lemma}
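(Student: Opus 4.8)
The plan is to write $\Delta_k$ as an explicit composition of generating morphisms, push it through $\Psi_\lambda$, and then identify the resulting scalar with $\delta_k$ by combining Lemma~\ref{eta} with the defining generating function (\ref{deltas}). The first observation is a purely diagrammatic one: using $d'=d\circ t$ from (\ref{primes}), the clockwise bubble carrying $(k-1)$ dots can be presented as
\[
\Delta_k \;=\; d'\circ (x^{k-1}\,\down)\circ c \;:\;\varnothing\longrightarrow\varnothing ,
\]
obtained by creating a cup with $c$, placing $k-1$ dots on the resulting $\up$-strand, and closing off with the cap $d'$; checking orientations shows the loop so produced is indeed clockwise, and for $k=1$ this reads $\Delta_1=d\circ t\circ c$. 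I regard this as a routine identity in the diagram calculus and would simply record it.

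Next I would apply the functor $\Psi_\lambda$ of (\ref{psifun}). Since $\Psi_\lambda$ coincides with the tensor functor $\Psi$ of \S\ref{OB action} on undotted diagrams, $\Psi_\lambda(c)$ is $1\mapsto\sum_{i=1}^n v_i\otimes f_i$ and $\Psi_\lambda(d')$ is $v_a\otimes f_b\mapsto f_b(v_a)$, while $\Psi_\lambda(x^{k-1}\,\down)=\Psi_\lambda(x\,\down)^{k-1}$. Feeding $\sum_j v_j\otimes f_j$ through $\Psi_\lambda(x\,\down)^{k-1}$ and then through $\Psi_\lambda(d')$ extracts exactly the sum of the diagonal coefficients, so by the very definition of $\eta_{a,j}^{(k-1)}$ in Lemma~\ref{eta},
\[
\Psi_\lambda(\Delta_k)\;=\;\sum_{1\le a,j\le n}\eta_{a,j}^{(k-1)} .
\]

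It then remains to evaluate this double sum and recognise it as $\delta_k$. By Lemma~\ref{eta} the summand vanishes unless $a=j$ or $\col(a)<\col(j)$, and it depends only on $\col(a)$ and $\col(j)$; grouping the $n$ boxes by columns (column $c$ contributing $\lambda_c$ of them) and re-indexing the chains $\col(a)=p_0<\cdots<p_r=\col(j)$ as arbitrary strictly increasing chains in $\{1,\dots,\ell\}$ turns the sum into
\[
\Psi_\lambda(\Delta_k)\;=\;\sum_{r\ge 0}\;\sum_{1\le p_0<\cdots<p_r\le\ell} h_{k-1-r}(m_{p_0},\dots,m_{p_r})\,\lambda_{p_0}\lambda_{p_1}\cdots\lambda_{p_r},
\]
the terms $a=j$ accounting for the $r=0$ part $\sum_{p}\lambda_p m_p^{k-1}$. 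On the other side, (\ref{deltas}) gives $\sum_{k\ge 0}\delta_k u^{-k}=f'(u)/f(u)=\prod_{j=1}^\ell\bigl(1+\lambda_j\sum_{s\ge 1}m_j^{s-1}u^{-s}\bigr)$, and expanding the product (choosing for each $j$ either the constant term or one term of the series, indexed by a nonempty subset $S\subseteq\{1,\dots,\ell\}$ together with exponents $s_p\ge 1$ summing to $k$, and substituting $t_p=s_p-1$ to recognise $h_{k-|S|}((m_p)_{p\in S})$) shows the coefficient of $u^{-k}$ is precisely the displayed expression. Hence $\Psi_\lambda(\Delta_k)=\delta_k$.

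The only genuinely substantial input here is Lemma~\ref{eta}; granting that, the whole argument is the bookkeeping of the last paragraph, so the main obstacle is not conceptual but simply keeping the indices straight in the two chain-reindexings (and, to a lesser extent, pinning down the diagrammatic identity $\Delta_k=d'\circ(x^{k-1}\,\down)\circ c$).
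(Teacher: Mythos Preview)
Your argument is correct and follows the paper's proof closely: both express $\Delta_k=d'\circ(x\,\down)^{\circ(k-1)}\circ c$, invoke Lemma~\ref{eta} to obtain $\Psi_\lambda(\Delta_k)=\sum_{i,j}\eta_{i,j}^{(k-1)}$, and regroup by columns to reach the chain expression $\sum_{r\ge 0}\sum_{p_0<\cdots<p_r}h_{k-1-r}(m_{p_0},\dots,m_{p_r})\lambda_{p_0}\cdots\lambda_{p_r}$.

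The only genuine difference is in the final identification with $\delta_k$. The paper starts from the closed form (\ref{deltadef}), $\delta_k=\sum_{i+j=k}h_i(m_1,\dots,m_\ell)e_j(\lambda_1-m_1,\dots,\lambda_\ell-m_\ell)$, expands $e_j$ to extract the coefficient of each squarefree monomial $\lambda_{p_1}\cdots\lambda_{p_r}$, and then appeals to a cited symmetric-function identity to simplify $\sum_{i+j=k-r}(-1)^j h_i(m_1,\dots,m_\ell)e_j(m_{q_1},\dots,m_{q_{\ell-r}})$ to $h_{k-r}(m_{p_1},\dots,m_{p_r})$. You instead go back to the generating function (\ref{deltas}) and factor $f'(u)/f(u)=\prod_j\bigl(1+\lambda_j(u-m_j)^{-1}\bigr)$, expanding the product directly. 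Your route is a little more self-contained (no external identity is needed), at the cost of a short argument with geometric series; the paper's route has the advantage of working directly from the explicit polynomial formula (\ref{deltadef}) already recorded in the text. Either way the bookkeeping is the same and the result follows.
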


\begin{proof}
Since $\Delta_k=d'\circ (x \down)^{\circ(k-1)}\circ c$, 
it follows from Lemma~\ref{eta} that 
\begin{align}\notag
\Psi_\lambda(\Delta_k)&=
\sum_{i,j}\eta^{(k-1)}_{i,j}
=
\sum_{i}\eta^{(k-1)}_{i,i}+\sum_{\col(i)<\col(j)}\eta^{(k-1)}_{i,j}\\\notag
&
=\sum_{i}m_{\col(i)}^{k-1}
+
\sum_{r=1}^{k-1}
\sum_{\col(i)=p_0 < \cdots < p_r = \col(j)}
h_{k-1-r}(m_{p_0},\dots,m_{p_r})\lambda_{p_1} \cdots \lambda_{p_{r-1}}\\\notag
&
=\!\!\sum_{1\leq p_0\leq \ell}h_{k-1}(m_{p_0})\lambda_{p_0}+
\sum_{r=1}^{k-1}\sum_{1 \leq p_0< \cdots < p_r \leq \ell}
h_{k-1-r}(m_{p_0},\dots,m_{p_r})\lambda_{p_0} \cdots \lambda_{p_{r}}\\\notag
&
=
\sum_{r=0}^{k-1}\sum_{1 \leq p_0< \cdots < p_r \leq \ell}
h_{k-1-r}(m_{p_0},\dots,m_{p_r})\lambda_{p_0} \cdots \lambda_{p_{r}}\\\label{here}
&=
\sum_{r=1}^k\sum_{1 \leq p_1< \cdots < p_r\leq \ell}
h_{k-r}(m_{p_1},\dots,m_{p_r})\lambda_{p_1} \cdots \lambda_{p_{r}}.
\end{align}
Now we look at the formula for $\delta_k$ from (\ref{deltadef}).
Viewing it as a polynomial in indeterminates $\lambda_1,\dots,\lambda_\ell$,
it is clear that it is a linear combination of monomials of the form
$\lambda_{p_1} \cdots \lambda_{p_r}$
for $1 \leq p_1 < \cdots < p_r \leq \ell$ and $r \geq 0$.
To compute the coefficient of $\lambda_{p_1} \cdots \lambda_{p_r}$,
let $q_1 < \cdots < q_{\ell-r}$
be defined so that $\{p_1,\dots,p_r,q_1,\dots,q_{\ell-r}\} = \{1,\dots,\ell\}$.
Then it is easy to see from (\ref{deltadef}) that the
$\lambda_{p_1} \cdots \lambda_{p_r}$-coefficient of $\delta_k$
is equal to
$$
\sum_{i+j=k-r}
(-1)^{j} h_i(m_1,...,m_\ell) e_{j} (m_{q_1},\dots,m_{q_{\ell-r}}).
$$
But now, by a standard identity (e.g. see \cite[(2.4)]{Bspringer}), this is zero in case $r=0$,
while for $r > 0$ it simplifies to $h_{k-r}(m_{p_1},\dots,m_{p_r})$.
This is the same as the coefficient in (\ref{here}), so the lemma is proved.
\end{proof}

\begin{theorem}\label{repthm}
The functor $\Psi_\lambda:\AOB \rightarrow \k\operatorname{-fmod}$
factors through the quotient $\COB^f(\delta_1,\dots,\delta_\ell)$
to induce a functor
\begin{equation}
\Psi_\lambda^f(\delta_1,\dots,\delta_\ell):
\COB^f(\delta_1,\dots,\delta_\ell)
\rightarrow \k\operatorname{-fmod}.
\end{equation}
Moreover, this functor
is filtered, and the associated graded functor
fits into the following commuting diagram:
$$
\begin{CD}
\TOB^\ell(\delta)&@>\phantom{xxxl}\Phi^\ell_\lambda(\delta)\phantom{xxxl}>>&\k\operatorname{-gmod}\\
@V\Theta^f(\delta_1,\dots,\delta_\ell)VV&&@AAG A\\
\grOB^f(\delta_1,\dots,\delta_\ell)&@>>\gr \Psi^f_\lambda(\delta_1,\dots,\delta_\ell)>&
\gr (\k\operatorname{-fmod}),
\end{CD}
$$
where $G$ is the canonical functor from (\ref{Gfunc}).
\end{theorem}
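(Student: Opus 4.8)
The first assertion is that $\Psi_\lambda$ kills the defining relations of $\COB^f(\delta_1,\dots,\delta_\ell)$, namely $f(x)$ and $\Delta_i - \delta_i$ for $i=1,\dots,\ell$. The vanishing of $\Psi_\lambda(f(x)\,\ob a)$ for every $\ob a$ is exactly the $k=1$ case of Lemma~\ref{step}, since $f(u) = (u-m_1)\cdots(u-m_\ell)$. The vanishing of $\Psi_\lambda(\Delta_i \ob a - \delta_i \ob a)$ follows from Lemma~\ref{deltaacts} together with the fact that $\Delta_i$ acts on any $V(\ob a)$ as the scalar $\Psi_\lambda(\Delta_i)$ tensored into the diagram, which is a formal consequence of $\Delta_i g = \Delta_i \otimes g$ and functoriality. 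Since the morphisms of $\COB^f(\delta_1,\dots,\delta_\ell)$ are the quotients of those of $\AOB$ by the right tensor ideal generated by these elements, $\Psi_\lambda$ descends to the claimed functor $\Psi_\lambda^f(\delta_1,\dots,\delta_\ell)$.

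Next I would check that this induced functor is filtered, i.e.\ that it sends morphisms of filtered degree $\leq i$ to linear maps of filtered degree $\leq i$. It suffices to check this on the generating morphisms of $\AOB$ as a $\k$-linear category. For the dot-free generators $c,d,s,t$ this is clear since $\Psi_\lambda$ agrees with the tensor functor $\Psi$ there and those generators sit in filtered degree $0$, and one verifies directly that $\Psi(c),\Psi(d),\Psi(s),\Psi(t)$ are homogeneous of degree $0$ with respect to the gradings on $V$ and $V^*$ defined in \S\ref{thenilpotent}. For $\ob a^{(1)}\, x\, \ob a^{(2)}$, which lies in filtered degree $1$, inspect formula (\ref{x action}): the term $e_p(v^\ob a_\bd i)$ raises degree by exactly $1$; the term $m_{\col(i_p)} v^\ob a_\bd i$ preserves degree; and each term $(p,q)^\ob a_\lambda v^\ob a_\bd i$, by the definition in \S\ref{mod transp}, is a sum of basis vectors obtained by a transposition of tensor factors subject to a column-comparison constraint, which one checks strictly decreases degree (the constraint $\col(i_p)\geq\col(j_p)$ resp.\ $\col(i_p)<\col(j_p)$ is precisely what forces the degree shift to be $\leq 0$, in fact $< 0$ on the relevant summands). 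Hence $\Psi_\lambda(\ob a^{(1)}\, x\, \ob a^{(2)})$ has filtered degree $\leq 1$, so $\Psi_\lambda^f(\delta_1,\dots,\delta_\ell)$ is a filtered functor.

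Finally I would identify the associated graded functor. Taking $\gr$ of everything: on the top filtration layer, the map $e_p$ is exactly the degree-$1$ piece of $\Psi_\lambda(\ob a^{(1)}\, x\, \ob a^{(2)})$, while the $m_{\col(i_p)}$-term (degree $0$) and the $(p,q)^\ob a_\lambda$-terms (degree $<0$) all drop out upon passing to $\gr_1$. Thus $\gr \Psi_\lambda^f(\delta_1,\dots,\delta_\ell)$ sends $\ob a^{(1)}\, x\, \ob a^{(2)}$ to the class of $e_p$, and sends the dot-free generators to the classes of $\Psi(c),\Psi(d),\Psi(s),\Psi(t)$. Composing with the functor $G:\gr(\k\text{-fmod})\to\k\text{-gmod}$ of (\ref{Gfunc}) produces precisely the graded assignment $\up\mapsto V$, $\down\mapsto V^*$, $c\mapsto\eta_V$, $d\mapsto\ep_V$, $s\mapsto\sigma_{V,V}$, $x\mapsto e$ on $\GOB$, which by Corollary~\ref{GOB functor} is the functor $\Phi_\lambda$, and which factors through $\TOB^\ell(\delta)$ as $\Phi_\lambda^\ell(\delta)$. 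Tracing the factorization $\Theta^f(\delta_1,\dots,\delta_\ell):\TOB^\ell(\delta)\to\grOB^f(\delta_1,\dots,\delta_\ell)$, one sees on generators that $G\circ \gr\Psi_\lambda^f(\delta_1,\dots,\delta_\ell)\circ\Theta^f(\delta_1,\dots,\delta_\ell) = \Phi_\lambda^\ell(\delta)$, and since both sides are tensor functors out of $\TOB^\ell(\delta)$ agreeing on generators, they agree (using Theorem~\ref{GOB g-r} and the fact that the generators of $\TOB^\ell(\delta)$ are inherited from $\GOB$), giving the commuting square. The one genuinely delicate point is the degree bookkeeping in the second paragraph: verifying that every summand produced by $(p,q)^\ob a_\lambda$ strictly lowers the $\col$-grading, so that these terms contribute nothing to $\gr_1$ and the associated graded of $x$ is cleanly $e_p$ rather than $e_p$ plus lower-order junk. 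Everything else is formal consequence-chasing through the functors already constructed.
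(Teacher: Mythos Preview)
Your approach matches the paper's: factor through $\COB^f$ via Lemma~\ref{step}, then through the specialization via Lemma~\ref{deltaacts}, verify filtered degree on generators, and identify the top graded piece of (\ref{x action}) with $e_p$.

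One correction to the point you flagged as delicate: the maps $(p,q)^\ob a_\lambda$ are homogeneous of degree \emph{zero}, not strictly negative. When $\ob a_q=\up$, the underlying $(p,q)^\ob a$ simply permutes two $\up$-tensor factors, leaving the total degree unchanged; when $\ob a_q=\down$, it replaces a pair $v_{j_p}\otimes f_{j_p}$ (degree $-\col(j_p)+\col(j_p)=0$) by a sum of pairs $v_i\otimes f_i$ (also degree $0$). The column-comparison constraints select which summands appear but do not alter this degree count. This matches the paper's statement that the three terms in (\ref{x action}) have degrees one, zero, and zero respectively. Your conclusion is unaffected: a degree-zero term still vanishes in $\gr_1$, so the associated graded of $x$ is indeed $e_p$. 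Also, your appeal to ``both sides are tensor functors'' in the last step is slightly off since $\Psi_\lambda$ is not monoidal and $\grOB^f(\delta_1,\dots,\delta_\ell)$ is not a monoidal category; the paper simply checks agreement on generating morphisms, which suffices because both composites are already known to be well-defined functors.
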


\begin{proof}
The last assertion of Lemma~\ref{step} 
implies that the functor $\Psi_\lambda$ annihilates the right tensor ideal of
$\AOB$ generated by $f(x)$.
Hence it factors through the quotient
$\COB^f$ of $\AOB$ to induce a functor
\begin{equation}
\Psi_\lambda^f:\COB^f \rightarrow \k\operatorname{-fmod}.
\end{equation}
By Lemma~\ref{deltaacts} this functor maps $\Delta_k$ to $\delta_k$,
so it factors further through the specialization
$\COB^f(\delta_1,\dots,\delta_\ell)$ as desired.

To see that $\Psi^f_\lambda(\delta_1,\dots,\delta_\ell)$ is a filtered
functor, the action of $\OB$ 
is clearly homogeneous of degree zero. 
Also the three terms on the right hand side of (\ref{x action})
are graded maps of degrees one, zero, and zero, respectively.
Hence the action of $x$ is in filtered degree one.
Finally to see that the given diagram commutes, it suffices to check it on each of the generating morphisms of $\TOB^\ell(\delta)$ in turn.
This is clear for the generators of degree zero.
It just remains to observe that the degree one term on the right hand side of
(\ref{x action})
is exactly the map 
from (\ref{Lp}).
\end{proof}

\begin{corollary}\label{mainpoint}
Let $\ob a, \ob b \in \wrd$ be words whose average length is $\leq \min(\lambda)$.
Then the linear map $\Hom_{\TOB^\ell(\delta)}(\ob a, \ob b)
\rightarrow \Hom_{\grOB^f(\delta_1,\dots,\delta_\ell)}(\ob a, \ob b)$ 
induced by $\Theta^f(\delta_1,\dots,\delta_\ell)$
is an isomorphism.
\end{corollary}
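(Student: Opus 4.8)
The plan is to deduce Corollary~\ref{mainpoint} from Lemma~\ref{vustinj} together with Theorem~\ref{repthm} by a straightforward diagram chase. We already know from the discussion preceding Theorem~\ref{15prime} that the functor $\Theta^f(\delta_1,\dots,\delta_\ell):\TOB^\ell(\delta)\to\grOB^f(\delta_1,\dots,\delta_\ell)$ is full, so its effect on the Hom space $\Hom_{\TOB^\ell(\delta)}(\ob a,\ob b)$ is surjective onto $\Hom_{\grOB^f(\delta_1,\dots,\delta_\ell)}(\ob a,\ob b)$. Hence the entire content of the corollary is the \emph{injectivity} of this map for words $\ob a,\ob b$ of average length $\leq\min(\lambda)$.

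To get injectivity, I would chase around the commuting square from Theorem~\ref{repthm}. Consider the composite
$$
\Hom_{\TOB^\ell(\delta)}(\ob a,\ob b)
\xrightarrow{\;\Theta^f(\delta_1,\dots,\delta_\ell)\;}
\Hom_{\grOB^f(\delta_1,\dots,\delta_\ell)}(\ob a,\ob b)
\xrightarrow{\;\gr\Psi^f_\lambda(\delta_1,\dots,\delta_\ell)\;}
\Hom_{\gr(\k\operatorname{-fmod})}(V(\ob a),V(\ob b)).
$$
By the commuting diagram in Theorem~\ref{repthm}, this composite equals $G\circ\Phi^\ell_\lambda(\delta)$ on the relevant Hom spaces, where $G$ is the faithful functor from (\ref{Gfunc}). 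Now $\Phi^\ell_\lambda(\delta)$ restricted to $\Hom_{\TOB^\ell(\delta)}(\ob a,\ob b)$ is injective by Lemma~\ref{vustinj} (this is exactly where the hypothesis on the average length of $\ob a,\ob b$ enters), and $G$ is faithful, so $G\circ\Phi^\ell_\lambda(\delta)$ is injective on $\Hom_{\TOB^\ell(\delta)}(\ob a,\ob b)$. Since this composite factors through $\Theta^f(\delta_1,\dots,\delta_\ell)$, the first map $\Theta^f(\delta_1,\dots,\delta_\ell)$ must itself be injective on $\Hom_{\TOB^\ell(\delta)}(\ob a,\ob b)$. Combined with fullness, it is an isomorphism, which is the claim.

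There is really no serious obstacle here: the hard analytic work has already been done in Lemma~\ref{vustinj} (where one reduces to $\ob a=\ob b=\up^m$ and exhibits explicit linearly independent vectors $e_1^{k_1}\cdots e_m^{k_m}(v_{w(\bd i)}^{\ob a})$), and in Theorem~\ref{repthm} (which establishes that the diagram commutes). The only point requiring a moment's care is matching up the Hom spaces correctly: Lemma~\ref{vustinj} gives injectivity of the map into $\Hom_\k(V(\ob a),V(\ob b))$ induced by $\Phi^\ell_\lambda(\delta)$, and one must note that this is literally the same data as $G\circ\Phi^\ell_\lambda(\delta)$ applied to a homogeneous morphism of $\TOB^\ell(\delta)$ of a fixed degree, since $G$ just records the associated graded map and $\Phi^\ell_\lambda(\delta)$ already lands in $\k\operatorname{-gmod}$. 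One should also remember that everything is graded/filtered, so injectivity is checked degree by degree, but this is automatic from the way the functors respect the gradings. The upshot is a short formal argument chaining Lemma~\ref{vustinj}, the faithfulness of $G$, the commuting square of Theorem~\ref{repthm}, and the already-established fullness of $\Theta^f$.
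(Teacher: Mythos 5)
Your proposal is essentially the paper's own (one-line) argument: surjectivity from the fullness already noted in $\S$\ref{GOB}, and injectivity by chasing the commuting square of Theorem~\ref{repthm} against Lemma~\ref{vustinj}. The strategy and the key ingredients are exactly right.

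There is, however, a small slip in the direction of the functor $G$. From (\ref{Gfunc}), $G$ goes $\gr(\k\text{-fmod})\to\k\text{-gmod}$, so the expression $G\circ\Phi^\ell_\lambda(\delta)$ does not typecheck ($\Phi^\ell_\lambda(\delta)$ already lands in $\k\text{-gmod}$). What the commuting square in Theorem~\ref{repthm} actually says is
\[
\Phi^\ell_\lambda(\delta)\;=\;G\circ\gr\Psi^f_\lambda(\delta_1,\dots,\delta_\ell)\circ\Theta^f(\delta_1,\dots,\delta_\ell).
\]
With the correct orientation the argument becomes even cleaner: the right-hand composite is injective on $\Hom_{\TOB^\ell(\delta)}(\ob a,\ob b)$ because it equals $\Phi^\ell_\lambda(\delta)$, which is injective by Lemma~\ref{vustinj}; and since $\Theta^f(\delta_1,\dots,\delta_\ell)$ is the first factor in that composite, it must be injective. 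You therefore do not need the faithfulness of $G$ at all. Aside from this misstatement of the direction of $G$ (and the superfluous appeal to its faithfulness), your proof is correct and follows the same route as the paper.
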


\begin{proof}
We already observed that this map is surjective at the end of the previous section.
It is injective thanks to 
the commutative diagram from Theorem~\ref{repthm} together with Lemma~\ref{vustinj}.
\end{proof}

\begin{corollary}\label{mainpoint2}
Let $\ob a, \ob b \in \wrd$ be words whose average length is $\leq \min(\lambda)$.
Then $\Hom_{\COB^f(\delta_1,\dots,\delta_\ell)}(\ob a, \ob b)$ is a free $\k$-module
with basis arising from the equivalence classes of normally ordered dotted
oriented Brauer diagrams of type $\ob a\rightarrow \ob b$ such that there are at most $(\ell-1)$ dots on each strand.
\end{corollary}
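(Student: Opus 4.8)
The plan is to deduce the corollary from Corollary~\ref{mainpoint} by the standard device of lifting a basis of an associated graded module to a basis of the filtered module itself. Fix words $\ob a,\ob b\in\wrd$ of average length $\leq\min(\lambda)$, abbreviate $M:=\Hom_{\COB^f(\delta_1,\dots,\delta_\ell)}(\ob a,\ob b)$, and let $B$ be the set of equivalence classes of normally ordered dotted oriented Brauer diagrams (no bubbles) of type $\ob a\to\ob b$ having at most $(\ell-1)$ dots on each strand. Recall from $\S$\ref{GOB} that $M$ inherits from $\AOB$ an exhaustive filtration with $M_{\leq-1}=0$, in which a diagram with $d$ dots lies in $M_{\leq d}$; in particular the filtration is bounded below. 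So each $D\in B$ with $d$ dots has filtered degree at most $d$, and I claim $B$ is a $\k$-basis of $M$. By the usual leading-symbol argument, which I carry out below and which uses only exhaustiveness and boundedness below of the filtration, this reduces to showing that the symbols $\bar D\in\gr_d M=\Hom_{\grOB^f(\delta_1,\dots,\delta_\ell)}(\ob a,\ob b)_d$ of the diagrams $D\in B$ (with $d$ the number of dots in $D$) together form a $\k$-basis of $\gr M=\Hom_{\grOB^f(\delta_1,\dots,\delta_\ell)}(\ob a,\ob b)$.

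For this graded statement, recall that, by the construction of $\TOB^\ell(\delta)$ in $\S$\ref{GOB}, the normally ordered diagrams with no bubbles and at most $(\ell-1)$ dots on each strand form a $\k$-basis of $\Hom_{\TOB^\ell(\delta)}(\ob a,\ob b)$, indexed by the very same set $B$, and the basis element indexed by $D$ is homogeneous of degree equal to the number $d$ of dots in $D$. Since the functors $\Theta$, $\Theta^f$ and $\Theta^f(\delta_1,\dots,\delta_\ell)$ all act as the identity on the generating morphisms, one sees directly from the definitions of $\Theta$ and of the filtration that $\Theta^f(\delta_1,\dots,\delta_\ell)$ sends the $\TOB^\ell(\delta)$-basis element $D$ to the symbol $\bar D$ of $D$ regarded as a morphism in $\COB^f(\delta_1,\dots,\delta_\ell)$. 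Now Corollary~\ref{mainpoint} says precisely that $\Theta^f(\delta_1,\dots,\delta_\ell)\colon\Hom_{\TOB^\ell(\delta)}(\ob a,\ob b)\to\Hom_{\grOB^f(\delta_1,\dots,\delta_\ell)}(\ob a,\ob b)$ is an isomorphism, so it carries the basis $\{D:D\in B\}$ to a basis; hence $\{\bar D:D\in B\}$ is a $\k$-basis of $\gr M$. As a by-product, each $D\in B$ then has filtered degree in $M$ exactly equal to its number of dots.

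Finally I would record the leading-symbol argument. For linear independence, from a relation $\sum_{D\in B}c_D D=0$ in $M$ with finitely many nonzero $c_D\in\k$, let $d$ be the largest number of dots occurring among the $D$ with $c_D\neq0$; reducing modulo $M_{\leq d-1}$ gives $\sum c_D\bar D=0$ summed over the $D$ with exactly $d$ dots, so those $c_D$ vanish by the previous paragraph, and one repeats. For spanning, given $g\in M$ take the least $d$ with $g\in M_{\leq d}$ and induct on $d$, the case $d<0$ being vacuous since $M_{\leq-1}=0$: the image of $g$ in $\gr_d M$ is a $\k$-linear combination of the $\bar D$ with $d$ dots, so subtracting the corresponding combination of the $D\in B$ moves $g$ into $M_{\leq d-1}$, which lies in the $\k$-span of $B$ by the inductive hypothesis. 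Hence $M$ is free with basis $B$, as claimed. It is worth emphasizing that there is no real obstacle remaining at this stage: all of the difficulty has already been absorbed into Corollary~\ref{mainpoint} — ultimately into the injectivity in Lemma~\ref{vustinj} and the commuting square of Theorem~\ref{repthm} — and what is left is only the routine bookkeeping with the filtration just outlined.
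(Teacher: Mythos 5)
Your proof is correct and takes essentially the same approach as the paper: the paper also reduces to showing the graded morphisms give a basis of the associated graded Hom space (invoking Corollary~\ref{mainpoint} and the definition of $\TOB^\ell(\delta)$), and leaves the routine lifting from graded basis to filtered basis implicit, which you have spelled out.
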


\begin{proof}
It suffices to show that the graded morphisms defined by these diagrams
give a basis for the associated graded Hom space
$\Hom_{\operatorname{gr} \COB^f(\delta_1,\dots,\delta_\ell)}(\ob a, \ob b)$.
This follows from Corollary~\ref{mainpoint}, since the corresponding morphisms
clearly give a basis for $\Hom_{\TOB^\ell(\delta)}(\ob a, \ob b)$
by the definition of the category $\TOB^\ell(\delta)$.
\end{proof}

\subsection{\boldmath Remarks about the connection to finite $W$-algebras}\label{later}
The material in this subsection is not needed elsewhere in the article.
The goal is give a brief sketch of the origin of the functor
$\Psi_\lambda:\AOB \rightarrow \k\operatorname{-fmod}$.
We assume that $\k$ is an algebraically closed field of characteristic zero.

Let us start with some more discussion of the graded picture.
Let $\mathfrak g := \mathfrak{gl}_n(\k)$ with natural module $V$
and dual natural module $V^*$.
Define a $\Z$-grading $$
\mathfrak{g} = \bigoplus_{d \in \Z} \mathfrak{g}_d
$$
 by declaring 
that the $ij$-matrix unit $e_{i,j}$ is of degree $\col(j)-\col(i)$.
The gradings on $V$ and $V^*$ from $\S$\ref{thenilpotent}
make them into 
graded $\mathfrak{g}$-modules,
as are all of the tensor products $V(\ob a)$.
Let $e \in \mathfrak g_1$ be the nilpotent
matrix from $\S$\ref{thenilpotent}.
Its centralizer
$\mathfrak{g}^e$ is a graded subalgebra of $\mathfrak{g}$.
Let $\mathfrak{g}^e\operatorname{-gmod}$ denote 
the graded monoidal category consisting of all
graded $\mathfrak{g}^e$-modules, with morphisms defined like we did for the category
$\k\operatorname{-gmod}$ in $\S$\ref{filters}.
The graded functor $\Phi_\lambda:\GOB \rightarrow \k\operatorname{-gmod}$ 
from $\S$\ref{thenilpotent}
should really be viewed as a graded
functor $\overline{\Phi}_\lambda:\GOB \rightarrow \mathfrak{g}^e\operatorname{-gmod}$.
Then, letting $F:\mathfrak{g}^e\operatorname{-gmod}\rightarrow \k\operatorname{-gmod}$
be the obvious forgetful functor, the following diagram commutes:
\begin{equation}\label{swing1}
\xymatrix{\GOB\ar[r]^{\Phi_\lambda}\ar[d]_{\overline{\Phi}_\lambda} & \k\operatorname{-gmod}\cr
\mathfrak{g}^e\operatorname{-gmod}\ar[ur]_{F}}.
\end{equation} 
The next theorem is a reformulation 
of a result of Vust
proved in \cite[$\S$6]{KP}.

\begin{theorem}\label{full}
The functor $\overline{\Phi}_\lambda:\GOB \rightarrow
\mathfrak{g}^e\operatorname{-gmod}$ is full.
\end{theorem}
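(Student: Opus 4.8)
The plan is to obtain Theorem~\ref{full} as a translation of a theorem of Vust on the invariant theory of centralizers of nilpotent elements (see \cite[$\S$6]{KP}), so that the work is bookkeeping rather than new representation theory. Recall from $\S$\ref{thenilpotent} and Corollary~\ref{GOB functor} that $\overline{\Phi}_\lambda$ is a tensor functor sending $\up\mapsto V$, $\down\mapsto V^*$, $c$ and $d$ to the coevaluation and evaluation of the dual pair $(V,V^*)$, $s$ to the flip $\sigma_{V,V}$, and $x$ to multiplication by $e$; hence it preserves the braiding and the duality bijections (\ref{dual hom}), (\ref{dual hom 2}). Fullness means surjectivity onto each graded space $\Hom_{\mathfrak{g}^e\operatorname{-gmod}}(V(\ob a),V(\ob b))$; since these spaces are finite-dimensional and graded, this is the same as surjectivity onto the ordinary $\mathfrak{g}^e$-module Hom space, which I will write $\Hom_{\mathfrak{g}^e}(V(\ob a),V(\ob b))$. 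Let $p,q$ (resp.\ $r,s$) count the $\up$'s and $\down$'s in $\ob a$ (resp.\ $\ob b$). If $p-q\neq r-s$ there is nothing to prove: $\Hom_{\GOB}(\ob a,\ob b)=0$ since no oriented Brauer diagram of that type exists, and $\Hom_{\mathfrak{g}^e}(V(\ob a),V(\ob b))=0$ because the identity matrix $\sum_i e_{i,i}\in\mathfrak{g}^e$ acts on $V(\ob a)$ by the scalar $p-q$. Otherwise put $m:=p+s=r+q$; the two commuting squares used in the proof of Lemma~\ref{vustinj} (whose horizontal maps are built from the braiding and the duality bijections, and are therefore isomorphisms preserved by $\overline{\Phi}_\lambda$) identify surjectivity of $\overline{\Phi}_\lambda$ on $\Hom_{\GOB}(\ob a,\ob b)$ with surjectivity of $\overline{\Phi}_\lambda\colon\End_{\GOB}(\up^m)\to\End_{\mathfrak{g}^e}(V^{\otimes m})$. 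Note there is no constraint tying $m$ to $\lambda$ here, unlike in Lemma~\ref{vustinj}, because Vust's theorem is unconditional.

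It then remains to treat $\End_{\mathfrak{g}^e}(V^{\otimes m})$. Here I would invoke Vust's theorem in the form: $\End_{\mathfrak{g}^e}(V^{\otimes m})$ is spanned by the operators $e_1^{k_1}\cdots e_m^{k_m}\circ w$ over all $k_1,\dots,k_m\geq 0$ and $w\in S_m$, where $e_i$ is multiplication by $e$ in the $i$th tensor slot (the map (\ref{Lp})) and $w$ the corresponding permutation of slots; these lie in $\End_{\mathfrak{g}^e}(V^{\otimes m})$ because $e\in\mathfrak{g}^e$ and $w$ is even $\mathfrak{gl}_n$-equivariant. Passing from the connected group $G^e=\{g\in GL_n : geg^{-1}=e\}$ to its Lie algebra $\mathfrak{g}^e$ is harmless, so this is equivalent to the computation of the $G^e$-invariants of $\End(V^{\otimes m})$ under conjugation carried out in \cite[$\S$6]{KP}. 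On the other hand, writing $x_i:=\up^{i-1}x\up^{m-i}$ and $\sigma_w\in\End_{\GOB}(\up^m)$ for the crossing diagram realizing $w$, the functor $\overline{\Phi}_\lambda$ sends $x_i\mapsto e_i$ and $\sigma_w\mapsto w$, and being a tensor functor it carries the normally ordered diagram $x_1^{k_1}\cdots x_m^{k_m}\sigma_w$ to $e_1^{k_1}\cdots e_m^{k_m}\circ w$. Thus the spanning set of Vust's theorem lies in the image of $\overline{\Phi}_\lambda$, which gives the required surjectivity and finishes the proof.

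The only genuine mathematical content is Vust's theorem, which I would cite rather than reprove, so the one delicate point is making the translation faithful: after using (\ref{dual hom})--(\ref{dual hom 2}) to pass between Hom spaces of mixed tensor powers of $V$, $V^*$ and endomorphisms of $V^{\otimes m}$, one must verify that the list of elementary $G^e$-invariants in \cite[$\S$6]{KP} becomes exactly the composites of coevaluations, evaluations, flips and copies of $e$ --- i.e.\ that no invariant is missed and no generator there fails to be diagrammatic --- and that the nilpotent $e$ may be taken of the Jordan type dictated by our pyramid $\lambda$, which is automatic since every nilpotent orbit of $\mathfrak{gl}_n$ is realized by some unimodular sequence. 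This matching of generators, rather than any calculation, is where I expect the care to be needed.
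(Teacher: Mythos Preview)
Your proposal is correct and follows essentially the same route as the paper: reduce to the case $\ob a=\ob b=\up^m$ via the braiding and duality bijections (the commuting squares of Lemma~\ref{vustinj}), then invoke Vust's theorem for that case. The paper is simply terser, citing Vust via \cite[Theorem~2.4]{BK08} rather than spelling out the spanning set $e_1^{k_1}\cdots e_m^{k_m}\circ w$, and it omits your explicit identity-matrix argument for the vanishing when $p+s\neq r+q$.
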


\begin{proof}
We must show 
for all $\ob a, \ob b \in \wrd$
that it maps
$\Hom_{\GOB}(\ob a, \ob b)$ surjectively onto
$\Hom_{\mathfrak{g}^e}(V(\ob a), V(\ob b))$.
In the case that $\ob a = \ob b = \up^m$ for some $m \geq 0$,
this follows from Vust's theorem as formulated e.g. in
\cite[Theorem 2.4]{BK08}.
For the general case,
suppose that $\ob a$ involves $p$ $\up$'s and $q$ $\down$'s,
while $\ob b$ involves $r$ $\up$'s and $s$ $\down$'s.
There is nothing to prove unless
$m :=p+s=r+q$, as both Hom spaces are zero
if that is not the case.
Then we get done by the special case 
just treated using the following commuting diagram with bijective rows:
\begin{equation*}
\begin{CD}
\!\!\Hom_{\GOB}(\ob a, \ob b)&@>>>&
\Hom_{\GOB}(\down^q \up^p, \up^r \down^s)
&@>>>&
\End_{\GOB}(\up^{m})\\
@V\overline{\Phi}_\lambda VV&
&@V\overline{\Phi}_\lambda VV&
&@VV\overline{\Phi}_\lambda V\\
\!\!\Hom_{\mathfrak{g}^e}(V(\ob a), V(\ob b))&@>>>&
\Hom_{\mathfrak{g}^e}(V(\down^q \up^p),
V(\up^r \down^s))
&@>>>&
\End_{\mathfrak{g}^e}(V^{\otimes m}).
\end{CD}
\end{equation*}
This is defined in exactly the same way as the similar looking diagram
from the proof of Lemma~\ref{vustinj}.
\end{proof}

The universal enveloping algebra 
$U(\mathfrak{g}^e)$ admits a certain filtered deformation $U(\g,e)$, namely,
the {\em finite $W$-algebra} associated
to the nilpotent matrix $e$.
This algebra is naturally filtered in such a way that the associated graded algebra
$\gr U(\g,e)$ is identified with $U(\mathfrak{g}^e)$,
as is explained in detail in \cite[$\S$3.1]{BK08}.
Let $U(\g,e)\operatorname{-fmod}$ be the category of 
filtered $U(\g,e)$-modules, with morphisms defined like we did for
the category
$\k\operatorname{-fmod}$ from $\S$\ref{filters}.
Note for any $M \in U(\g,e)\operatorname{-fmod}$ that
the associated graded module $\gr M$ is naturally a graded $\mathfrak{g}^e$-module.
Hence there is a canonical faithful functor
$G:\gr (U(\g,e)\operatorname{-fmod}) \rightarrow \mathfrak{g}^e\operatorname{-gmod}$
defined in the same way as (\ref{Gfunc}).
Writing $F:U(\g,e)\operatorname{-fmod} \rightarrow \k\operatorname{-fmod}$
for the forgetful functor, the following diagram obviously commutes:
\begin{equation}\label{prettystupid}
\begin{CD}
\gr(U(\g,e)\operatorname{-fmod})&@>\gr F >>&\gr(\k\operatorname{-fmod})\\
@VG VV&&@VVG V\\
\mathfrak{g}^e\operatorname{-gmod}&@>>F>&\k\operatorname{-gmod}.
\end{CD}
\end{equation}

Now the point is that there is a filtered functor $\overline{\Psi}_\lambda:\AOB
\rightarrow U(\g,e)\operatorname{-fmod}$ making the following diagram commute:
\begin{equation}\label{swing2}
\xymatrix{\AOB\ar[r]^{\Psi_\lambda}\ar[d]_{\overline{\Psi}_\lambda} & \k\operatorname{-fmod}\cr
U(\g,e)\operatorname{-fmod}\ar[ur]_{F}}.
\end{equation} 
In other words, for each $\ob a \in \wrd$,
there is a natural filtered action of $U(\g,e)$ on 
$V(\ob a)$ (which is $\Psi_\lambda(\ob a)$), and the functor $\Psi_\lambda$ takes morphisms
in $\AOB$ to $U(\g,e)$-module homomorphisms.
Without going into details, 
this arises 
by using an analog of \cite[(3.9)]{BK08} to identify $V(\ob a)$ with a
naturally occurring $U(\g,e)$-module.
This $U(\g,e)$-module is the image
under the so-called
Skryabin equivalence of a certain infinite dimensional
generalized Whittaker module for $\g$ 
(depending on the scalars $m_1,\dots,m_\ell$)
tensored 
with the finite dimensional $\g$-module $V(\ob a)$.
Then the actions of $c, d, s$ and $x$
arise naturally by
pushing the endomorphisms 
defined by (\ref{F}) through Skryabin's equivalence. 
We used this point of view
to discover the functor $\Psi_\lambda$ in the first place;
the formula (\ref{x action}) 
was computed
by mimicking the proof of \cite[Lemma 3.3]{BK08}.

\begin{remark}
Using Theorems~\ref{12prime} and \ref{repthm} 
plus the definition of the functor $\overline{\Psi}_\lambda$,
one can show that (\ref{swing2}) is a filtered deformation of 
(\ref{swing1}).
Formally, this means that there is a commuting triangular prism of functors with top face being obtained from (\ref{swing2}) by applying $\gr$,
bottom face being (\ref{swing1}), one of the side faces being (\ref{prettystupid}),
and the remaining vertical edge being the isomorphism
$\Theta$ from Theorem~\ref{12prime}.
Combined with Theorem~\ref{full},
one can deduce from this
that the functor $\overline{\Psi}_\lambda$ is full too.
This assertion generalizes the second equality 
from \cite[Theorem 3.7]{BK08}.
\end{remark}

\section{Main results}\label{b-t}  

\subsection{\boldmath Proof of Theorem~\ref{thm3} assuming $\k$ is an algebraically closed 
field of characteristic zero}
Let $\k$ be an algebraically closed field of characteristic zero
and $f(u) \in \k[u]$ be an arbitrary monic polynomial of degree $\ell$.
We can factor
$f(u) = (u-m_1)\cdots (u-m_\ell)$ for $m_1,\dots,m_\ell \in \k$.
For $\ob a, \ob b \in \wrd$,
let $D^\ell(\ob a, \ob b)$ be a set of representatives for the
equivalence classes of normally ordered dotted oriented Brauer diagrams
of type $\ob a \rightarrow \ob b$ 
such that there are at most $(\ell-1)$ dots on each strand.
View $\Hom_{\COB^f}(\ob a, \ob b)$
as a module over the 
polynomial algebra 
$\k[t_1,\dots,t_\ell]$
so that $t_i$ acts on a morphism
by tensoring on the left with the clockwise dotted bubble $\Delta_i$ 
with $(i-1)$ dots.
We have observed already that 
the morphisms arising from the diagrams in $D^\ell(\ob a, \ob b)$ 
span
$\Hom_{\COB^f}(\ob a, \ob b)$ as a $\k[t_1,\dots,t_\ell]$-module.
To complete the proof of Theorem~\ref{thm3}
we need to show that these morphisms 
are linearly independent over $\k[t_1,\dots,t_\ell]$.
Take a $\k[t_1,\dots,t_\ell]$-linear relation
$$
\sum_{g \in D^\ell(\ob a, \ob b)}
p_g(t_1,\dots,t_\ell) g = 0
$$
in $\Hom_{\COB^f}(\ob a, \ob b)$.
We need to show that all of the polynomials
$p_g(t_1,\dots,t_\ell)$ are identically zero.

To see this, suppose that $\lambda = (\lambda_1,\dots,\lambda_\ell)$
is any unimodular sequence 
such that all of its parts are greater than or equal to
the average length of the words $\ob a$ and $\ob b$.
Define $\delta=\delta_1,\dots,\delta_\ell \in \k$ according to (\ref{deltadef}).
Specializing each $\Delta_k$ at $\delta_k$, we deduce that
$$
\sum_{g \in D^\ell(\ob a, \ob b)}
p_g(\delta_1,\dots,\delta_\ell) g = 0
$$
in 
$\Hom_{\COB^f(\delta_1,\dots,\delta_\ell)}(\ob a, \ob b)$.
Hence by Corollary~\ref{mainpoint2}
we have that 
$p_g(\delta_1,\dots,\delta_\ell) = 0$ for all $g$.
It just remains to apply the following lemma.

\begin{lemma}
As $(\lambda_1,\dots,\lambda_\ell)$ varies over all unimodular sequences of length $\ell$
having all parts greater than or equal to
the average length $r$ of the words $\ob a$ and $\ob b$,
the set of points $(\delta_1,\dots,\delta_\ell)$ 
defined by the equation (\ref{deltadef})
is Zariski dense in $\k^\ell$.
\end{lemma}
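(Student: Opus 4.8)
The plan is to realise the assignment $(\lambda_1,\dots,\lambda_\ell) \mapsto (\delta_1,\dots,\delta_\ell)$ as a composite of affine automorphisms of $\k^\ell$ with the elementary symmetric coordinate map, and then to use that a dominant morphism carries Zariski-dense subsets to Zariski-dense subsets, reducing the statement to the (elementary) density of a suitable family of integer points.

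First I would unwind (\ref{deltadef}). Keeping $m_1,\dots,m_\ell$ fixed, put $y_i := \lambda_i - m_i$. For $1 \leq k \leq \ell$ the identity (\ref{deltadef}) reads
\begin{align*}
\delta_k &= \sum_{i+j=k} h_i(m_1,\dots,m_\ell)\, e_j(y_1,\dots,y_\ell)\\
&= e_k(y_1,\dots,y_\ell) + \sum_{j=0}^{k-1} h_{k-j}(m_1,\dots,m_\ell)\, e_j(y_1,\dots,y_\ell),
\end{align*}
using $h_0 = e_0 = 1$. Hence the vector $(\delta_1,\dots,\delta_\ell)$ is obtained from $(e_1(y_1,\dots,y_\ell),\dots,e_\ell(y_1,\dots,y_\ell))$ by an invertible affine transformation of $\k^\ell$ (lower-unitriangular linear part, constant term coming from the $e_0$-contributions $h_k(m_1,\dots,m_\ell)$). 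Since $\lambda \mapsto (y_1,\dots,y_\ell) = \lambda - (m_1,\dots,m_\ell)$ is also an affine automorphism of $\k^\ell$, the map $\lambda \mapsto (\delta_1,\dots,\delta_\ell)$ factors as $A \circ E \circ B$, where $A, B$ are affine automorphisms of $\k^\ell$ and $E\colon \k^\ell \to \k^\ell$ is $(y_1,\dots,y_\ell) \mapsto (e_1(y_1,\dots,y_\ell),\dots,e_\ell(y_1,\dots,y_\ell))$. The morphism $E$ is surjective, since over the algebraically closed field $\k$ every monic polynomial of degree $\ell$ splits into linear factors; in particular $E$ is dominant, so $E^\ast$ is injective and therefore $E$ maps every Zariski-dense subset $T \subseteq \k^\ell$ to a Zariski-dense subset (any nonzero polynomial vanishing on $E(T)$ would pull back to a nonzero polynomial vanishing on $T$). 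As affine automorphisms are Zariski homeomorphisms, it now suffices to prove that the set $S$ of unimodular sequences $(\lambda_1,\dots,\lambda_\ell)$ with all parts $\geq r$ is Zariski dense in $\k^\ell$.

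For this last point I would restrict to the subfamily of weakly increasing integer sequences $\max(r,1) \leq \lambda_1 \leq \lambda_2 \leq \cdots \leq \lambda_\ell$, which are all unimodular (take $k = \ell$ in the definition) and have all parts $\geq r$. The substitution $\mu_1 := \lambda_1 - \max(r,1)$ and $\mu_i := \lambda_i - \lambda_{i-1}$ for $2 \leq i \leq \ell$ is an affine bijection from this subfamily onto $\Z_{\geq 0}^\ell$. Since $\k$ is an infinite integral domain, a polynomial in $\ell$ variables vanishing on all of $\Z_{\geq 0}^\ell$ is identically zero (an immediate induction on $\ell$); hence $\Z_{\geq 0}^\ell$, and therefore its image under the inverse affine map, is Zariski dense in $\k^\ell$, so $S$ is Zariski dense. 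Combining the three steps proves the lemma. There is no serious obstacle in this argument: it is a chain of routine reductions, and the only point using a genuine hypothesis is that the elementary symmetric map $E$ carries dense sets to dense sets, which relies on $E$ being dominant and hence on $\k$ being algebraically closed.
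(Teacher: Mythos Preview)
Your proof is correct and follows the same overall architecture as the paper's: first argue that the set $S$ of admissible unimodular sequences is Zariski dense in $\k^\ell$, then show that the morphism $(\lambda_1,\dots,\lambda_\ell)\mapsto(\delta_1,\dots,\delta_\ell)$ is dominant, and conclude that dominant morphisms send dense sets to dense sets. The paper states the density of $S$ without further comment, whereas you supply the explicit reduction to $\Z_{\geq 0}^\ell$ via an affine change of coordinates; this is a harmless elaboration (modulo replacing $\max(r,1)$ by an integer lower bound such as $\lceil r\rceil$, since $r$ may be a half-integer).

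The genuine difference is in how dominance is established. The paper computes the Jacobian: it observes that the degree-$i$ homogeneous top term of $\delta_i$ is $e_i(\lambda_1,\dots,\lambda_\ell)$, so the top term of $\det J$ is the Vandermonde-type determinant $\det\big(e_{i-1}(\lambda_1,\dots,\widehat{\lambda_j},\dots,\lambda_\ell)\big)_{i,j}=\prod_{i<j}(\lambda_i-\lambda_j)\neq 0$. You instead factor the map as $A\circ E\circ B$ with $A,B$ affine automorphisms and $E$ the elementary symmetric coordinate map, then invoke surjectivity of $E$ over an algebraically closed field. Your route is more conceptual and avoids any determinant calculation; the paper's Jacobian argument has the mild advantage that it does not require algebraic closure (nonvanishing of the generic Jacobian gives dominance over any infinite field), though in context both are being applied only when $\k$ is algebraically closed of characteristic zero, so this distinction is moot.
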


\begin{proof}
The set of unimodular sequences of length $\ell$ 
having all parts greater than or equal to $r$
is Zariski dense in $\k^\ell$.
Hence it suffices to show that the morphism
$$
\k^\ell \rightarrow \k^\ell,\qquad
(\lambda_1,\dots,\lambda_\ell) \mapsto (\delta_1,\dots,\delta_\ell)
$$
defined by (\ref{deltadef}) is dominant.
To prove this, we just need to check that the determinant of the Jacobian matrix
$J = \big(\frac{\partial \delta_i}{\partial \lambda_j}\big)_{1 \leq i,j \leq \ell}$
is non-zero.
Each $\delta_i$ is a polynomial of degree $i$ in $\k[\lambda_1,\dots,\lambda_\ell]$,
and the homogeneous component of $\delta_i$ of this top degree
is equal to $e_i(\lambda_1,\dots,\lambda_\ell)$.
Hence each $\frac{\partial \delta_i}{\partial \lambda_j}$
is a polynomial of degree $(i-1)$ with top homogeneous component
$e_{i-1}(\lambda_1,\dots,\widehat{\lambda_j},\dots,\lambda_\ell)$.
We deduce that the top homogeneous component of $\det J$ is
equal to the determinant of the matrix 
$\big(e_{i-1}(\lambda_1,\dots,\widehat{\lambda_j},\dots,\lambda_\ell)\big)_{1\leq i,j \leq \ell}$.
This is a variation on Vandermonde, equal to $\prod_{1 \leq i<j \leq \ell}(\lambda_i-\lambda_j)$,
which is non-zero.
\end{proof}

\subsection{\boldmath Proof of Theorem~\ref{thm3} assuming $\k$ is an integral domain of 
characteristic zero}
Let $\k$ be a domain of characteristic zero and $\K$ be the algebraic closure
of its field of fractions.
Let $f(u) \in \k[u] \subseteq \K[u]$ be a monic polynomial of degree $\ell$.
To prove Theorem~\ref{thm3} in this case, we just need to compare
the category $\OB_\k^f$ defined over the ground ring $\k$
to the category $\OB_\K^f$ defined over the algebraically closed field $\K$.
There is an obvious $\k$-linear functor
$\COB^f_\k \rightarrow \COB^f_\K$ mapping generators to generators.
It sends the morphisms in $\Hom_{\COB^f_\k}(\ob a, \ob b)$ 
defined by the equivalence classes of diagrams
from the statement of Theorem~\ref{thm3} to the analogous
morphisms in $\Hom_{\COB^f_\K}(\ob a, \ob b)$.
The latter are known already to be $\K$-linearly independent
by the previous subsection.
Hence the given morphisms in $\Hom_{\COB^f_\k}(\ob a, \ob b)$
are $\k$-linearly independent, and they obviously span. This completes the proof.

\subsection{\boldmath Proof of Theorem~\ref{thm3} in general}
Finally we let $\k$ be an integral domain of positive characteristic
and $f(u) \in \k[u]$ be a monic polynomial of degree $\ell$.
By some general nonsense (e.g. see \cite[$\S$3.1]{Kap}), there exists 
an integral domain $\mathcal O$ of characteristic zero and a maximal ideal $\mathfrak{m} \lhd \mathcal O$ so that 
$\k$ embeds into the field $\K := \mathcal O / \mathfrak m$. 
Let $\hat f(u) \in \mathcal O[u]$ be a monic polynomial
whose image in $\K[u]$ is equal to the image of $f(u)$.
By the previous subsection, we have proved Theorem~\ref{thm3}
already for the category $\COB^{\hat f}_{\mathcal O}$.
Base change gives us a category
$\K \otimes_{\mathcal O} \COB^{\hat f}_{\mathcal O}$
in which the images of the morphisms from the statement of Theorem~\ref{thm3}
are $\K$-linearly independent.
Considering the obvious functor
$\COB^f_{\k} \rightarrow 
\K \otimes_{\mathcal O} \COB^{\hat f}_{\mathcal O}$, we deduce that the corresponding morphisms in $\COB^f_{\k}$
are $\k$-linearly independent,
as required to complete the proof of Theorem~\ref{thm3} in general.
As discussed in $\S$\ref{GOB}, Theorem~\ref{15prime} follows too,
as does the fact that the functor (\ref{Actual}) is an isomorphism in all cases.

\subsection{\boldmath Proof of Theorem~\ref{thm2}}
We just need to show that the morphisms from the statement of Theorem~\ref{thm2} are 
linearly independent.
Suppose for a contradiction 
that we are given some non-trivial linear relation between some of these
morphisms. Choose $\ell$ so that each strand of each of the diagrams 
involved in this linear relation has less than $\ell$ dots on it.
Then pick any monic $f(u) \in \k[u]$ of degree $\ell$ and
apply the quotient functor
$\AOB \rightarrow \COB^f$ to the given relation. 
The result is a non-trivial linear relation between
morphisms in $\COB^f$ which are already known by Theorem~\ref{thm3}
to be linearly independent.
This contradiction completes the proof of Theorem~\ref{thm2}.
Theorem~\ref{12prime} also follows.

\subsection{Identification with the algebras of Rui and Su}
We have now proved all of the main results stated in the introduction.
To conclude the article, we explain in more detail how to see that
the affine and cyclotomic walled Brauer algebras 
$AB_{r,s}(\delta_1,\delta_2,\dots)$ 
and $B_{r,s}^f(\delta_1,\dots,\delta_\ell)$
from (\ref{AB})--(\ref{Brs})
are isomorphic to the algebras with the same names
defined in \cite{RS, RS2}.

Consider first the affine walled Brauer algebra
$AB_{r,s}(\delta_1,\delta_2,\dots)$.
Let $B_{r,s}^{\operatorname{aff}}$
be the algebra from \cite[Definition 2.7]{RS}
over the ground ring $\k$
taking the parameters $\omega_0$ 
and $\omega_1$ there to be $\delta_1$ and $-\delta_2$, respectively.
This means that
$B_{r,s}^{\operatorname{aff}}$
is defined by
generators 
$e_1,x_1,\bar x_1,
s_i\:(i=1,\dots,r-1),
\bar s_j\:(j=1,\dots,s-1), 
\omega_k\:(k \geq 2)$ and $\bar \omega_k\:(k \geq 0)$,
subject to the relations that all
$\omega_k$ and $\bar \omega_k$ are central plus twenty-six more.
It is an exercise in checking relations to see that there is a 
well-defined homomorphism
$B^{\operatorname{aff}}_{r,s} 
\rightarrow AB_{r,s}(\delta_1,\delta_2,\dots)$
defined by
$$\begin{array}{ll}
\includegraphics{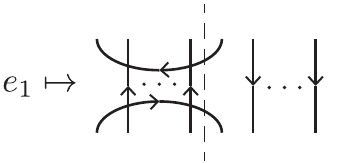}\\
\includegraphics{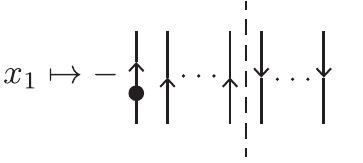} & \includegraphics{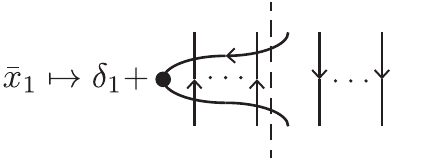}\\
\includegraphics{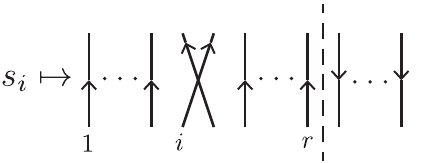} & \includegraphics{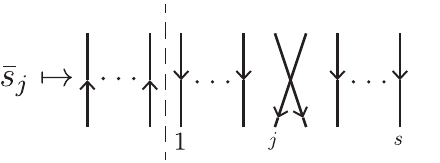}\\
\omega_k \mapsto (-1)^k \delta_{k+1} & \bar \omega_k \mapsto\sum_{l=0}^k\binom{k}{l} \delta_1^l \delta_{k+1-l}'\\[4pt]
& \text{where $\delta_j'$ is  defined from the}\\
& \text{identity (\ref{delta id}).} 
\end{array}$$
This homomorphism factors through the quotient
$\widehat{B}_{r,s}$ of
${B}^{\operatorname{aff}}_{r,s}$ 
by the additional relations 
$\omega_k = (-1)^k \delta_{k+1}$ for each $k$,
which is precisely the specialized algebra appearing in 
\cite[Theorem 4.15]{RS}.
Then one checks easily using our basis from
Theorem~\ref{thm2}
that the spanning set for $\widehat{B}_{r,s}$
defined in \cite[Theorem 4.15]{RS} maps to a basis for $AB_{r,s}(\delta_1,\delta_2,\dots)$. Hence $\widehat{B}_{r,s}
\cong AB_{r,s}(\delta_1,\delta_2,\dots)$. (This also
gives another proof of the linear independence in \cite[Theorem 4.15]{RS}.)

Finally we discuss the cyclotomic walled Brauer algebra.
Let 
$f(u) = \sum_{i=0}^\ell a_i u^{\ell-i}$ be
monic of degree $\ell$ as usual
and $\omega_0,\dots,\omega_{\ell-1} \in \k$
be some given scalars.
For $k \geq \ell$ define $\omega_k$ recursively from the equation
$\omega_k = -(a_1 \omega_{k-1} + \cdots + a_{\ell} \omega_{k-\ell}).$
Also let $\tilde f(u) := (-1)^{\ell} f(-u)$ and $\delta_{k+1}  := (-1)^k
\omega_k$ for each $k \geq 0$.
In \cite[Definition 2.1]{RS2},
Rui and Su define
their cyclotomic walled Brauer algebra
$B_{\ell,r,s}$ to be the quotient of the algebra $\widehat{B}_{r,s}$
from the previous paragraph by the additional relations that $f(x_1)
= g(\bar x_1) = 0$,
where $g(u)$ is another  monic polynomial defined explicitly via the
identity \cite[(2.6)]{RS2}.
Using Remark~\ref{rem3}, one can check that the composition of the isomorphism 
$\widehat{B}_{r,s} \stackrel{\sim}{\rightarrow}
AB_{r,s}(\delta_1,\delta_2,\dots)$ from the previous paragraph
with the natural 
quotient map
$AB_{r,s}(\delta_1,\delta_2,\dots) \twoheadrightarrow
B_{r,s}^{\tilde f}(\delta_1,\dots,\delta_\ell)$
sends both $f(x_1)$ and $g(\bar x_1)$ to zero.
Hence it
factors through $B_{\ell,r,s}$ to induce 
a surjection $B_{\ell,r,s} \twoheadrightarrow
B_{r,s}^{\tilde f}(\delta_1,\dots,\delta_\ell)$.
On comparing the spanning set for $B_{\ell,r,s}$
derived in the proof of \cite[Theorem 2.12]{RS2}
with our basis for $B_{r,s}^{\tilde f}(\delta_1,\dots,\delta_\ell)$
arising from Theorem~\ref{thm3}, it follows that this surjection is actually an isomorphism. Hence our cyclotomic walled Brauer algebra is the same as the one in
\cite{RS2}.

\renewcommand{\bibname}{\textsc{references}} 
\bibliographystyle{alphanum}	
\bibliography{references}	

\end{document}